\def\cl@chapter{\@elt {theorem}}
\def\C{\mathbb{C}}
\def\E{\mathbb{E}}
\def\N{\mathbb{N}}
\def\P{\mathbb{P}}
\def\R{\mathbb{R}}
\def\Z{\mathbb{Z}}
\def\1{\mathds{1}}
\def\Var{\text{Var}}
\def\Ind{\mathds{1}}
\def\Cov{\text{Cov}}
\def\f{f_Y^{(\Delta)}}
\def\tr{\operatorname{tr}}
\def\pa2{\frac{\partial ^2}{\partial \vartheta ^2}}
\newcommand\D{^{(\Delta)}}
\def\vecc{\operatorname{vec}}
\newtheorem{theorem1}{Theorem}
\newtheorem*{assumptionletterA}{{\textbf{Assumption A}}}
\newtheorem*{assumptionletterB}{{\textbf{Assumption B}}}
\newtheorem*{assumptionletterC}{{\textbf{Assumption $\widetilde{\text{A}}$}}}
\newtheorem*{assumptionletterD}{{\textbf{Assumption $\widetilde{\text{B}}$}}}
\begin{document}

\title{Whittle estimation for stationary state space models with finite second moments 
}


\author{Vicky Fasen-Hartmann      \and
        Celeste Mayer 
}


\institute{Vicky Fasen-Hartmann \at
             Institute of Stochastics, Englerstra{\ss}e 2,
             D-76131 Karlsruhe, Germany \\
              \email{vicky.fasen@kit.edu}           
           \and
           Celeste Mayer \at
             Institute of Stochastics, Englerstra{\ss}e 2,
             D-76131 Karlsruhe, Germany.\\
\email{celeste.mayer@kit.edu}             \\}


\maketitle

\begin{abstract}
In this paper, we consider the Whittle estimator for the parameters of a stationary solution of a continuous-time linear state space model
sampled at low frequencies. In our context the driving process is a L\'evy process which allows flexible margins of the underlying model.
The L\'evy process is supposed to have finite second moments. It is well known that then the class of stationary solutions
of linear state space models and the class of multivariate CARMA processes coincides.
We prove that the Whittle estimator,
which is based on the periodogram,
is strongly consistent and asymptotically normally distributed.
A comparison with the classical setting of discrete-time ARMA models shows that in the continuous-time
setting the limit covariance matrix of the Whittle estimator has an additional correction term for non-Gaussian models.
For the proof, we investigate as well the asymptotic normality of the integrated periodogram which is interesting for its own.
It can be used to construct goodness of fit tests.
Furthermore, for univariate state space processes, which are  CARMA processes, we introduce an adjusted version of the Whittle estimator
and derive as well the asymptotic properties of this estimator.
The practical applicability of our estimators is demonstrated through a simulation study. 
\keywords{asymptotic normality, CARMA process, consistency, identifiability,  periodogram,
	quasi-maximum-likelihood estimator, state space model, Whittle estimator}
\end{abstract}

\section{Introduction}
 Continuous-time linear state space models
are widely used in diversified fields as, e.g., in signal processing and
control, high-frequency financial econometrics and financial mathematics. The advantages of continuous-time models
are that they allow to model high-frequency data as in finance and in turbulence but as well irregularly spaced data, missing observations
or situations when estimation and inference at various frequencies has to be carried out.

In this paper, we investigate stationary solutions of continuous-time  linear state space models driven by a Lévy process. 	
A one-sided $d$-dimensional Lévy process $(L_t)_{t \geq 0}$ is a stochastic process with stationary and independent increments satisfying $L_0=0$ almost surely
and having continuous in probability sample paths.
For matrices $A\in \R^{N\times N}$, $B\in \R^{N\times d}$, $C\in \R^{m\times N}$ and an
$d$-dimensional centered Lévy process $L=(L_t)_{t\geq 0}$ a continuous-time linear state space model $(A,B,C,L)$ is defined by
\begin{align}
\label{statespace}
\begin{array}{rcl}
dX_t&=& AX_tdt+ BdL_t, \\
Y_t&=& CX_t,   \quad t\geq 0.
\end{array}
\end{align}
The processes $(X_t)_{t\geq 0}$ and $(Y_t)_{t\geq 0}$ in the state space representation
\eqref{statespace} are called state- and output process, respectively.

In the case of a finite second moment of the driving Lévy process the classes of stationary linear state space models and multivariate
continuous-time ARMA (MCARMA) models are equivalent (see \cite{Schlemm:Stelzer:2012}, Corollary 3.4).
This means that for every output process $(Y_t)_{t\geq 0}$ of the state space model \eqref{statespace} there exist
an autoregressive polynomial
$    {P}(z):= I_{d}z^p+P_1z^{p-1}+\ldots+P_{p-1}z+P_p$
with $P_1,\ldots, P_p\in \R^{d\times d}$ and a moving average polynomial
${Q}(z):=Q_0z^q+ Q_1z^{q-1}+\ldots+ Q_{q-1}z+Q_q$
with $Q_0,\ldots,Q_q\in \R^{d\times m}$ such that $(Y_t)_{t\geq 0}$ can be interpreted
as  solution of the differential equation
\begin{eqnarray} \label{MCARMA}
{P}(\mathsf{D})Y_t={Q}(\mathsf{D})\mathsf{D} L_t, \quad  t\geq 0,
\end{eqnarray}
where $\mathsf{D}$ is the differential operator with respect to $t$. Since the orders of the autoregressive  polynomial and the moving average polynomial
are $p$ and $q$, $Y$ is called MCARMA$(p,q)$ process.
Formally,  MCARMA processes were introduced as linear state space models with special  matrices $A, B, C,$ see \cite{Marquardt:Stelzer:2007}. Since the parametrization of a general linear state space model \eqref{statespace} is more flexible than the parametrization of an MCARMA model \eqref{MCARMA}, it is advantageous to use \eqref{statespace} and estimate the parameters within this representation.

The defining differential equation \eqref{MCARMA} of an MCARMA process reminds of the defining difference equation of a discrete-time vector ARMA (VARMA) process. A
VARMA process $(Z_n)_{n\in\N}$ is the $d$-dimensional solution of a difference equation of the form
\begin{eqnarray} \label{VARMA}
{P}(\mathsf{B})Z_n={Q}(\mathsf{B}) e_n, \quad n\in\N,
\end{eqnarray}
where $\mathsf{B}$ is the Backshift-operator $\mathsf{B}Z_n=Z_{n-1}$ and $(e_n)_{n\in\Z}$ is an $m$-dimensional white noise, see, e.g., the monographs
of \cite{BrockwellDavis} and \cite{Lutke}. From \cite{Thornton:Chambers:2017}, see  \cite{Brockwell:Lindner:2009} for the univariate case,
it is well  known that a discretely sampled MCARMA process  admits a VARMA
representation with a weak white noise $(e_n)_{n\in\Z}$. The covariance
matrix of $e_n$ depends on the parameters of the polynomial $P$ and $Q$ in the MCARMA
representation, respectively on the parameters of $(A,B,C)$ in the state space model \eqref{statespace}. For Lévy driven models the white noise of the sampled process is whether
a strong white noise nor a martingale difference in general.
Since the results concerning the asymptotic behavior of the quasi maximum likelihood estimator and the Whittle estimator for VARMA models require the white noise to be a martingale difference, see  \cite{DunsmuirHannan76},  \cite{deistler_dunsmuir_hannan_1978}, \cite{Dahlhaus:Poetscher:1989}, they are not transferable to non-Gaussian Lévy driven state space models.

In the econometric literature there are several papers using the
Kalman filter approach
for maximum likelihood estimation
of Gaussian possibly non-stationary MCARMA processes as, e.g.,
\cite{Harvey:Stock:1985,Harvey:Stock:1988,HarveyStock1989}, \cite{Zadrozny1988}, \cite{Thornton:Chambers:2017}.
The rigorous
mathematical derivation of the asymptotic properties of the quasi-maximum
likelihood estimator for stationary Lévy driven state space and MCARMA models was given
recently in \cite{QMLE} 
and for non-stationary  models in \cite{FasenScholz:2019}. In the case of univariate MCARMA processes with $d=m=1$, which are called CARMA processes,
there exist some further estimation methods.
An indirect estimation procedure for CARMA models, which is robust against outliers, is topic of
\cite{Fasen:Kimmig:2020}. To the best of our knowledge \cite{Fasen:Fuchs:2013b} present the only frequency domain estimator for high-frequency sampled CARMA processes.

In this paper, we investigate a frequency domain estimator, the Whittle estimator, for a low-frequency sampled state space model \eqref{statespace} with stationary observations $Y_{\Delta},\ldots,Y_{n\Delta}$
($\Delta>0$ fixed). 
The Whittle  estimator is going back to \cite{Whittle:1951,whittle1953estimation}, \cite{Walker:1964} and is very well investigated for different time series models
in discrete time.
If the autocovariance function of  $Y^{(\Delta)}:=(Y_{k\Delta})_{k\in\N_0}$ is denoted by
$\Gamma_Y^{(\Delta)}(h)=\operatorname{Cov}(Y_{(h+1)\Delta},Y_{\Delta})$ and $\Gamma_Y^{(\Delta)}(-h)=\Gamma_Y^{(\Delta)}(h)^\top$,
$h\in\N_0$, the spectral density $f_Y^{(\Delta)}$ of $Y^{(\Delta)}$ is defined as Fourier transform of the autocovariance function
\beam \label{spek}
f_Y^{(\Delta)}(\omega)=\frac{1}{2\pi}\sum_{h \in \Z}\Gamma_Y^{(\Delta)}(h)e^{-ih\omega}, \quad \omega \in [-\pi,\pi].
\eeam
Conversely, using the inverse Fourier transform, yields
\begin{align}\label{kovarianz}
\Gamma_Y^{(\Delta)}(h)=\int_{-\pi}^{\pi}f_Y^{(\Delta)}(\omega)e^{ih\omega}d\omega, \quad   h\in\Z.
\end{align}
The empirical version of the spectral density is
the periodogram $I_n:[-\pi,\pi]\rightarrow \R^{m\times m}$ defined as
\begin{align}\label{Per_ACVF}
I_n(\omega)=\frac{1}{2\pi n}\left(\sum_{j=1}^{n}Y_{j\Delta}e^{-ij\omega}\right)\left(\sum_{k=1}^{n}Y_{k\Delta}e^{ik\omega}\right)^\top
=\frac{1}{2\pi}\sum_{h=-n+1}^{n-1}\overline{\Gamma}_n(h)e^{-ih\omega}, \quad  \omega \in [-\pi, \pi],
\end{align}
where
$$\overline{\Gamma}_n(h):=\frac1n\sum_{k=1}^{n-h}Y_{(k+h)\Delta}Y_{k\Delta}^\top \quad \mbox{ and } \quad \overline{\Gamma}_n(-h):=\overline{\Gamma}_n(h)^\top, \quad h=0,\ldots,n,$$
is the empirical autocovariance function.
For different frequencies
the periodogram behaves asymptotically like independent exponentially distributed random
variables, 
see \cite{fasen2013a}, and is not a consistent estimator for the spectral density. However, the periodogram is the basic part of the Whittle estimator.

Let $\Theta\subseteq \R^r$ be a parameter space and for any $\vartheta\in\Theta$ let $f_Y^{(\Delta)}(\omega, \vartheta)$
be the spectral density of a stationary equidistant sampled state space process $Y^{(\Delta)}(\vartheta)$.
Then, the Whittle function $W_n$ is defined by
$$W_n(\vartheta)=\frac{1}{2n}\sum_{j=-n+1}^{n}\Big[\tr\left(f_Y^{(\Delta)}(\omega_j, \vartheta)^{-1}I_n(\omega_j)\right)+\log\left(\det \left(f_Y^{(\Delta)}(\omega_j,\vartheta)\right)\right)\Big], \quad  \vartheta\in\Theta,$$
with
$\omega_j=\frac{\pi j}{n}$ for $ j=-n+1,\ldots,n$
and the  Whittle estimator is
$$\widehat{\vartheta}_n^{(\Delta)}:= \arg \min_{\vartheta\in \Theta} W_n(\vartheta).$$
In the definition of the Whittle function it is also possible to replace the term
$\log(\det (f_Y^{(\Delta)}(\omega_j,\vartheta)))$  by $\log(\det V^{(\Delta)}(\vartheta))$ where $V^{(\Delta)}(\vartheta)$ is the covariance
matrix of the one-step linear prediction error.
Therefore, if the covariance matrix $ V^{(\Delta)}(\vartheta)$ of the linear prediction error does not depend on $\vartheta$, we can neglect the penalty term   $\log(\det V^{(\Delta)}(\vartheta))$ completely since it is constant for all
$\vartheta$.
However, in the case of state space models, $V^{(\Delta)}(\vartheta)$ depends on $\vartheta$ and
has to be computed additionally (cf. \Cref{invertible}). Conversely, for VARMA models,
$V^{(\Delta)}(\vartheta)$ is the covariance matrix of the
white noise.   Hence, the Whittle function for VARMA models with penalty function $\log(\det V^{(\Delta)}(\vartheta))$
in \cite{DunsmuirHannan76} differs from our Whittle function. That paper is also one of the few papers using the
Whittle estimator for the estimation of a multivariate model.


Empirical spectral processes indexed by a class of functions are applied to derive the asymptotic properties
of frequency domain estimators  as the  Whittle estimator.
The asymptotic behavior of empirical spectral processes is very well investigated
but unfortunately the known results  cannot be utilized to our setting. 
The empirical spectral process theory usually requires some exponential inequality and therefore some stronger model assumptions are necessary.
For example, \cite{Mikosch:Norvaisa} investigate empirical spectral processes for linear models with i.i.d. (independent and identically distributed) noise having finite fourth moments; similarly
\cite{dahlhaus2009empirical}. 
\cite{dahlhaus1988empirical} assumes some exponential moment condition for the stationary time series model
and  \cite{Dahlhaus:Polonik:2006} study Gaussian locally stationary processes. The recent paper of  \cite{Bardet:Doukhan:Leon}
assumes some weak dependence on the stationary time series and that the one-step linear prediction error variance, which corresponds
to the variance of the white noise in the ARMA representation of the discrete sampled process, does not depend
on the model parameters. However, in our case, the parameters of $(A,B,C)$ affect this variance.
Whittle estimation for continuous-time fractionally integrated CAR processes, where the driving process is a
fractionally Brownian motion, is studied in  \cite{tsai}. But essential for the proofs in that paper is again
that the driving process is Gaussian such that the techniques cannot be used  for Lévy driven models.
Moreover, all of these papers only analyze  univariate models, whereas we consider a multivariate model.

The paper is structured in the following way. We start by stating the basic facts on discrete-time sampled
linear state space models in \Cref{sec:preliminaries}. Then, the main results of this paper are presented. In \Cref{sec:WhittleEstimator}, we derive the consistency and the asymptotic
normality of the Whittle estimator. Interesting is that for non-Gaussian state space models the limit covariance matrix
of the Whittle estimator differs from the covariance matrix in the Gaussian case. As a contrast to
Whittle estimation for VARMA models, this confirms
that for the proofs standard techniques cannot be applied as well. An advantage of the Whittle estimator over the quasi-maximum likelihood
estimator of \cite{QMLE} is that we have an analytic representation of the limit covariance matrix which can be used for
the determination of confidence bands. For the proof of the asymptotic normality of the Whittle estimator
we show as well the asymptotic normality of the integrated periodogram. This result lays the basis for goodness
of fit tests for state space models which can be written as continuous functionals of the integrated periodogram as, e.g., the Grenander
and Rosenblatt test or Bartlett's
test for the integrated periodogram, Bartlett's $T_p$ test or the Cramér-von Mises test (cf. \cite{Priestley:1981}), and is topic of some
future research.
Furthermore, results of this type are typically used for bootstraps in the frequency domain.
In \Cref{sec:adjustedWhittleestimator}, we
motivate the definition of the adjusted Whittle estimator, which works only for univariate state space models with $d=m=1$, and present
 the consistency and the asymptotic normality for this estimator as well.
Finally, the applicability of the Whittle and the adjusted Whittle estimator is demonstrated through a simulation study
in \Cref{sec:simulation} and compared to the quasi maximum likelihood estimator
of \cite{QMLE}. 
 for the Whittle estimator, the detailed
  proofs are given in \Cref{sec:Proofs:WhittleEstimator} and 
 since the proofs for the adjusted Whittle estimator are very similar, they are moved
 to  \Cref{sec:Proofs:adjustedWhittle} in the Supplementary Material.
Some further simulation studies are presented there as well.


\subsection*{Notation}
For some matrix $A$, $\tr(A)$ stands for the trace of $A$, $\det(A)$ for its determinant, $A^\top$ for its transpose and $A^H$ for the transposed complex conjugated matrix. Further, $A[i,j]$ denotes the $(i,j)$-th component of $A$. We write $\vecc(A)$ for the vectorization of $A$ and $A\otimes B$ for the Kronecker product of $A$ and $B$ where $B$ is any matrix. The $N$-dimensional identity matrix is denoted as $I_N$. For a matrix function $g(\vartheta)$ in $\R^{m\times s}$ with $\vartheta$ in $\R^r$ the gradient with respect to the parameter vector $\vartheta$ is denoted by $\nabla_\vartheta g(\vartheta)=\frac{\partial \vecc(g(\vartheta))}{\partial \vartheta}\in \R^{ms\times r}$ and $\nabla_\vartheta g(\vartheta_0)$ is the shorthand for $\nabla_\vartheta g(\vartheta)|_{\vartheta=\vartheta_0}.$ If $g:\R^{r}\to \R$, then $\nabla_\vartheta ^2 g(\vartheta) \in \R^{r\times r}$ denotes the Hessian matrix of $g(\vartheta).$  For the real and the imaginary part of a complex valued $z$, we use the notation $\Re(z)$ and $\Im(z)$, respectively. Throughout the article, $\| \cdot\|$ denotes an arbitrary sub-multiplicative matrix norm. Finally, $\mathfrak{C}>0$ is a constant which may change from line to line.

\section{Preliminaries} \label{sec:preliminaries}

Let $\Theta \subset \R^{r}$ be a parameter space, and suppose that for any $\vartheta \in \Theta$, $A(\vartheta)\in \R^{N\times N}$ has eigenvalues with strictly negative real parts, $B(\vartheta)\in \R^{N\times d}$, $ C(\vartheta)\in \R^{m\times N}$ and
$L(\vartheta):=(L_t(\vartheta))_{t \in \R}$ is an $\R^d$-valued L\'evy process with existing covariance matrix $\Sigma_L(\vartheta)$.
A two-sided Lévy process can be constructed from two independent one-sided Lévy processes $(L^{(1)}_t(\vartheta))_{t\geq 0}$ and $(L^{(2)}_t(\vartheta))_{t\geq 0}$
through
$L_t(\vartheta)=L_{t}^{(1)}(\vartheta)\mathbf{1}_{ \{t\geq 0\}}-\lim_{s\uparrow -t}L^{(2)}_{s}(\vartheta)\mathbf{1}_{\{t<0\}}.$
Details on Lévy processes can be found in \cite{Sato}.
The stationary solution of the state space model
\begin{align*}
Y_t(\vartheta)= C(\vartheta)X_t(\vartheta) \quad \text{ and } \quad dX_t(\vartheta)= A(\vartheta)X_t(\vartheta)dt+ B(\vartheta)dL_t(\vartheta), \quad t\geq 0,
\end{align*}
has the representation
\beao
Y_t(\vartheta)= C(\vartheta)X_t(\vartheta) \quad \text{ and } \quad X_t(\vartheta)=\int_{-\infty}^{t} \e^{A(\vartheta)(t-s)}B(\vartheta)\,dL_s(\vartheta),  \quad t\geq 0.
\eeao
 The true parameter of the output process $Y$ of our observations $Y_{\Delta},\ldots,Y_{n\Delta}$ is denoted by $\vartheta_0$ and is supposed to be in $\Theta$.
Since we only observe the output process of the state space model at discrete time points with distance $\Delta>0$, we are interested in the probabilistic properties of $Y^{(\Delta)}(\vartheta):=(Y_k^{(\Delta)}(\vartheta))_{k\in \N_0}:=(Y_{k\Delta}(\vartheta))_{k\in \N_0}$ as well.
The discrete-time process $Y^{(\Delta)}(\vartheta)$ has the discrete-time state space representation
\begin{align*}
Y_k^{(\Delta)}(\vartheta)=C(\vartheta)X_k^{(\Delta)}(\vartheta) \quad  \text{ and } \quad
X^{(\Delta)}_k(\vartheta)=e^{A(\vartheta)\Delta}X^{(\Delta)}_{k-1}(\vartheta)+N^{(\Delta)}_k(\vartheta), \quad  k\in\N_0,
\end{align*}
where
$$ N_k^{(\Delta)}(\vartheta)=\int_{(k-1)\Delta}^{k\Delta}e^{A(\vartheta)(k\Delta-u)}B(\vartheta)dL_u(\vartheta), \quad  k\in\N_0, $$
is  an i.i.d. sequence with mean zero and covariance matrix
\beao \label{cov:N}
\Sigma_N^{(\Delta)}(\vartheta)=\int_0^{\Delta}e^{A(\vartheta)u}B(\vartheta)\Sigma_L(\vartheta)B(\vartheta)^\top e^{A(\vartheta)^\top u}du
\eeao
(see \cite{QMLE}, Proposition 3.6).
Furthermore, 
$Y^{(\Delta)}(\vartheta)$ has the vector MA$(\infty)$ representation $$Y_k^{(\Delta)}(\vartheta)=\sum_{j=0}^\infty \Phi_j(\vartheta) N_{k-j}^{(\Delta)}(\vartheta),\quad k\in\N_0,$$ where $\Phi_j(\vartheta)=C(\vartheta)e^{A(\vartheta)\Delta j} \in \R^{m\times N}$.
Defining $\Phi(z,\vartheta):=\sum_{j=0}^{\infty}\Phi_j(\vartheta)z^j,\,   z\in\C,$
an application of  \cite{BrockwellDavis}, Theorem 11.8.3, gives the spectral density
\begin{eqnarray} \label{spec}
\f(\omega,\vartheta)&=&\frac{1}{2\pi}\Phi(e^{-i\omega},\vartheta)\Sigma_N^{(\Delta)}(\vartheta)\Phi(e^{i\omega},\vartheta)^\top \\ &=&\frac{1}{2\pi}C(\vartheta)\left(e^{i\omega}I_{N}-e^{A(\vartheta)\Delta}\right)^{-1}\Sigma_N^{(\Delta)}(\vartheta)\left(e^{-i\omega}I_{N}-e^{A(\vartheta)^\top \Delta}\right)^{-1}C(\vartheta)^\top, \quad \omega \in [-\pi,\pi], \nonumber
\end{eqnarray}
of $Y^{(\Delta)}(\vartheta)$.
For better readability, we will omit the true parameter $\vartheta_0$ whenever possible and write \linebreak $Y^{(\Delta)}_k, X_k^{(\Delta)}, \f(\cdot),\ldots$
instead of $Y^{(\Delta)}_k(\vartheta_0), X^{(\Delta)}_k(\vartheta_0), \f(\cdot, \vartheta_0),\ldots$.

To define the adjusted Whittle estimator and for the proof of the consistency of the Whittle
estimator we introduce the linear innovations of $Y^{(\Delta)}(\vartheta)$.

\begin{definition}	
	The linear innovations $\varepsilon^{(\Delta)}(\vartheta):=(\varepsilon_k^{(\Delta)}(\vartheta))_{k\in \N}$ of $Y^{(\Delta)}(\vartheta)$ are defined by \begin{align*}
	\varepsilon_k^{(\Delta)}(\vartheta)&=Y_k^{(\Delta)}(\vartheta)-\operatorname{Pr}_{k-1}(\vartheta)Y_k^{(\Delta)}(\vartheta), \quad \text{ where}\\
	\operatorname{Pr}_k(\vartheta)&=\text{ orthogonal projection onto }\mathcal M_k(\vartheta):=\overline {\operatorname{span}}\{Y^{(\Delta)}_\nu(\vartheta): -\infty<\nu\leq k\},
	\end{align*}
	where the closure is taken in the Hilbert space of random vectors with square-integrable components and inner product $(X,Y)\to \E \left[X^\top Y\right].$
\end{definition}

Adjusted to our notation, Proposition 2.1 of \cite{QMLE} gives the following representation of the linear innovations
of $Y^{(\Delta)}(\vartheta)$.

\begin{proposition}\label{invertible}	
	Suppose that the eigenvalues of $A(\vartheta)$ have strictly negative real parts and $\Sigma_L(\vartheta)$ is positive definite. Then, the following holds:
	\begin{itemize}
		\item[(a)] The Riccati equation
		\begin{eqnarray*}
			\Omega^{(\Delta)}(\vartheta)&=&e^{A(\vartheta)\Delta}\Omega^{(\Delta)}(\vartheta) \left(e^{A(\vartheta)\Delta}\right)^\top + \Sigma_N^{(\Delta)}(\vartheta)\\&&- \left( e^{A(\vartheta)\Delta}\Omega^{(\Delta)} (\vartheta)C(\vartheta)^\top\right)\left( C(\vartheta)\Omega^{(\Delta)}(\vartheta) C(\vartheta)^\top\right)^{-1}\left( e^{A(\vartheta)\Delta}\Omega^{(\Delta)}(\vartheta) C(\vartheta)^\top\right)^\top
		\end{eqnarray*} has a unique positive semidefinite solution $\Omega^{(\Delta)}(\vartheta)$.
		\item[(b)] Let
		$$K^{(\Delta)}(\vartheta)=\left( e^{A(\vartheta)\Delta}\Omega^{(\Delta)}(\vartheta) C(\vartheta)^\top\right)\left( C(\vartheta)\Omega^{(\Delta)}(\vartheta) C(\vartheta)^\top\right)^{-1}$$ be the Kalman gain matrix. Furthermore, define the polynomial $\Pi$  as
		\begin{eqnarray*} \label{Pi:rep}	
			\Pi(z,\vartheta):=\Pi^{(\Delta)}(z,\vartheta):=\left(I_m -C(\vartheta)\left(I_{N}-(e^{A(\vartheta)\Delta}-K^{(\Delta)}(\vartheta)C(\vartheta))z\right)^{-1}K^{(\Delta)}(\vartheta)z\right).
		\end{eqnarray*}
		Then, the linear innovations are
		\begin{eqnarray*}
			\varepsilon_{k}^{(\Delta)}(\vartheta)=\Pi(\mathsf{ B},\vartheta)Y^{(\Delta)}_k(\vartheta), \quad k\in\N.			
		\end{eqnarray*}
		Furthermore, the absolute value of any eigenvalue of $e^{A(\vartheta)\Delta}-K^{(\Delta)}(\vartheta)C(\vartheta)$ is less than one and $Y^{(\Delta)}(\vartheta)$ has the moving average representation
		\begin{eqnarray} \label{Pi-1}
		Y_k^{(\Delta)}(\vartheta)%
		=\varepsilon^{(\Delta)}_k(\vartheta)+C(\vartheta)\sum_{j=1}^{\infty}\left(e^{A(\vartheta)\Delta}\right)^{j-1}K^{(\Delta)}(\vartheta)\varepsilon^{(\Delta)}_{k-j}(\vartheta)
		=:\Pi^{-1}(\mathsf{ B},\vartheta)\varepsilon^{(\Delta)}_{k}(\vartheta). \quad\quad\quad
		\end{eqnarray}	
		\item[(c)] The covariance matrix $V^{(\Delta)}(\vartheta)$ of the linear innovations $\varepsilon^{(\Delta)}(\vartheta)$ has the representation $V^{(\Delta)}(\vartheta)=C(\vartheta)\Omega^{(\Delta)}(\vartheta) C(\vartheta)^\top.$ If $\Omega^{(\Delta)}(\vartheta)$ is positive definite and $C(\vartheta)$ has full rank, $V^{(\Delta)}(\vartheta)$ is invertible.
	\end{itemize}
\end{proposition}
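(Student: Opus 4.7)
\emph{Proof plan.} This proposition is essentially a restatement of standard results from discrete-time Kalman filtering applied to the sampled state space system with transition matrix $e^{A(\vartheta)\Delta}$, driving noise covariance $\Sigma_N^{(\Delta)}(\vartheta)$, and observation matrix $C(\vartheta)$; the plan is to derive (a)--(c) from this classical machinery.

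First I would verify the prerequisites for the stationary Kalman filter. Since $A(\vartheta)$ has eigenvalues with strictly negative real parts, the discrete-time transition matrix $e^{A(\vartheta)\Delta}$ has all eigenvalues of modulus strictly less than one, so the pair $(e^{A(\vartheta)\Delta},C(\vartheta))$ is automatically detectable and $(e^{A(\vartheta)\Delta},(\Sigma_N^{(\Delta)}(\vartheta))^{1/2})$ is stabilizable. The positive definiteness of $\Sigma_L(\vartheta)$, together with the integral representation of $\Sigma_N^{(\Delta)}(\vartheta)$, ensures that $\Sigma_N^{(\Delta)}(\vartheta)$ is positive definite. Under these hypotheses, classical results on the discrete-time algebraic Riccati equation yield existence and uniqueness of a positive semidefinite stabilizing solution, which gives part (a).

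For part (b), I would build the stationary Kalman filter explicitly. Writing $\widehat X_{k+1\mid k}(\vartheta)$ for the orthogonal projection of $X_{k+1}^{(\Delta)}(\vartheta)$ onto $\mathcal M_k(\vartheta)$, the usual innovations update yields
\begin{align*}
\widehat X_{k+1\mid k}(\vartheta)=\bigl(e^{A(\vartheta)\Delta}-K^{(\Delta)}(\vartheta)C(\vartheta)\bigr)\widehat X_{k\mid k-1}(\vartheta)+K^{(\Delta)}(\vartheta)Y_k^{(\Delta)}(\vartheta),
\end{align*}
with $K^{(\Delta)}(\vartheta)$ exactly as defined in the statement. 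The crucial point is that the closed-loop matrix $e^{A(\vartheta)\Delta}-K^{(\Delta)}(\vartheta)C(\vartheta)$ has spectral radius strictly less than one; this is precisely the stabilizing property of the Riccati solution from the previous step. Given this, the recursion can be inverted as a convergent geometric series, expressing $\widehat X_{k\mid k-1}(\vartheta)$ as a causal linear filter of $Y^{(\Delta)}(\vartheta)$. Plugging this into $\varepsilon_k^{(\Delta)}(\vartheta)=Y_k^{(\Delta)}(\vartheta)-C(\vartheta)\widehat X_{k\mid k-1}(\vartheta)$ produces the advertised $\Pi(\mathsf B,\vartheta)$ representation, and a direct power series rearrangement (using that the closed-loop matrix is contractive) delivers the MA$(\infty)$ inverse $\Pi^{-1}(\mathsf B,\vartheta)$ displayed in \eqref{Pi-1}.

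Part (c) is then immediate: since $\varepsilon_k^{(\Delta)}(\vartheta)=C(\vartheta)\bigl(X_k^{(\Delta)}(\vartheta)-\widehat X_{k\mid k-1}(\vartheta)\bigr)$ and $\Omega^{(\Delta)}(\vartheta)$ is by construction the stationary covariance of the state prediction error, one reads off $V^{(\Delta)}(\vartheta)=C(\vartheta)\Omega^{(\Delta)}(\vartheta)C(\vartheta)^\top$, and invertibility under positive definiteness of $\Omega^{(\Delta)}(\vartheta)$ combined with full row rank of $C(\vartheta)$ is a one-line linear algebra argument. The main obstacle throughout is the stability claim: showing that the algebraic Riccati equation admits a \emph{stabilizing} solution rather than merely a positive semidefinite one, so that $e^{A(\vartheta)\Delta}-K^{(\Delta)}(\vartheta)C(\vartheta)$ has spectral radius below one; everything else amounts to algebraic manipulation of the Kalman recursion and its transfer function. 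In the present paper, the cleanest route is simply to invoke Proposition 2.1 of \cite{QMLE}, but the argument sketched above captures the underlying content.
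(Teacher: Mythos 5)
Your proposal matches the paper exactly: the paper gives no proof of this proposition but simply quotes it as Proposition 2.1 of \cite{QMLE} (adjusted to the present notation), which is precisely the route you name in your final sentence, and your sketch of the underlying stationary Kalman-filter/Riccati argument is a faithful outline of how that cited result is established. One minor caveat: positive definiteness of $\Sigma_N^{(\Delta)}(\vartheta)$ does not follow from positive definiteness of $\Sigma_L(\vartheta)$ alone but additionally requires controllability of $(A(\vartheta),B(\vartheta))$ (cf.\ \Cref{identifizierbar}(c), which invokes the minimality assumption $(A5)$), though this does not affect the existence and uniqueness argument for the Riccati solution when the transition matrix $e^{A(\vartheta)\Delta}$ is already stable.
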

Note that $\tr(V^{(\Delta)}(\vartheta))=\min_{X \in \mathcal{M}_{k-1}(\vartheta)}\E[(Y^{(\Delta)}_k(\vartheta)-X)^\top(Y^{(\Delta)}_k(\vartheta)-X)].$
An application of  \cite{BrockwellDavis}, Theorem 11.8.3, and \eqref{Pi-1} yield the representation
\begin{align}
\label{specY}
\f(\omega,\vartheta)=\Pi^{-1}(e^{-i\omega},\vartheta)\frac{V^{(\Delta)}(\vartheta)}{2\pi}\Pi^{-1}(e^{i\omega},\vartheta)^\top,
\quad  \omega \in [-\pi,\pi],
\end{align}
for the spectral density of $Y^{(\Delta)}(\vartheta)$.

\section{The Whittle estimator} \label{sec:WhittleEstimator}

\subsection{Consistency of the Whittle estimator} \label{sec:Cons:Whittle}

\begin{assumptionletterA}
	For all $\vartheta \in \Theta$ the following holds:\\[2mm]
	$\mbox{}$\,\,(A1) \,\, The parameter space $\Theta$ is a compact subset of $\R^r$.\\[1mm]
	$\mbox{}$\,\,(A2) \,\, $L(\vartheta)=(L_t(\vartheta))_{t\in \R}$ is a centered Lévy process with positive definite covariance matrix $\Sigma_L(\vartheta)$.\\[1mm]
	$\mbox{}$\,\,(A3) \,\, The eigenvalues of 	$A(\vartheta)$ have strictly negative real parts.\\[1mm]
	$\mbox{}$\,\,(A4) \,\, The functions $\vartheta \mapsto \Sigma_L(\vartheta),\ \vartheta \mapsto A(\vartheta),\ \vartheta\mapsto B(\vartheta)$ and $\vartheta\mapsto C(\vartheta)$ are continuous. In addition, \\
    $\mbox{}$\hspace*{1cm} $C(\vartheta)$ has full rank.\\[1mm]
	$\mbox{}$\,\,(A5) \,\, The linear state space model $(A(\vartheta),B(\vartheta),C(\vartheta),L(\vartheta))$ is minimal with McMillan degree $N$, i.e.,\\
 $\mbox{}$\hspace*{1cm} there exist no integer $\widetilde N<N$ and matrices $\tilde A \in \R^{\widetilde N\times \widetilde N},\ \widetilde B \in \R^{\widetilde N\times d}$ and $\widetilde C \in \R^{m \times \widetilde N}$ with\\
   $\mbox{}$\hspace*{1cm} $C(\vartheta)(z I_N -A(\vartheta))^{-1}B(\vartheta)=\widetilde C(z I_{\widetilde N} -\widetilde A)^{-1}\widetilde B$ for all $z \in \R$.\\[1mm] 
	$\mbox{}$\,\,(A6) \,\,
		For any  $\vartheta_1,\vartheta_2\in \Theta $ with $\vartheta_1\neq \vartheta_2$ there exists an $\omega \in [-\pi,\pi]$ such that
		$f_Y(\omega,\vartheta_1)\neq f_Y(\omega,\vartheta_2),$
     $\mbox{}$\hspace*{1cm} where $f_Y(\omega,\vartheta)$ is the spectral density of $Y(\vartheta)$.\\[1mm]
	$\mbox{}$\,\,(A7) \,\, The spectrum of $A(\vartheta)\in \R^{N\times N}$ is a subset of $\left\{z\in \C: -\frac{\pi}{\Delta}<\Im(z)<\frac\pi \Delta\right\}$.
\end{assumptionletterA}

\begin{remark}
	\label{identifizierbar}~
	\begin{itemize}
		\item[(a)] Note that Assumptions $(A2)$ and $(A3)$ allow us to calculate the linear innovations.
	Furthermore, the covariance matrix  $V^{(\Delta)}(\vartheta)$ of the linear innovations is non-singular (cf.  Lemma 3.14 in \cite{QMLE}). 
		\item[(b)] Theorem 2.3.4 in \cite{HannanDeistler} shows that $(A5)$ guarantees the uniqueness of the state space representation $(A(\vartheta),B(\vartheta), C(\vartheta), L(\vartheta))$ up to a change of basis. Hence, $(A5)$ reduces redundancies in the continuous-time model.
		In addition, \cite{QMLE}, Theorem 3.13, proved that Assumptions $(A2)$--$(A7)$ provide $\Delta$-identifiability of the collection of
		output processes $(Y(\vartheta), \vartheta \in \Theta)$, i.e., for fixed $\Delta>0$ and arbitrary $\vartheta_1,\vartheta_2 \in \Theta$ with $\vartheta_1\neq \vartheta_2$, there exists an $\omega \in [-\pi,\pi]$ with $f_Y^{(\Delta)}(\omega,\vartheta_1)\neq f_Y^{(\Delta)}(\omega,\vartheta_2).$
		\item[(c)] Assumptions $(A2)$ and $(A5)$ imply that $\Sigma_N^{(\Delta)}(\vartheta)$  has full rank.
		\item[(d)] Under Assumption A and representation~\eqref{spec} of the spectral density, the inverse  $f^{(\Delta)}_Y(\omega,\vartheta)^{-1}$ of the spectral density exists and the mapping $(\vartheta,\omega) \mapsto f^{(\Delta)}_Y({\omega},\vartheta)^{-1}$ is continuous.
	\end{itemize}
	
\end{remark}
We start to prove some auxiliary results which we need for the proof of the consistency of Whittle`s estimator. The following proposition states that the Whittle function $W_n$ converges almost surely uniformly.

{	\begin{proposition}\label{Hilfssatz1}
		Let Assumptions $(A1)$--$(A4)$ hold and  $$W(\vartheta):=\frac{1}{2\pi}\int_{-\pi}^{\pi}\operatorname{tr}\left(\f(\omega,\vartheta)^{-1}{\f(\omega)}\right)+\log\left(\det\left(\f(\omega,\vartheta)\right)\right)d\omega, \quad\vartheta \in\Theta. $$
		Then,
		\begin{align*}
		\sup_{\vartheta\in \Theta}\left|W_n(\vartheta)-W(\vartheta)\right|\overset{n\to \infty}{\longrightarrow}0 \quad \P\text{-a.s.}
		\end{align*}
	\end{proposition}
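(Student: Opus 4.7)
The plan is to split $W_n(\vartheta)-W(\vartheta)=A_n(\vartheta)+B_n(\vartheta)$ into a purely deterministic log-determinant part $B_n(\vartheta):=\frac{1}{2n}\sum_{j}\log\det \f(\omega_j,\vartheta)-\frac{1}{2\pi}\int_{-\pi}^{\pi}\log\det \f(\omega,\vartheta)\,d\omega$ and a stochastic trace part $A_n(\vartheta):=\frac{1}{2n}\sum_{j}\tr(\f(\omega_j,\vartheta)^{-1}I_n(\omega_j))-\frac{1}{2\pi}\int_{-\pi}^{\pi}\tr(\f(\omega,\vartheta)^{-1}\f(\omega))\,d\omega$, and treat them by independent arguments. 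Under $(A1)$--$(A4)$ together with \Cref{identifizierbar}(d) the maps $(\omega,\vartheta)\mapsto\log\det \f(\omega,\vartheta)$ and $(\omega,\vartheta)\mapsto \f(\omega,\vartheta)^{-1}$ are continuous, hence uniformly continuous, on the compact set $[-\pi,\pi]\times\Theta$. For the deterministic term this already suffices: the Riemann-sum error on the $2n$ equispaced nodes $\omega_j=\pi j/n$ vanishes uniformly in $\vartheta$, so $\sup_{\vartheta\in\Theta}|B_n(\vartheta)|\to 0$ by a standard equicontinuity argument.

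For $A_n$ I would exploit the rational structure~\eqref{specY} and expand the matrix-valued function $\f(\omega,\vartheta)^{-1}=\sum_{h\in\Z}C_h(\vartheta)e^{ih\omega}$ in Fourier series. By \Cref{invertible}(b) the eigenvalues of $e^{A(\vartheta)\Delta}-K^{(\Delta)}(\vartheta)C(\vartheta)$ all have modulus strictly smaller than $1$; continuity of $\vartheta\mapsto(A,B,C,\Sigma_L)$ and of the Kalman gain together with compactness of $\Theta$ then yields a \emph{uniform} bound $\rho<1$ on the spectral radius, and therefore a uniform geometric decay $\|C_h(\vartheta)\|\le\mathfrak{C}\rho^{|h|}$ for all $h\in\Z$, $\vartheta\in\Theta$. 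Plugging the series into the two integrals and using orthogonality of $(e^{ih\omega})_{h\in\Z}$ in $L^2(-\pi,\pi)$ gives
\[
\frac{1}{2\pi}\int_{-\pi}^{\pi}\tr\bigl(\f(\omega,\vartheta)^{-1}\f(\omega)\bigr)d\omega=\sum_{h\in\Z}\tr\bigl(C_h(\vartheta)\Gamma_Y^{(\Delta)}(h)\bigr)
\]
and $\frac{1}{2\pi}\int\tr(\f(\omega,\vartheta)^{-1}I_n(\omega))d\omega=\sum_{|h|<n}\tr(C_h(\vartheta)\overline{\Gamma}_n(h))$, whereas the discrete-frequency identity $\frac{1}{2n}\sum_{j=-n+1}^{n}e^{ik\omega_j}=\sum_{\ell\in\Z}\1_{\{k=2n\ell\}}$ shows that the Riemann sum appearing in $A_n$ differs from this last integral only by an aliasing remainder of order $O(\rho^n)$, again uniformly in $\vartheta$.

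The proof therefore reduces to showing $\sup_{\vartheta\in\Theta}\bigl|\sum_{h\in\Z}\tr(C_h(\vartheta)[\overline{\Gamma}_n(h)-\Gamma_Y^{(\Delta)}(h)])\bigr|\to 0$ $\P$-a.s., with the convention $\overline{\Gamma}_n(h):=0$ for $|h|\ge n$. Since $Y^{(\Delta)}$ is a causal linear functional of the i.i.d.\ sequence $(N_k^{(\Delta)})$, it is stationary and ergodic and Birkhoff's theorem gives $\overline{\Gamma}_n(h)\to\Gamma_Y^{(\Delta)}(h)$ a.s.\ for every fixed $h$. I would then truncate the series at a cutoff $H$: the tail $|h|>H$ is controlled by $\mathfrak{C}\rho^{H}$ together with an a.s.\ upper bound on $\sup_h\|\overline{\Gamma}_n(h)\|$ (obtained by applying the ergodic theorem componentwise to the products $Y_{(k+h)\Delta}^{(i)}Y_{k\Delta}^{(j)}$), while on the head $|h|\le H$ the Birkhoff convergence combined with continuity of $\vartheta\mapsto C_h(\vartheta)$ on the compact set $\Theta$ is lifted to uniform convergence by a finite covering argument; letting $H\to\infty$ after $n\to\infty$ closes the estimate. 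The main obstacle is exactly this uniformization in $\vartheta$; it rests entirely on the uniform geometric bound for $C_h(\vartheta)$, which in turn depends on compactness of $\Theta$ and on continuity of the Kalman gain $K^{(\Delta)}(\vartheta)$ supplied by \Cref{invertible}.
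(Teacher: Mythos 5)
Your argument is correct in outline but takes a genuinely different route from the paper. Both proofs make the same initial split into the deterministic log-determinant part (handled identically, by uniform continuity of $(\omega,\vartheta)\mapsto\log\det \f(\omega,\vartheta)$ on the compact product and Riemann-sum convergence) and the stochastic trace part. For the latter, however, the paper does \emph{not} use the full Fourier series of $\f(\omega,\vartheta)^{-1}$: it approximates $\f(\omega,\vartheta)^{-1}$ by the Ces\`aro (Fej\'er) sum $q_M(\omega,\vartheta)$ of its Fourier series, which converges uniformly in $(\omega,\vartheta)$ using \emph{only} joint continuity of the inverse spectral density, and controls the replacement error by $\epsilon\cdot\frac{1}{2n}\sum_j\|I_n(\omega_j)\|$ via the positive semidefiniteness of $I_n$ and the a.s.\ convergence of $\overline\Gamma_n(0)$; the remaining finite sum is then handled exactly as in your head term. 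Your route instead exploits the rational structure of $\f(\omega,\vartheta)^{-1}$ through \eqref{specY} to obtain a uniform geometric decay $\|C_h(\vartheta)\|\le\mathfrak{C}\rho^{|h|}$, which buys an explicit rate and a cleaner truncation/aliasing bookkeeping, at the cost of needing more regularity than the paper assumes at this stage.

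The one step you should not wave through is precisely that uniform geometric bound. \Cref{invertible} gives, \emph{pointwise in} $\vartheta$, existence and uniqueness of the Riccati solution $\Omega^{(\Delta)}(\vartheta)$ and the spectral radius bound $\rho\bigl(e^{A(\vartheta)\Delta}-K^{(\Delta)}(\vartheta)C(\vartheta)\bigr)<1$; it does \emph{not} assert continuity of $\vartheta\mapsto\Omega^{(\Delta)}(\vartheta)$ or of the Kalman gain $\vartheta\mapsto K^{(\Delta)}(\vartheta)$, which is what you need (together with compactness of $\Theta$ and a Gelfand-formula covering argument) to promote the pointwise spectral radius bound to a uniform $\rho<1$ and a $\vartheta$-independent constant $\mathfrak{C}$. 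Continuity of the Riccati solution under $(A1)$--$(A4)$ is true and is established in the references the paper builds on, but it is a nontrivial fact that must be invoked or proved, not read off from \Cref{invertible}; alternatively you could derive the uniform geometric decay of the Fourier coefficients directly from \eqref{spec}, since $\det\f(\omega,\vartheta)\neq 0$ on the unit circle forces $\f(\cdot,\vartheta)^{-1}$ to be analytic in an annulus whose width can be bounded below uniformly over the compact set $\Theta$ by continuity. With that point repaired, your truncation argument (Birkhoff on the head, $\mathfrak{C}\rho^{H}$ times the a.s.\ bounded $\sup_h\|\overline\Gamma_n(h)\|$ on the tail, $O(\rho^n)$ aliasing remainder) goes through; note also that uniformity in $\vartheta$ on the head already follows from the uniform bound on $\|C_h(\vartheta)\|$, so no covering argument is needed there.
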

	Obviously, it is necessary that $\vartheta_0$ is a global minimum of  $W$   to guarantee the consistency of the Whittle estimator.
		\begin{proposition}\label{schritt2.2}
		Let Assumptions $(A1)$--$(A4)$ and $(A6)$ hold. Then,  $W$ has a unique global minimum in $\vartheta_0$.
	\end{proposition}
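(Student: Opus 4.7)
The plan is to show that $W(\vartheta)-W(\vartheta_0)\ge 0$ for every $\vartheta\in\Theta$, with equality only at $\vartheta=\vartheta_0$. Since $\tr(\f(\omega,\vartheta_0)^{-1}\f(\omega,\vartheta_0))=\tr(I_m)=m$, I would first rewrite the difference as
\begin{align*}
W(\vartheta)-W(\vartheta_0)=\frac{1}{2\pi}\int_{-\pi}^{\pi}\Big[\tr\big(M(\omega)\big)-m-\log\det M(\omega)\Big]\,d\omega,
\end{align*}
where $M(\omega):=\f(\omega,\vartheta)^{-1}\f(\omega,\vartheta_0)$, after combining the $\log\det$ terms into $\log\det M(\omega)=\log\det\f(\omega,\vartheta_0)-\log\det\f(\omega,\vartheta)$.

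Next, I would establish pointwise non-negativity of the integrand. By Remark \ref{identifizierbar}(d), both $\f(\omega,\vartheta)$ and $\f(\omega,\vartheta_0)$ are Hermitian positive definite, so $M(\omega)$ is similar to the Hermitian positive definite matrix $\f(\omega,\vartheta)^{-1/2}\f(\omega,\vartheta_0)\f(\omega,\vartheta)^{-1/2}$, and therefore its eigenvalues $\lambda_1(\omega),\ldots,\lambda_m(\omega)$ are positive real. Hence
\begin{align*}
\tr\big(M(\omega)\big)-m-\log\det M(\omega)=\sum_{i=1}^m\big(\lambda_i(\omega)-1-\log\lambda_i(\omega)\big)\ge 0,
\end{align*}
by the elementary inequality $x-1-\log x\ge 0$ for $x>0$, with equality precisely at $x=1$. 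This gives $W(\vartheta)\ge W(\vartheta_0)$, so $\vartheta_0$ is a global minimum.

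For uniqueness, equality $W(\vartheta)=W(\vartheta_0)$ forces the non-negative integrand to vanish Lebesgue-almost everywhere on $[-\pi,\pi]$; since the representation \eqref{spec} makes both spectral densities continuous in $\omega$, the integrand vanishes everywhere. Thus $\lambda_i(\omega)=1$ for every $i$ and every $\omega$, which means $M(\omega)=I_m$ and therefore $\f(\omega,\vartheta)=\f(\omega,\vartheta_0)$ for all $\omega\in[-\pi,\pi]$. Invoking the $\Delta$-identifiability recalled in Remark \ref{identifizierbar}(b) (which is ensured by $(A2)$--$(A7)$ and, at the level of the continuous-time spectrum, by $(A6)$), this equality of sampled spectral densities implies $\vartheta=\vartheta_0$, giving uniqueness.

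The main obstacle is the spectral step: $M(\omega)$ is not itself Hermitian, so one has to argue via the similarity transform to conclude that its eigenvalues are positive reals before applying $x-1-\log x\ge 0$. The remainder of the argument is a direct combination of non-negativity of the integrand, continuity of $\f(\cdot,\vartheta)$ in $\omega$, and the identifiability assumption.
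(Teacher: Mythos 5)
Your proof is correct in substance but takes a genuinely different route from the paper. The paper does not argue in the frequency domain at all: it invokes \Cref{schritt2} to identify $W$ with the time-domain limit $\mathcal{L}$ of the quasi-maximum-likelihood criterion and then cites Lemma 2.10 of \cite{QMLE}, where the unique minimum of $\mathcal{L}$ at $\vartheta_0$ has already been established via the linear innovations. Your argument is self-contained: the decomposition of $W(\vartheta)-W(\vartheta_0)$ into $\frac{1}{2\pi}\int_{-\pi}^{\pi}\left[\tr(M(\omega))-m-\log\det M(\omega)\right]d\omega$ with $M(\omega)=\f(\omega,\vartheta)^{-1}\f(\omega,\vartheta_0)$, the similarity of $M(\omega)$ to the Hermitian positive definite matrix $\f(\omega,\vartheta)^{-1/2}\f(\omega,\vartheta_0)\f(\omega,\vartheta)^{-1/2}$, and the scalar inequality $x-1-\log x\geq 0$ are all sound, and the equality analysis correctly forces $\f(\omega,\vartheta)=\f(\omega,\vartheta_0)$ for all $\omega$. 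What your approach buys is independence from the external reference and an explicit Kullback--Leibler-type interpretation of $W(\vartheta)-W(\vartheta_0)$; what it costs is that the final step --- passing from equality of the \emph{sampled} spectral densities to $\vartheta=\vartheta_0$ --- is exactly the $\Delta$-identifiability of \Cref{identifizierbar}(b), which the paper obtains from $(A2)$--$(A7)$, i.e.\ it also uses $(A5)$ and $(A7)$, not only the hypotheses listed in the proposition. Your parenthetical acknowledges this but blurs it: under $(A1)$--$(A4)$ and $(A6)$ alone, $(A6)$ only separates the continuous-time spectra and does not by itself exclude aliasing of the sampled process. The paper's own proof carries the same dependence implicitly through the conditions cited from \cite{QMLE}, so this is a hypothesis-bookkeeping issue shared with the original rather than a defect peculiar to your argument, but it should be made explicit.
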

The proof is based on an alternative representation of $W$. Namely, the function $W$ is exactly  the limit function of the quasi maximum likelihood estimator of \cite{QMLE}.

	\begin{lemma}\label{schritt2}
		Let Assumptions $(A1)$--$(A4)$ hold and let $\xi_k^{(\Delta)}(\vartheta)=\Pi(\mathsf{B},\vartheta)Y_k^{(\Delta)}$ with $\Pi(z,\vartheta)$ as given in  \Cref{invertible}.
		Furthermore, define
		$$  \mathcal{L}(\vartheta):=\E\left[\operatorname{tr}\left(\xi_1^{(\Delta)}(\vartheta)^\top V^{(\Delta)}(\vartheta)^{-1}\xi_{1}^{(\Delta)}(\vartheta)\right)\right]+\log(\det(V^{(\Delta)}(\vartheta)))-m\log(2\pi), \quad\vartheta \in\Theta.$$
		Then,	$W(\vartheta)= \mathcal{L}(\vartheta)$ for $\vartheta \in\Theta$.
	\end{lemma}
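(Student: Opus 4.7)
The plan is to exploit the spectral factorization \eqref{specY}, namely
$$
f_Y^{(\Delta)}(\omega,\vartheta)=\Pi^{-1}(e^{-i\omega},\vartheta)\,\tfrac{V^{(\Delta)}(\vartheta)}{2\pi}\,\Pi^{-1}(e^{i\omega},\vartheta)^\top,
$$
in order to split
$$
W(\vartheta)=\underbrace{\frac{1}{2\pi}\int_{-\pi}^{\pi}\tr\!\left(f_Y^{(\Delta)}(\omega,\vartheta)^{-1}f_Y^{(\Delta)}(\omega)\right)d\omega}_{=:T_1(\vartheta)}\;+\;\underbrace{\frac{1}{2\pi}\int_{-\pi}^{\pi}\log\det f_Y^{(\Delta)}(\omega,\vartheta)\,d\omega}_{=:T_2(\vartheta)},
$$
and to identify $T_1$ with the expected quadratic innovation term and $T_2$ with $\log\det V^{(\Delta)}(\vartheta)-m\log(2\pi)$.

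For $T_2$, I would take determinants in \eqref{specY} and integrate. After pulling out the constant factor $\det(V^{(\Delta)}(\vartheta)/(2\pi))$, the remaining contribution is $-\frac{1}{2\pi}\int_{-\pi}^{\pi}\log\det\Pi(e^{-i\omega},\vartheta)\,d\omega$ plus its complex conjugate. By \Cref{invertible}(b), all eigenvalues of $e^{A(\vartheta)\Delta}-K^{(\Delta)}(\vartheta)C(\vartheta)$ lie strictly inside the unit disk, so $z\mapsto\det\Pi(z,\vartheta)$ is analytic and non-vanishing on a neighbourhood of $\{|z|\le 1\}$ with $\Pi(0,\vartheta)=I_m$. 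Hence $z\mapsto\log\det\Pi(z,\vartheta)$ is a well-defined holomorphic function on that neighbourhood and the mean-value property (equivalently, Jensen's formula with no zeros inside the disk) gives
$$
\frac{1}{2\pi}\int_{-\pi}^{\pi}\log\det\Pi(e^{-i\omega},\vartheta)\,d\omega=\log\det\Pi(0,\vartheta)=0.
$$
This yields $T_2(\vartheta)=\log\det V^{(\Delta)}(\vartheta)-m\log(2\pi)$.

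For $T_1$, I would substitute the inverse
$$
f_Y^{(\Delta)}(\omega,\vartheta)^{-1}=2\pi\,\Pi(e^{i\omega},\vartheta)^{\top}V^{(\Delta)}(\vartheta)^{-1}\Pi(e^{-i\omega},\vartheta)
$$
into the integrand, use cyclicity of the trace to move $\Pi(e^{i\omega},\vartheta)^{\top}$ to the far right, and obtain
$$
T_1(\vartheta)=\tr\!\left(V^{(\Delta)}(\vartheta)^{-1}\int_{-\pi}^{\pi}\Pi(e^{-i\omega},\vartheta)f_Y^{(\Delta)}(\omega)\Pi(e^{i\omega},\vartheta)^{\top}d\omega\right).
$$
The inner integral is precisely the autocovariance at lag $0$ of the filtered process $\xi_k^{(\Delta)}(\vartheta)=\Pi(\mathsf{B},\vartheta)Y_k^{(\Delta)}$, since the spectral density of a linearly filtered stationary process $\Pi(\mathsf{B},\vartheta)Y^{(\Delta)}$ equals $\Pi(e^{-i\omega},\vartheta)f_Y^{(\Delta)}(\omega)\Pi(e^{i\omega},\vartheta)^{\top}$ and the integral over $[-\pi,\pi]$ recovers $\Gamma_\xi(0,\vartheta)=\E[\xi_1^{(\Delta)}(\vartheta)\xi_1^{(\Delta)}(\vartheta)^{\top}]$ (this is \eqref{kovarianz} applied to $\xi^{(\Delta)}(\vartheta)$, which is stationary with finite second moments under $(A2)$–$(A3)$). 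Consequently,
$$
T_1(\vartheta)=\tr\!\left(V^{(\Delta)}(\vartheta)^{-1}\,\E[\xi_1^{(\Delta)}(\vartheta)\xi_1^{(\Delta)}(\vartheta)^{\top}]\right)=\E\!\left[\tr\!\left(\xi_1^{(\Delta)}(\vartheta)^{\top}V^{(\Delta)}(\vartheta)^{-1}\xi_1^{(\Delta)}(\vartheta)\right)\right].
$$
Adding $T_1$ and $T_2$ gives exactly $\mathcal{L}(\vartheta)$, finishing the proof.

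The only genuine subtlety is the Kolmogorov–Szegő identity step for $T_2$, i.e.\ justifying that $\log\det\Pi(\cdot,\vartheta)$ is holomorphic on the closed unit disk; but this is immediate from \Cref{invertible}(b) together with the fact that by construction $\Pi(0,\vartheta)=I_m$, so no additional assumption is needed. Everything else is just an application of (specY), the cyclicity of the trace, and Fubini, so the proof is short.
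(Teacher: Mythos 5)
Your proof is correct, and the treatment of $T_1$ coincides with the paper's: both substitute the factorization \eqref{specY} into the integrand, use cyclicity of the trace, and identify $\int_{-\pi}^{\pi}\Pi(e^{-i\omega},\vartheta)\f(\omega)\Pi(e^{i\omega},\vartheta)^{\top}d\omega$ as the lag-zero autocovariance of the pseudo-innovations $\xi^{(\Delta)}(\vartheta)$ via Theorem 11.8.3 of \cite{BrockwellDavis}. Where you genuinely diverge is in the $T_2$ step: the paper disposes of $\frac{1}{2\pi}\int_{-\pi}^{\pi}\log\det(2\pi\f(\omega,\vartheta))d\omega=\log\det V^{(\Delta)}(\vartheta)$ by citing the Kolmogorov--Szeg\H{o} prediction-error formula (Theorem 3'{}'{}', Chapter 3, of \cite{Hannanmult}), whereas you prove the identity directly from the rational factorization, using that $\log\det\Pi(\cdot,\vartheta)$ is holomorphic on a neighbourhood of the closed unit disk with $\Pi(0,\vartheta)=I_m$, so its circle average vanishes by the mean-value property. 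Your route is more self-contained and makes transparent exactly why the formula holds for this model class; the paper's citation is shorter and avoids the analyticity bookkeeping. One small imprecision in your justification: the eigenvalue condition on $e^{A(\vartheta)\Delta}-K^{(\Delta)}(\vartheta)C(\vartheta)$ from \Cref{invertible}(b) gives analyticity of $\Pi(\cdot,\vartheta)$ on a neighbourhood of the closed disk, but not by itself the non-vanishing of $\det\Pi(\cdot,\vartheta)$ there; for that you should either invoke the analyticity of $\Pi^{-1}(\cdot,\vartheta)$ on the closed disk (which follows from \eqref{Pi-1} and the fact that, under $(A3)$, the eigenvalues of $e^{A(\vartheta)\Delta}$ lie strictly inside the unit circle) or use the determinant identity $\det\Pi(z,\vartheta)=\det\bigl(I_N-e^{A(\vartheta)\Delta}z\bigr)/\det\bigl(I_N-(e^{A(\vartheta)\Delta}-K^{(\Delta)}(\vartheta)C(\vartheta))z\bigr)$. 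With that one-line addition your argument is complete.
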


	Finally, we are able to state the first main result of this paper, which gives the consistency of the Whittle estimator.

	\begin{theorem1}\label{satz1}
		Let Assumption A hold. Then, as $n\to \infty$, $$\widehat{\vartheta}_n^{(\Delta)}\overset{a.s.}{\longrightarrow}\vartheta_0.$$
	\end{theorem1}

	\subsection{Asymptotic normality of the Whittle estimator} \label{sec:Whittle:asymptoticNormal}

	For the asymptotic normality of the Whittle estimator some further assumptions are required.

	\begin{assumptionletterB}~\label{AssB}\\[2mm]
			$\mbox{}$\,\,(B1) \,\, The true parameter value $\vartheta_0$ is in the interior of $\Theta$.\\[1mm]
			$\mbox{}$\,\,(B2) \,\, $\E\|L_1\|^4<\infty.$\\[1mm]
			$\mbox{}$\,\,(B3) \,\, The functions $\vartheta \mapsto A(\vartheta),\ \vartheta\mapsto B(\vartheta)$, $\vartheta\mapsto C(\vartheta)$ and $\vartheta \mapsto \Sigma_L(\vartheta)$ are three times continuously\\
    $\mbox{}$\hspace*{1cm}differentiable.\\[1mm]
			$\mbox{}$\,\,(B4) \,\, For any $c \in \C^r$, there exists an $\omega^* \in [-\pi,\pi]$ such that $\nabla_\vartheta \f(\omega^*,\vartheta_0)c \neq 0_{m^2}.$
	\end{assumptionletterB}
	
	\begin{remark}\label{BemAsyNorm}~
		Due to representation \eqref{spec} of the spectral density, under Assumption A and $(B3)$ the mapping $\vartheta\mapsto \f(\omega,\vartheta)$ is three times continuously differentiable.
	\end{remark}
	
	The proof of the asymptotic normality of the Whittle estimator is based on a Taylor expansion
	of $\nabla_\vartheta W_n$  around $\widehat \vartheta_n^{(\Delta)}$ in $\vartheta_0$, i.e.,
	\begin{align}\label{taylor}
	\sqrt{n} \left[\nabla_\vartheta W_n( \vartheta_0)\right]=\sqrt n \left[\nabla_\vartheta W_n(\widehat \vartheta_n^{(\Delta)})\right]-\sqrt n (\widehat \vartheta^{(\Delta)}_n-\vartheta_0)^\top\left[\nabla_\vartheta^2 W_n(\vartheta^*_n)\right]
	\end{align}
	for an appropriate $\vartheta_n^*\in\Theta$ with $\|\vartheta_n^*-\vartheta_0\|\leq \|\widehat\vartheta^{(\Delta)}_n-\vartheta_0\|$.
	Since $\widehat \vartheta_n^{(\Delta)}$ minimizes $W_n$ and converges almost surely to $\vartheta_0$, which is in the interior of $\Theta$ (Assumption $(B1)$), $\nabla_\vartheta W_n(\widehat \vartheta_n^{(\Delta)})=0$.
	Hence, in the case of an invertible matrix $\nabla_\vartheta^2 W_n(\vartheta^*_n)$ we can rewrite  \eqref{taylor} and obtain
	\begin{align}\label{taylor1}
	\sqrt n (\widehat \vartheta^{(\Delta)}_n-\vartheta_0)^\top=-\sqrt{n} \left[\nabla_\vartheta W_n( \vartheta_0)\right]\left[\nabla_\vartheta^2 W_n(\vartheta^*_n)\right]^{-1}.
	\end{align}
	Therefore, we receive the asymptotic normality of the Whittle estimator from the asymptotic behavior of the individual components in \eqref{taylor1}.
	First, we investigate the asymptotic behavior of the Hessian matrix $\nabla_\vartheta^2 W_n(\vartheta^*_n)$.
	
	\begin{proposition}\label{remarkunif}
		Let Assumptions $(A1)$--$(A4)$ and $(B3)$ hold and
		\beam  \label{Sigma_2}
		\Sigma_{\nabla^2 W}=\frac{1}{2\pi}\int_{-\pi}^{\pi} \nabla_\vartheta \f(-\omega,\vartheta_0)^{\top}\left[\f(-\omega)^{-1}\otimes \f(\omega)^{-1}\right]\nabla_\vartheta\f(\omega,\vartheta_0)  d\omega.
		\eeam
		Furthermore, let $(\vartheta_n^*)_{n\in \N}$ be a sequence in $\Theta$ with $\vartheta_n^*\overset{a.s.}{\longrightarrow}\vartheta_0$ as $n\to\infty$. Then, as $n\to\infty$,
		$$ \nabla^2_\vartheta W_n(\vartheta^*_n)\overset{a.s.}{\longrightarrow}\Sigma_{\nabla^2 W}.$$
	\end{proposition}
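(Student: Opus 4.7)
The plan is to establish uniform almost sure convergence of $\nabla_\vartheta^2 W_n(\vartheta)$ to a continuous deterministic limit on all of $\Theta$, then to combine this with the continuity of the limit and $\vartheta_n^*\overset{a.s.}{\to}\vartheta_0$ to replace $\vartheta_n^*$ by $\vartheta_0$, and finally to identify the value of the limit at $\vartheta_0$ with $\Sigma_{\nabla^2 W}$ in \eqref{Sigma_2}.

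Under Assumption A and $(B3)$, representation~\eqref{spec} together with \Cref{BemAsyNorm} shows that $(\omega,\vartheta)\mapsto \f(\omega,\vartheta)^{-1}$ and $(\omega,\vartheta)\mapsto\log\det\f(\omega,\vartheta)$ and all their partial $\vartheta$-derivatives up to order three are jointly continuous on the compact set $[-\pi,\pi]\times\Theta$. Differentiating $W_n$ twice under the finite sum one obtains
\begin{align*}
\nabla_\vartheta^2 W_n(\vartheta) = \frac{1}{2n}\sum_{j=-n+1}^{n}\nabla_\vartheta^2\operatorname{tr}\!\left(\f(\omega_j,\vartheta)^{-1}I_n(\omega_j)\right)+ \frac{1}{2n}\sum_{j=-n+1}^{n}\nabla_\vartheta^2\log\det\f(\omega_j,\vartheta).
\end{align*}
The second, purely deterministic, sum is a Riemann approximation of a jointly continuous integrand and therefore converges uniformly in $\vartheta\in\Theta$ to $\frac{1}{2\pi}\int_{-\pi}^{\pi}\nabla_\vartheta^2\log\det\f(\omega,\vartheta)\,d\omega$. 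Each entry of the first, stochastic, sum is a Riemann-type sum of the periodogram weighted by a member of the $\vartheta$-indexed family of jointly continuous functions $\omega\mapsto\partial^2_{\vartheta_i\vartheta_k}\f(\omega,\vartheta)^{-1}$. Expressing such a sum through the empirical autocovariance $\overline{\Gamma}_n$, applying the ergodic theorem to the causal MA$(\infty)$ representation of $Y^{(\Delta)}$ granted by $(A2)$--$(A3)$, and exploiting equicontinuity of this family on the compact parameter set --- exactly as in the proof of \Cref{Hilfssatz1} --- yields uniform a.s. convergence of the first sum to $\frac{1}{2\pi}\int_{-\pi}^{\pi}\operatorname{tr}\!\left(\nabla_\vartheta^2\f(\omega,\vartheta)^{-1}\cdot\f(\omega)\right)d\omega$. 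Dominated convergence allows one to pull the two $\vartheta$-derivatives under the integral defining $W$ in \Cref{Hilfssatz1}, so the sum of these two limits equals $\nabla_\vartheta^2 W(\vartheta)$, and we obtain
\[
\sup_{\vartheta\in\Theta}\|\nabla_\vartheta^2 W_n(\vartheta)-\nabla_\vartheta^2 W(\vartheta)\|\overset{n\to\infty}{\longrightarrow} 0\quad\P\text{-a.s.}
\]
Since $\nabla_\vartheta^2 W$ is continuous on $\Theta$ and $\vartheta_n^*\overset{a.s.}{\to}\vartheta_0$, this immediately gives $\nabla_\vartheta^2 W_n(\vartheta_n^*)\overset{a.s.}{\longrightarrow}\nabla_\vartheta^2 W(\vartheta_0)$.

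It remains to verify $\nabla_\vartheta^2 W(\vartheta_0)=\Sigma_{\nabla^2 W}$. Writing $f=\f(\omega,\vartheta)$ and $f_0=\f(\omega)$, differentiating twice under the integral and using $\partial_{\vartheta_j}f^{-1}=-f^{-1}(\partial_{\vartheta_j}f)f^{-1}$ together with $\partial^2_{\vartheta_i\vartheta_j}\log\det f=-\operatorname{tr}(f^{-1}(\partial_{\vartheta_i}f)f^{-1}(\partial_{\vartheta_j}f))+\operatorname{tr}(f^{-1}\partial^2_{\vartheta_i\vartheta_j}f)$, one finds that at $\vartheta=\vartheta_0$ the identity $f^{-1}f_0=I_m$ makes the contributions involving $\partial^2_{\vartheta_i\vartheta_j}f$ cancel between the two summands, leaving
\begin{align*}
[\nabla_\vartheta^2 W(\vartheta_0)]_{ik}=\frac{1}{2\pi}\int_{-\pi}^{\pi}\operatorname{tr}\!\left(\f(\omega)^{-1}\partial_{\vartheta_i}\f(\omega,\vartheta_0)\,\f(\omega)^{-1}\partial_{\vartheta_k}\f(\omega,\vartheta_0)\right)d\omega.
\end{align*}
Rewriting the trace via $\operatorname{tr}(AXBY)=\vecc(X^\top)^\top(A^\top\otimes B)\vecc(Y)$ and using the symmetries $\f(\omega)^\top=\f(-\omega)$ and $\partial_{\vartheta_i}\f(\omega)^\top=\partial_{\vartheta_i}\f(-\omega)$ (which follow from $\Gamma_Y^{(\Delta)}(-h)=\Gamma_Y^{(\Delta)}(h)^\top$) converts the integrand into the $(i,k)$ entry of $\nabla_\vartheta\f(-\omega,\vartheta_0)^\top\left[\f(-\omega)^{-1}\otimes\f(\omega)^{-1}\right]\nabla_\vartheta\f(\omega,\vartheta_0)$, which matches~\eqref{Sigma_2}. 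The principal obstacle is the uniform-in-$\vartheta$ a.s. convergence of the stochastic sum: it requires promoting the argument of \Cref{Hilfssatz1} from $\f(\cdot,\vartheta)^{-1}$ itself to its second $\vartheta$-derivatives, which is possible because $(B3)$ provides precisely the needed joint continuity on $[-\pi,\pi]\times\Theta$.
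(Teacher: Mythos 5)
Your proposal is correct and follows essentially the same route as the paper: uniform a.s.\ convergence of $\nabla^2_\vartheta W_n$ by the same argument as \Cref{Hilfssatz1} applied to the second $\vartheta$-derivatives of $\f(\cdot,\vartheta)^{-1}$, followed by identifying $\nabla^2_\vartheta W(\vartheta_0)$ with $\Sigma_{\nabla^2 W}$ via the derivative of the inverse, Jacobi's formula, the cancellation of the $\partial^2_{\vartheta_i\vartheta_j}\f$ terms at $\vartheta_0$, and the vec-trace identity \eqref{vectrace}. The only difference is that you make explicit two steps the paper leaves implicit (the interchange of differentiation and integration, and passing from uniform convergence plus continuity to convergence along $\vartheta_n^*$), which is fine.
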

	
	Further, we require that for large $n$ the random matrix $\nabla_\vartheta^2 W_n(\vartheta^*_n)$ is invertible.
	Therefore, we show the positive definiteness of the limit matrix $\Sigma_{\nabla^2 W}$.
	
	\begin{lemma} \label{positiveSigmanabla2}
		Let Assumptions A and $(B4)$ hold.
		Then,  $\Sigma_{\nabla^2 W}$ is positive definite.
	\end{lemma}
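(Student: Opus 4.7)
The plan is to show $c^\top \Sigma_{\nabla^2 W} c > 0$ for every $c \in \R^r \setminus \{0\}$. First I would use the identities $\vecc(AXB)=(B^\top\otimes A)\vecc(X)$ and $\vecc(U)^\top\vecc(V)=\tr(U^\top V)$ to rewrite the integrand pointwise. Setting $G(\omega):=\sum_{k=1}^r c_k\,\partial_{\vartheta_k}\f(\omega,\vartheta_0)\in\C^{m\times m}$, so that $\nabla_\vartheta\f(\omega,\vartheta_0)c=\vecc(G(\omega))$, the integrand of $c^\top\Sigma_{\nabla^2 W}c$ becomes
$$\tr\bigl(G(-\omega)^\top\f(\omega)^{-1}G(\omega)\f(-\omega)^{-\top}\bigr).$$

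Next I would exploit Hermitian symmetry of the spectral density. Since $Y$ is real-valued one has $\f(-\omega)=\overline{\f(\omega)}$, and because $\f(\omega)$ is Hermitian this gives $\f(-\omega)^{-\top}=\f(\omega)^{-1}$. Differentiating $\f(-\omega,\vartheta)=\overline{\f(\omega,\vartheta)}$ with respect to the real parameter $\vartheta$ at $\vartheta_0$ yields $G(-\omega)=\overline{G(\omega)}$, and hence $G(-\omega)^\top=G(\omega)^H$. The integrand therefore collapses to
$$\tr\bigl(G(\omega)^H\f(\omega)^{-1}G(\omega)\f(\omega)^{-1}\bigr)=\tr\bigl(H(\omega)^H H(\omega)\bigr)\geq 0,$$
where $H(\omega):=\f(\omega)^{-1/2}G(\omega)\f(\omega)^{-1/2}$ and $\f(\omega)^{-1/2}$ denotes the Hermitian positive square root. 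This square root exists because $\f(\omega)$ is Hermitian positive definite, which follows from representation~\eqref{specY} combined with positive definiteness of $V^{(\Delta)}(\vartheta_0)$ (cf.\ \Cref{invertible}(c) and \Cref{identifizierbar}(a)) and invertibility of $\Pi^{-1}(e^{\pm i\omega},\vartheta_0)$.

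Finally, fix $c\in\R^r\setminus\{0\}$. By Assumption $(B4)$ there is $\omega^*\in[-\pi,\pi]$ with $\nabla_\vartheta\f(\omega^*,\vartheta_0)c\neq 0$, i.e.\ $G(\omega^*)\neq 0$; invertibility of $\f(\omega^*)^{-1/2}$ forces $H(\omega^*)\neq 0$ and hence $\tr(H(\omega^*)^H H(\omega^*))>0$. Since the $\omega$-dependence of $\f$ in \eqref{spec} is smooth and $\nabla_\vartheta\f(\cdot,\vartheta_0)$ is continuous in $\omega$ (which is already implicit in the well-posedness of $\Sigma_{\nabla^2 W}$), the non-negative integrand $\tr(H(\omega)^H H(\omega))$ is strictly positive on an open neighborhood of $\omega^*$, whence $c^\top\Sigma_{\nabla^2 W}c>0$.

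The only substantive step is the Kronecker-to-trace algebra combined with the symmetry $\f(-\omega)=\overline{\f(\omega)}$; once the integrand has been identified with a non-negative Hermitian quadratic form, the conclusion follows from a routine continuity-plus-identifiability argument, and I foresee no genuine obstacle.
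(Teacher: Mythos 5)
Your proposal is correct and follows essentially the same route as the paper: rewrite $c^\top\Sigma_{\nabla^2 W}c$ using the Hermitian symmetry $\f(-\omega,\vartheta)=\overline{\f(\omega,\vartheta)}$ so that the integrand becomes a non-negative quadratic form (the paper writes it as $\|(\f(-\omega)^{-1/2}\otimes\f(\omega)^{-1/2})\nabla_\vartheta\f(\omega,\vartheta_0)c\|_2^2$, which is exactly your $\tr(H(\omega)^H H(\omega))$ via the vec--Kronecker correspondence), and then invoke $(B4)$ together with continuity in $\omega$ to get strict positivity on a neighborhood of $\omega^*$. Your write-up merely makes explicit the trace identity and the square-root factorization that the paper leaves implicit.
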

	
	\begin{remark}
		For Gaussian state space processes
		\begin{align*}
			J=&\left[2\E\left[\left(\frac{\partial}{\partial \vartheta_i}\varepsilon_1^{(\Delta)}(\vartheta_0)\right)^\mathsf{T} V^{(\Delta)-1} \left(\frac{\partial}{\partial \vartheta_j}\varepsilon^{(\Delta)}_1(\vartheta_0)\right)\right]\right.
			\\ &\quad\quad
			\left.+\tr\left(\left(\frac{\partial}{\partial \vartheta_i}V^{(\Delta)}({\vartheta_0})\right)V^{(\Delta)-1}
		\left( \frac{\partial}{\partial \vartheta_j}V^{(\Delta)}({\vartheta_0})\right)V^{(\Delta)-1}\right)\right]_{i,j=1,\ldots,r}
		\end{align*}
		is the Fisher information matrix (cf. \cite{QMLE}). Since $W(\vartheta)=\mathcal{L}(\vartheta)$ due to \Cref{schritt2}, and
		$\nabla_\vartheta\f(\omega,\vartheta)$ is uniformly bounded by an integrated dominant,  we get
		by some straightforward applications of dominated convergence and some arguments of the proof of  \cite{QMLE}, Lemma~2.17,
		that
		\beao
		J[{i,j}]&=\lim_{n\to\infty}\E\left[\frac{\partial}{\partial \vartheta_i}\frac{\partial}{\partial \vartheta_j}\mathcal L_n(\vartheta_0)\right]=\frac{\partial}{\partial \vartheta_i}\frac{\partial}{\partial \vartheta_j}\lim_{n\to\infty}\E[
		\mathcal L_n(\vartheta_0)]
		\\
		&=\frac{\partial}{\partial \vartheta_i}\frac{\partial}{\partial \vartheta_j}W(\vartheta_0)=\lim_{n\to\infty}\E\left[\frac{\partial}{\partial \vartheta_i}\frac{\partial}{\partial \vartheta_j} W_n(\vartheta_0)\right]=\Sigma_{\nabla^2 W}[i,j],
		\eeao
		where $\mathcal{L}_n(\vartheta)$ is the quasi-Gaussian likelihood function.
		Furthermore, \cite{QMLE},\\ Lemma~2.17, show that if  Assumption A holds
		and  if there exists an $j_0\in\N$ such that the $((j_0+2)m^2)\times r$-matrix
		\begin{align*}
		\nabla\left[\begin{array}{c c}
		&\left[I_{j_0+1}\otimes K^{(\Delta)}(\vartheta_0)^\top\otimes C(\vartheta_0)\right] \left[\left(\vecc\left( e^{I_N\Delta}\right)\right)^\top\left(\vecc\left( e^{A(\vartheta_0)\Delta}\right)\right)^\top\cdots \left(\vecc\left( e^{A^j(\vartheta_0)\Delta}\right)\right)^\top\right]^\top\\
		& \vecc\left(V^{(\Delta)}(\vartheta_0)\right)\end{array}\right]
		\end{align*}
		has rank $r$,
		then the matrix $J$ is positive definite. Thus, our assumption (B4) can be replaced
		by this condition.		
	\end{remark}
	
	Next, we investigate the asymptotic behavior of the second term in \eqref{taylor1}.
	Since the components of the score $\nabla_\vartheta W_n(\vartheta_0)$ can be written as an integrated periodogram,
	we first derive the asymptotic behavior of the integrated periodogram and state the
	asymptotic normality afterwards.

	\begin{proposition}\label{Prop2}~
		Let Assumptions $(A2)$--$(A4)$ and $(B2)$ hold.
		Suppose $\eta:[-\pi,\pi]\to \C^{m\times m}$ is a symmetric matrix-valued continuous function with Fourier coefficients $(\mathfrak{f}_u)_{u\in \Z}$ satisfying \linebreak
		$\sum_{u=-\infty}^{\infty} \|\mathfrak{f}_u\||u|^{1/2}<\infty$.
		Then, as $n\to \infty$,
		$$\frac{1}{2\sqrt n}\sum_{j=-n+1}^{n}\tr\left(\eta(\omega_j)I_n(\omega_j)- \eta(\omega_j)\f(\omega_j) \right) \overset{\mathcal D}{\longrightarrow}\mathcal N(0,\Sigma_\eta),$$ where \begin{eqnarray*}
			\Sigma_\eta
			&=&\frac{1}{\pi}\int_{-\pi}^{\pi}\tr\left(\eta(\omega)\f(\omega)\eta(\omega) \f(\omega)\right)d\omega+ \frac{1}{16\pi^4}\int_{-\pi}^{\pi}\vecc\left(\Phi(e^{-i\omega})^\top\eta(\omega)^\top\Phi(e^{i\omega})\right)^\top d\omega\\
			&&\left(\E\left[N_1^{(\Delta)}N_1^{(\Delta)\top}\otimes N_1^{(\Delta)}N_1^{(\Delta)\top}\right]-3\Sigma_N^{(\Delta)}\otimes \Sigma_N^{(\Delta)}\right)\int_{-\pi}^{\pi}\vecc\left(\Phi(e^{i\omega})^\top\eta(\omega)\Phi(e^{-i\omega})\right)d\omega.
		\end{eqnarray*}
	\end{proposition}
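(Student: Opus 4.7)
The plan is to exploit the MA$(\infty)$ representation $Y_k^{(\Delta)}=\sum_{j\ge 0}\Phi_j N_{k-j}^{(\Delta)}$ with exponentially decaying coefficients $\Phi_j=Ce^{A\Delta j}$ and i.i.d.\ innovations $(N_k^{(\Delta)})_{k\in\Z}$, which by $(B2)$ possess finite fourth moments. My strategy is to reduce the integrated periodogram of $Y^{(\Delta)}$ to one of the i.i.d.\ innovation sequence $N^{(\Delta)}$ carrying a modified weight function, and then invoke a central limit theorem for weighted sample autocovariances of i.i.d.\ vectors with finite fourth moments. Concretely, I set $J_n^Y(\omega):=\frac{1}{\sqrt{2\pi n}}\sum_{k=1}^n Y_{k\Delta}e^{-ik\omega}$ and $J_n^N(\omega):=\frac{1}{\sqrt{2\pi n}}\sum_{k=1}^n N_k^{(\Delta)}e^{-ik\omega}$; rearranging the MA double sum yields $J_n^Y(\omega)=\Phi(e^{-i\omega})J_n^N(\omega)+R_n(\omega)$, where the boundary remainder $R_n$ is uniformly small in $\omega$ by the exponential decay of $\|\Phi_j\|$.

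Writing $\widetilde\eta(\omega):=\Phi(e^{i\omega})^\top\eta(\omega)\Phi(e^{-i\omega})$ and using the spectral identity $\f(\omega)=\frac{1}{2\pi}\Phi(e^{-i\omega})\Sigma_N^{(\Delta)}\Phi(e^{i\omega})^\top$ from \eqref{spec}, the target would become
\begin{align*}
S_n:=\frac{1}{2\sqrt{n}}\sum_{j=-n+1}^{n}\tr\bigl(\eta(\omega_j)[I_n(\omega_j)-\f(\omega_j)]\bigr)=\frac{1}{2\sqrt{n}}\sum_{j=-n+1}^{n}\tr\!\Bigl(\widetilde\eta(\omega_j)\bigl[I_n^N(\omega_j)-\tfrac{\Sigma_N^{(\Delta)}}{2\pi}\bigr]\Bigr)+o_{\P}(1),
\end{align*}
where $I_n^N$ is the periodogram of $N^{(\Delta)}$ and the displayed approximation is obtained by Cauchy--Schwarz control of the cross terms involving $R_n$. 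The reduced statistic, via Parseval's identity and the Fourier expansion of $\widetilde\eta$, becomes a weighted linear combination (in the lag $u$) of the sample autocovariances $\tfrac{1}{n}\sum_k(N_k^{(\Delta)}N_{k+u}^{(\Delta)\top}-\1_{\{u=0\}}\Sigma_N^{(\Delta)})$ of the i.i.d.\ innovations. A truncation in $u$ together with a direct CLT for such quadratic functionals, via a method-of-moments/cumulant computation as in Hannan's classical treatment, yields asymptotic normality with a variance that naturally splits into a ``Gaussian'' piece $\frac{1}{\pi}\int\tr\bigl(\widetilde\eta(\omega)\tfrac{\Sigma_N^{(\Delta)}}{2\pi}\widetilde\eta(\omega)\tfrac{\Sigma_N^{(\Delta)}}{2\pi}\bigr)d\omega$, which by the spectral identity equals the first term in $\Sigma_\eta$, and a fourth-cumulant piece driven by $\E[N_1^{(\Delta)}N_1^{(\Delta)\top}\otimes N_1^{(\Delta)}N_1^{(\Delta)\top}]-3\Sigma_N^{(\Delta)}\otimes\Sigma_N^{(\Delta)}$, which matches the second term by a Kronecker--vectorization bookkeeping.

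The main obstacle will be controlling the remainder $R_n$ after $1/\sqrt{n}$ scaling and summation over $n$ Fourier frequencies, together with showing negligibility of the bias $\frac{1}{\sqrt{n}}\sum_j\tr(\eta(\omega_j)[\E I_n(\omega_j)-\f(\omega_j)])$. The hypothesis $\sum_u\|\mathfrak{f}_u\||u|^{1/2}<\infty$ is precisely the strength needed to bound the variance of these cross contributions: the $|u|^{1/2}$-weight controls the lag-weighted sums $\sum_u\|\mathfrak f_u\|\,\|\overline\Gamma_n(u)-\Gamma_Y^{(\Delta)}(u)\|$, whose variance would otherwise blow up, and it also handles the lag-wise bias $\E\overline\Gamma_n(u)-\Gamma_Y^{(\Delta)}(u)=O(|u|/n)$, making its total contribution $o(1)$ after $1/\sqrt{n}$ scaling. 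Finally, letting the truncation level in $u$ tend to infinity is justified by the same weighted summability, which provides a uniform variance bound on the truncated approximations and closes the argument.
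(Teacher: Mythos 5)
Your proposal follows essentially the same route as the paper's proof: the paper first approximates $I_n$ by $\Phi(e^{-i\omega})I_{n,N}(\omega)\Phi(e^{i\omega})^\top$ (its Lemma on the remainder $R_n$), then Fourier-expands $q(\omega)=\Phi(e^{i\omega})^\top\eta(\omega)\Phi(e^{-i\omega})$, truncates at level $M$, reduces to weighted sample autocovariances of the i.i.d.\ innovations, applies a CLT for these with the Gaussian-plus-fourth-cumulant variance split, and closes by letting $M\to\infty$ via the $\sum_u\|\mathfrak f_u\||u|^{1/2}<\infty$ condition. Your outline is correct and matches this argument in all essential steps.
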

	
	The asymptotic behavior of the integrated periodogram is interesting for its own. It can be modified to derive goodness
	of fit tests for state space models which are continuous functionals of the integrated periodogram (cf. \cite{Priestley:1981}).
	
	\begin{remark} \label{Remark:3.11}
		Let the driving L\'evy process be a Brownian motion. Since the fourth moment of a centered normal distribution is equal to three times its second moment and $N_1^{(\Delta)}\overset{\mathcal{D}}{\sim} \mathcal{N}(0,\Sigma_N^{(\Delta)}),$ we get $\E[	N_1^{(\Delta)}N_1^{(\Delta)\top}\otimes N_1^{(\Delta)}N_1^{(\Delta)\top}]=3\Sigma_N^{(\Delta)}\otimes \Sigma_N^{(\Delta)}.$  Therefore, the matrix $\Sigma_\eta$ in \Cref{Prop2}
		reduces to
		\begin{eqnarray*}
			\Sigma_\eta
			&=&\frac{1}{\pi}\int_{-\pi}^{\pi}\tr\left(\eta(\omega)\f(\omega)\eta(\omega) \f(\omega)\right)d\omega,
		\end{eqnarray*}
		which is for $m=1$ equal to $\Sigma_\eta =\frac{1}{\pi}\int_{-\pi}^{\pi} \eta(\omega)^2\f(\omega)^2 d\omega.$
	\end{remark}
	
	Finally, we obtain the asymptotic behavior of the score function.
	
	\begin{proposition}\label{abl1W}
		Let Assumptions $(A2)$--$(A4)$ and $(B2)$--$(B3)$ hold. Define
		\begin{eqnarray}  \label{SigmaWnabla}
		\lefteqn{\Sigma_{\nabla W}=\frac{1}{\pi}\int_{-\pi}^{\pi} \nabla_\vartheta \f(-\omega,\vartheta_0)^{\top}\left[\f(-\omega)^{-1}\otimes \f(\omega)^{-1}\right]\nabla_\vartheta\f(\omega,\vartheta_0)  d\omega} \nonumber\\
		&&\qquad +  {\frac{1}{16\pi^4}\left[\int_{-\pi}^{\pi}\left[\Phi(e^{i\omega})^\top \f(\omega)^{-1}\otimes \Phi(e^{-i\omega})^\top \f(-\omega)^{-1} \right]\nabla_\vartheta\f(-\omega,\vartheta_0) d\omega\right]^\top} \nonumber\\
		&&\qquad\qquad\qquad \cdot  {\left[\E\left[N_1^{(\Delta)}N_1^{(\Delta)\top}\otimes N_1^{(\Delta)}N_1^{(\Delta)\top}\right]-3\Sigma_N^{(\Delta)}\otimes \Sigma_N^{(\Delta)}\right]}\nonumber \\
		&&\qquad\qquad\qquad \cdot {\left[\int_{-\pi}^{\pi}\left[\Phi(e^{-i\omega})^{\top}\f(-\omega)^{-1}\otimes \Phi(e^{i\omega})^\top \f(\omega)^{-1} \right]\nabla_\vartheta\f(\omega,\vartheta_0) d\omega\right]}.
		\end{eqnarray}
		Then, as $n\to \infty$, $$\sqrt n \left[\nabla_\vartheta W_n(\vartheta_0)\right]\overset{\mathcal D}{\longrightarrow} \mathcal N(0,\Sigma_{\nabla W}).$$
	\end{proposition}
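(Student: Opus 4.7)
The strategy is to rewrite each component of the score $\sqrt{n}\nabla_\vartheta W_n(\vartheta_0)$ as an integrated periodogram of the form handled by \Cref{Prop2}. Using the matrix identities $\partial_{\vartheta_k}\tr(f^{-1}I)=-\tr(f^{-1}(\partial_{\vartheta_k}f)f^{-1}I)$ and $\partial_{\vartheta_k}\log\det f=\tr(f^{-1}\partial_{\vartheta_k}f)$, one checks that
$$\sqrt{n}\,\frac{\partial W_n}{\partial\vartheta_k}(\vartheta_0)=-\frac{1}{2\sqrt{n}}\sum_{j=-n+1}^{n}\tr\bigl(\eta_k(\omega_j)\bigl[I_n(\omega_j)-\f(\omega_j)\bigr]\bigr),$$
where $\eta_k(\omega):=\f(\omega)^{-1}\bigl(\partial_{\vartheta_k}\f(\omega,\vartheta_0)\bigr)\f(\omega)^{-1}$ for $k=1,\ldots,r$. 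The centering by $\f(\omega_j)$ inside the trace comes from rewriting $\tr(\eta_k f)=\tr(f^{-1}\partial_{\vartheta_k}f)$.

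Next I would invoke the Cramér–Wold device. For arbitrary $c=(c_1,\ldots,c_r)^\top\in\R^r$, set $\eta_c:=\sum_{k=1}^r c_k\eta_k$, so that $\sqrt{n}\,c^\top\nabla_\vartheta W_n(\vartheta_0)=-\tfrac{1}{2\sqrt{n}}\sum_j\tr(\eta_c(\omega_j)[I_n(\omega_j)-\f(\omega_j)])$. To apply \Cref{Prop2} with $\eta=\eta_c$ I must check its hypotheses: continuity and Hermitian symmetry of $\eta_c$ follow from (B3) together with $\f$ being Hermitian and the eigenvalues of $A(\vartheta_0)$ having strictly negative real parts, and the Fourier-coefficient condition $\sum_u\|\mathfrak f_u\||u|^{1/2}<\infty$ follows from the fact that $\eta_c(\omega)$ is a rational function in $e^{i\omega}$ with no poles on the unit circle (by (A3) and the explicit representation \eqref{spec}), hence has exponentially decaying Fourier coefficients. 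Applying \Cref{Prop2} then gives $\sqrt{n}\,c^\top\nabla_\vartheta W_n(\vartheta_0)\xrightarrow{\mathcal D}\mathcal N(0,\Sigma_{\eta_c})$, and since $c$ is arbitrary this delivers the joint asymptotic normality.

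The remaining step is to verify $\Sigma_{\eta_c}=c^\top\Sigma_{\nabla W}c$ term by term. For the ``Gaussian'' term, using $\vecc(ABC)=(C^\top\otimes A)\vecc(B)$ together with the reflection identities $\f(-\omega)=\f(\omega)^\top$ and $\partial_{\vartheta_k}\f(-\omega)=(\partial_{\vartheta_k}\f(\omega))^\top$ one obtains
$$\vecc\bigl(\partial_{\vartheta_k}\f(-\omega)\bigr)^\top\bigl[\f(-\omega)^{-1}\otimes\f(\omega)^{-1}\bigr]\vecc\bigl(\partial_{\vartheta_\ell}\f(\omega)\bigr)=\tr\bigl(\f^{-1}\partial_{\vartheta_k}\f\cdot\f^{-1}\partial_{\vartheta_\ell}\f\bigr),$$
which after summation against $c_kc_\ell$ reproduces $\tfrac{1}{\pi}\int\tr(\eta_c\f\eta_c\f)\,d\omega$. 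For the kurtosis term, the same Kronecker identity together with $\eta_c(\omega)^\top=\eta_c(-\omega)$ and the change of variables $\omega\mapsto-\omega$ shows
$$\int_{-\pi}^{\pi}\vecc\bigl(\Phi(e^{i\omega})^\top\eta_c(\omega)\Phi(e^{-i\omega})\bigr)d\omega=\sum_{k=1}^r c_k\int_{-\pi}^{\pi}\bigl[\Phi(e^{i\omega})^\top\f(\omega)^{-1}\otimes\Phi(e^{-i\omega})^\top\f(-\omega)^{-1}\bigr]\nabla_\vartheta\f(-\omega,\vartheta_0)\,d\omega\cdot e_k,$$
so the outer-product structure involving $\E[N_1^{(\Delta)}N_1^{(\Delta)\top}\otimes N_1^{(\Delta)}N_1^{(\Delta)\top}]-3\Sigma_N^{(\Delta)}\otimes\Sigma_N^{(\Delta)}$ in \Cref{Prop2} polarizes into exactly the second summand of $\Sigma_{\nabla W}$. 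The main obstacle is this final bookkeeping: keeping the $\vecc/\otimes$ rearrangements, the reflection symmetry, and the transposition-versus-conjugation conventions consistent, especially because the Kronecker cross-moment couples the $N$-dimensional state-noise indices with the $m^2$-dimensional output-spectrum indices through the transfer function $\Phi(e^{\pm i\omega})$.
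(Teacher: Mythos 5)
Your proposal is correct and follows essentially the same route as the paper: write the score as a weighted integrated periodogram centered at $\f(\omega_j)$ with weight $\eta_\lambda(\omega)=-\sum_t\lambda_t\f(\omega)^{-1}(\partial_{\vartheta_t}\f(\omega,\vartheta_0))\f(\omega)^{-1}$, apply \Cref{Prop2} via the Cram\'er--Wold device, and match the limiting variance to $\Sigma_{\nabla W}$ through the $\vecc$/Kronecker identity \eqref{vectrace}. The only cosmetic difference is that you justify the Fourier-coefficient summability of $\eta_c$ by its rationality in $e^{i\omega}$ (geometric decay), whereas the paper invokes twice continuous differentiability; both are valid.
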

	
	Now, we are able to present the main result of this paper, the asymptotic normality of the Whittle estimator.
	
	\begin{theorem1}\label{SatzAsyN}
		Let Assumptions $A$ and $B$ hold. Furthermore, let $\Sigma_{\nabla W}$ be defined as in  \eqref{SigmaWnabla}
		and $\Sigma_{\nabla^2 W}$ be defined as in \eqref{Sigma_2}.
		Then, as $n\to\infty$,
		$$\sqrt{n}\left(\widehat{\vartheta}_n^{(\Delta)}-\vartheta_0\right)\overset{\mathcal D}{\longrightarrow}\mathcal{N} (0,\Sigma_W),$$ where $\Sigma_W$ has the representation $\Sigma_W=[\Sigma_{\nabla^2 W}]^{-1}\Sigma_{\nabla W}[\Sigma_{\nabla^2 W}]^{-1}.$
	\end{theorem1}
	In contrast to the quasi maximum likelihood
	estimator of \cite{QMLE},  the limit covariance matrix of the Whittle estimator has an analytic representation.
	It can be used for the calculation of confidence bands.
	
	\begin{remark} \label{Remark 3.6}
		We want to compare our outcome with an analogue result for stationary discrete-time \linebreak VARMA$(p,q)$ processes $(Z_n)_{n\in\N}$  of the form \eqref{VARMA} with finite fourth moments.
		In our setting we have the drawback that the autoregressive
		and the moving average polynomial influence the covariance matrix $\Sigma_N^{(\Delta)}$ of $(N^{(\Delta)}_k)_{k\in\N_0}$.
		In the setting of stationary VARMA$(p,q)$ processes of \cite{DunsmuirHannan76}
		the covariance matrix $\Sigma_e$ of the white noise $(e_n)_{n\in\Z}$ is not affected by the AR and MA polynomials.
		It was shown in  \cite{DunsmuirHannan76} that under very general assumptions for $d=m$ the resulting limit covariance matrix of the Whittle estimator for the VARMA parameters has the representation
		\begin{eqnarray*}
			\Sigma_W^{\text{VARMA}}	=\left[\frac{1}{4\pi}\int_{-\pi}^{\pi}\nabla_\vartheta f_Z(-\omega,\vartheta_0)^{\top}\left[f_Z(-\omega)^{-1}\otimes f_Z(\omega)^{-1}\right]\nabla_\vartheta f_Z(\omega,\vartheta_0)d\omega\right]^{-1}
			=2 \cdot [\Sigma_{\nabla^2W}^{\text{VARMA}}]^{-1},
		\end{eqnarray*}
		which is simpler  than our $\Sigma_W$.
		This can be traced back to
		$\Sigma_{\nabla W}^{\text{VARMA}}=2\cdot \Sigma_{\nabla^2W}^{\text{VARMA}},$
		which is motivated on p.~\pageref{Subsection:Remark:3.6}.
		In particular, for a Gaussian VARMA model, $ \Sigma_W^{\text{VARMA}}$
		is the inverse of the Fisher information matrix.
	\end{remark}

	\begin{remark}~\label{Remark 3.15}
		\begin{enumerate}
			\item[(a)] Let the driving L\'evy process be a Brownian motion.  Due to \Cref{Remark:3.11}, the matrix $\Sigma_{\nabla W}$ reduces to
			\begin{align*}
			\Sigma_{\nabla W}=\frac{1}{\pi}\int_{-\pi}^{\pi}\nabla_\vartheta \f(-\omega,\vartheta_0)^{\top}\left[\f(-\omega)^{-1}\otimes \f(\omega)^{-1}\right]\nabla_\vartheta\f(\omega,\vartheta_0)d\omega= 2\cdot[\Sigma_{\nabla^2 W}]^{-1},
			\end{align*}
			and hence, $\Sigma_W=2\cdot [\Sigma_{\nabla W}]^{-1}$ is the inverse of the Fisher information matrix and corresponds
			to $\Sigma_W^{\text{VARMA}}$ as in the previously mentioned discrete-time VARMA setting.
			\item[(b)] Let $d=m=N$ and $C(\vartheta)=I_m$. Then, the state space model is a multivariate Ornstein-Uhlenbeck process (MCAR(1) process).
			In this example, $\Sigma_{\nabla W}=2\cdot[\Sigma_{\nabla^2 W}]^{-1}$ holds as well.  Because of $\Phi(z,\vartheta)=\sum_{j=0}^\infty e^{A(\vartheta)\Delta j}z^j=(1-e^{A(\vartheta)\Delta}z)^{-1}=\Pi^{-1}(z,\vartheta)$, the arguments are very similar to the arguments for VARMA models in  \Cref{Remark 3.6}.
		\end{enumerate}
	\end{remark}
	
	\section{The adjusted Whittle estimator} \label{sec:adjustedWhittleestimator}

	In the following, we solely consider state space models where $Y$ and $L$ are one-dimensional, i.e., $A\in \R^{N\times N}$, $B \in \R^{N\times 1}$ and $C\in \R^{1\times N}$. This includes, in particular, univariate CARMA processes, see, e.g., \cite{Brockwell:Lindner:2009,Brockwell:2014} for the explicit definition and existence criteria.
	Further, we assume that the variance parameter $\sigma^2_L$ of the driving L\'evy process does not depend on $\vartheta$ and has not to be estimated.
	In this context, we consider an adjusted Whittle estimator which takes into account that we do not have to estimate
	the variance. Such adjusted Whittle estimators are useful for the estimation
	of heavy tailed CARMA models with infinite variance. For example, \cite{Mikoschetal:1995} estimate the parameters
	of ARMA models in discrete time whose noise has a symmetric stable distribution. In some future work we will
	investigate such an adjusted Whittle estimator for heavy tailed models as well.
	
Now, the Whittle function is adapted in a way which makes it independent of the variance of the driving Lévy process.
Therefore, we use the representation of the spectral density in \eqref{specY}. 
Although the
variance $\sigma_L^2$ goes linearly in $\Omega^{(\Delta)}(\vartheta)$ and $V^{(\Delta)}(\vartheta)$, both $K^{(\Delta)}(\vartheta)$
and $\Pi(z,\vartheta)$ do not depend on $\sigma_L^2$ anymore.
The second summand of the Whittle function $W_n$ is removed and the first term is adjusted so that we obtain the \textit{adjusted Whittle function}
	\begin{align*}
	W^{(A)}_n(\vartheta)
	&=\frac{\pi}{n}\sum_{j=-n+1}^{n}|\Pi(e^{i\omega_j},\vartheta)|^{2}I_n(\omega_j)=\frac {V^{(\Delta)}(\vartheta)} {2n}\sum_{j=-n+1}^{n}\f(\omega_j,\vartheta)^{-1}I_n(\omega_j).
	\end{align*}
	The corresponding minimizer $$\widehat\vartheta^{(\Delta,A)}_n=\arg\min_{\vartheta \in \Theta}W^{(A)}_n(\vartheta)$$ is the \textit{adjusted Whittle estimator}.

	\subsection{Consistency of the adjusted Whittle estimator} \label{sec:adWhittle:consistency}
	
	Since the estimation procedure is different to that of the previous sections, we have to adjust Assumption~A.

\begin{assumptionletterC}
	Let Assumptions $(A1)$--$(A5)$ and $(A7)$ hold. Furthermore, assume\\[2mm]
	$\mbox{}$\,\,($\widetilde A$6) \,\, For any $\vartheta_1,\ \vartheta_2 \in \Theta$, $\vartheta_1\neq \vartheta_2$, there exists some $z\in\C$ with $|z|=1$ and $\Pi(z,\vartheta_1)\neq \Pi(z,\vartheta_2)$.
\end{assumptionletterC}
	
	It is needless to say that conditions as those for the function $\vartheta \to \sigma_L^2$ are fulfilled naturally.
	In addition to \Cref{identifizierbar}, which remains mostly applicable, we stress that,  under Assumption $\widetilde A$, $\Pi^{-1}$ as defined in \eqref{Pi-1} exists for all $\vartheta\in \Theta$ and that the mapping $(\omega,\vartheta)\to \Pi^{-1}(e^{i\omega},\vartheta)$ is continuous.

	\begin{theorem1}\label{consistency2}
		Let Assumption $\widetilde A$ hold. Then, as $n\to\infty$,
		$$\widehat\vartheta^{(\Delta,A)}_n\overset{a.s.}{\longrightarrow}\vartheta_0.$$
	\end{theorem1}
	The proof follows the same steps as the proof for the
	consistency of the Whittle estimator in \Cref{satz1}.
	
	\subsection{Asymptotic normality of the adjusted Whittle estimator} \label{sec:adWhittle:normality}
	
	For the asymptotic normality of the adjusted Whittle estimator we have to adapt Assumption B.
	
	\bigskip
	
\begin{assumptionletterD}
	Let Assumptions (B1)-(B3) hold. Furthermore, assume\\[2mm]
	$\mbox{}$\,\, ($\widetilde B$4) \,\, For any $c \in \C^r$ there exists an $\omega^* \in [-\pi,\pi]$ such that $\nabla_\vartheta |\Pi(e^{i\omega^*},\vartheta_0)|^{-2}c \neq 0.$
	\end{assumptionletterD}

	\begin{remark}\label{remarkadjWE}
		Under Assumption $\widetilde A$ and Assumption $\widetilde B$ the mapping $\vartheta\to \Pi(e^{i\omega},\vartheta)$ is three times continuously differentiable. Similarly to~\Cref{positiveSigmanabla2}, $(\widetilde B4)$ guarantees the invertibility of
		\beam \label{Sigma_2_a}
		\Sigma_{\nabla^2 W^{(A)}} :=\frac{V^{(\Delta)}}{2\pi}\int_{-\pi}^{\pi}\nabla_\vartheta \log\left(|\Pi(e^{i\omega},\vartheta_0)|^{-2}\right)^\top\nabla_\vartheta \log\left(|\Pi(e^{i\omega},\vartheta_0)|^{-2}\right) d\omega.
		\eeam
	\end{remark}

	\begin{theorem1}\label{normality2}
		Let Assumption $\widetilde A$ and $\widetilde B$ hold.
		Further, let $\Sigma_{\nabla^2 W^{(A)}}$ be defined as in \eqref{Sigma_2_a} and
		\begin{eqnarray*}
			\Sigma_{\nabla W^{(A)}}&=&\frac{V^{(\Delta)2}}{\pi}\int_{-\pi}^{\pi}\nabla_\vartheta \log\left(|\Pi(e^{i\omega},\vartheta_0)|^{-2}\right)^\top\nabla_\vartheta \log\left(|\Pi(e^{i\omega},\vartheta_0)|^{-2}\right) d\omega  \\
			&&+ \frac{1}{4\pi^2}\left[\int_{-\pi}^{\pi}\nabla_\vartheta|\Pi(e^{i\omega},\vartheta_0)|^{2\top}\left[\Phi(e^{i\omega})\otimes \Phi(e^{-i\omega}) \right] d\omega\right]\\ &&\quad\quad\cdot{\left[\E\left[N_1^{(\Delta)}N_1^{(\Delta)\top}\otimes N_1^{(\Delta)}N_1^{(\Delta)\top}\right]-3\Sigma_N^{(\Delta)}\otimes \Sigma_N^{(\Delta)}\right]}\\
			&&\quad\quad\cdot {\left[\int_{-\pi}^{\pi} \nabla_\vartheta|\Pi(e^{i\omega},\vartheta_0)|^{2\top}\left[\Phi(e^{-i\omega})\otimes \Phi(e^{i\omega}) \right] d\omega\right]^\top}.
		\end{eqnarray*}
		Then, as $n\to \infty$, $$\sqrt{n}\left(\widehat{\vartheta}_n^{(\Delta,A)}-\vartheta_0\right)\overset{\mathcal D}{\longrightarrow}\mathcal{N} (0,\Sigma_{W^{(A)}}),$$ where $\Sigma_{W^{(A)}}$ has the representation $\Sigma_{W^{(A)}}=[\Sigma_{\nabla^2 W^{(A)}}]^{-1}\Sigma_{\nabla W^{(A)}}[\Sigma_{\nabla^2 W^{(A)}}]^{-1}.$
	\end{theorem1}

	\begin{remark} For the one-dimensional CAR(1) (Ornstein-Uhlenbeck) process, for which $m=d=N=1$ and
			$C(\vartheta)=B(\vartheta)=1$ holds, the limit covariance matrix $\Sigma_{W^{(A)}}$ of ~\Cref{normality2} reduces due to \Cref{OU_univariat} in the Supplementary Material
			and Theorem 3'{}'{}', Chapter 3, of \cite{Hannanmult}  to
			\beao
			\Sigma_{W^{(A)}}&=&4\pi \left[\int_{-\pi}^{\pi}\nabla_\vartheta \log\left(|\Pi(e^{i\omega},\vartheta_0)|^{-2}\right)^\top\nabla_\vartheta \log\left(|\Pi(e^{i\omega},\vartheta_0)|^{-2}\right) d\omega\right]^{-1}\\
			&=&4\pi\left[\int_{-\pi}^{\pi}\nabla_\vartheta \log(\f(\omega, \vartheta_0))^\top\nabla_\vartheta\log(\f(\omega, \vartheta_0))d\omega\right.\\
			&&\left.-\frac 1 {2\pi}\left(\int_{-\pi}^{\pi}\nabla_\vartheta \log(\f(\omega, \vartheta_0))d\omega\right)^\top \left(\int_{-\pi}^{\pi}\nabla_\vartheta \log(\f(\omega, \vartheta_0))d\omega\right)\right]^{-1}.
		\end{eqnarray*}
		Due to \Cref{Remark 3.15} (b)
		\begin{eqnarray*}
			\Sigma_{W}&=&2\cdot[\Sigma_{\nabla^2 W}]^{-1}= 4\pi\left[\int_{-\pi}^{\pi}\nabla_\vartheta \log(\f(\omega, \vartheta_0))^\top\nabla_\vartheta\log(\f(\omega, \vartheta_0))d\omega\right]^{-1}
		\end{eqnarray*}
		and hence, $\Sigma_{W^{(A)}}\geq \Sigma_{W}$. Thus, the adjusted Whittle estimator has a higher variance than the Whittle estimator.
		 Let $\vartheta_0<0$ be the zero of the AR polynomial in the CAR(1) model, i.e., $A(\vartheta_0)=\vartheta_0$. Simple calculations show that $\Sigma_{W^{(A)}}=e^{-2\vartheta_0}-1$
		which is equal to the asymptotic variance of the maximum likelihood estimator  of \cite{Brockwell:Lindner:2019}.
	 However, it is not possible to make this conclusion for general CARMA processes. There exist CARMA processes
		for which the adjusted Whittle estimator  has a different asymptotic variance than the maximum likelihood estimator  of \cite{Brockwell:Lindner:2019}.
\end{remark}

\section{Simulation} \label{sec:simulation}

In this section, we show the practical applicability of the Whittle and the adjusted Whittle estimator.
We simulate continuous-time state space models with an Euler-Maruyama scheme for differential equations with initial value $X(0)=Y(0)=0$ and step size $0.01$. Using $\Delta=1$ and the interval $[0,500]$, we therefore get $n_1=500$ discrete observations. Furthermore, we investigate how the results change qualitatively when we consider the intervals $[0,2000]$ and $[0,5000]$, which imply $n_2=2000$ and $n_3=5000$ observations, respectively. In each sample, we use 500 replicates.
We investigate the estimation procedure based on two different driving L\'evy processes. Since the Brownian motion is the most common Lévy process, we examine Whittle`s estimation based on a Brownian motion. As a second case, we analyze the performance based on a bivariate normal-inverse Gaussian (NIG) L\'evy process, which is often used in modeling stochastic volatility or stock returns, see \cite{barndorff1997normal}.
The resulting increments of this process are characterized by the density $$f(x,\mu,\alpha,\beta,\delta_{NIG},\Delta_{NIG})=\frac{\delta_{NIG}}{2\pi}\frac{(1+\alpha g(x))}{g(x)^3}\exp({\delta_{NIG}\kappa}+ \beta^\top x-\alpha g(x)), \quad x \in \R^2,$$ with $$g(x)=\sqrt{\delta_{NIG}^2+\langle x-\mu, \Delta_{NIG}(x-\mu)\rangle},\ \kappa^2=\alpha^2-\langle \beta, \Delta_{NIG}\beta\rangle>0.$$ Thereby, $\beta \in \R^2$ is a symmetry parameter, $\delta_{NIG}\geq 0$ is a scale parameter and the positive definite matrix $\Delta_{NIG}$ models the dependency between the two components of the bivariate Lévy process $(L_t)_{t\in\R}$. We set $\mu=-({\delta_{NIG}\Delta_{NIG}\beta})/{\kappa}$ to guarantee that the resulting L\'evy process is centered, see, e.g., \cite{oigaard2005estimation} or \cite{barndorff1997normal} for more details.
For better comparability of the Brownian motion case and the NIG L\'evy process case, we choose the parameters
of the NIG Lévy process in a way that the resulting covariance matrices of the Lévy processes are the same.

The performances of the Whittle and the adjusted Whittle estimator are compared with the well known quasi maximum likelihood estimator (QMLE) presented in \cite{QMLE}.
The assumptions concerning the QMLE of \cite{QMLE} are the same as ours. Therefore, the  Echelon canonical form given in \cite{QMLE}, Section 4, is used as parametrization (cf. \cite{guidorzi1975canonical}) which is standard for state space and VARMA models (cf. \cite{HannanDeistler}).
In particular, Assumptions $(A1)$--$(A7)$ and $(B1)$--$(B3)$ are satisfied.

In the multivariate setting, we consider bivariate MCARMA(2,1) processes of the form
\begin{align*}
dX_t(\vartheta)= A(\vartheta)X_t(\vartheta)dt+ B(\vartheta)dL_t(\vartheta)\quad \text{ and }\quad Y_t(\vartheta)= C(\vartheta)X_t(\vartheta), \quad t\geq 0,
\end{align*}
with \begin{align*}A(\vartheta)&=\left( \begin{array}{c c c}\vartheta_1 & \vartheta_2 &0 \\ 0&0&1 \\ \vartheta_3 & \vartheta_4 &\vartheta_5\end{array}\right),\quad \quad B(\vartheta)=\left( \begin{array}{c c}\vartheta_1 & \vartheta_2  \\ \vartheta_6 & \vartheta_7\\ \vartheta_3+\vartheta_5 \vartheta_6 & \vartheta_6 +\vartheta_5\vartheta_7\end{array}\right),\\
C(\vartheta)&=\left( \begin{array}{c c c}1 & 0 &0 \\ 0 & 1 &0 \end{array}\right),\quad \quad  \quad \quad \Sigma_{L}(\vartheta)=\left( \begin{array}{ c c}\vartheta_8 & \vartheta_9 \\ \vartheta_9 & \vartheta_{10} \end{array}\right).\end{align*}
This parametrization is given in Table~1 of \cite{QMLE} and the representations of the corresponding AR polynomial $P$ and MA polynomial $Q$
are given in Table~2 of that paper. Furthermore,  we get the order $(2,1)$ of the MCARMA process from there as well.
In our example, the true parameter value  is
$$\vartheta^{(1)}_0=(-1,-2,1,-2,-3,1,2,0.4751,-0.1622,0.3708).$$
To generate a NIG L\'evy process with the same covariance matrix, we rely on the parameters
$$\delta_{NIG}^{(1)}=1, \quad \alpha^{(1)}=3,\quad \beta^{(1)}=(1,1)^T, \quad \Delta^{(1)}_{NIG}=\left( \begin{array}{ c c}5/4 & -1/2 \\ -1/2 & 1 \end{array}\right).$$
The estimation results  are summarized in ~\Cref{table_1} and \Cref{table_2} for the Brownian motion driven model and the NIG driven model, respectively. The consistency can be observed in all simulations, namely the bias and the standard deviations are decreasing for increasing sample size for both the Whittle estimator and the quasi maximum likelihood estimator.
The performance of the estimators is very similar.

\begin{table}[h]
	\begin{center}
		\begin{tabular}{|c||c|c|c||c|c|c|}\hline
			\multicolumn{7}{|c|}{$n_1=500$} \\\hline
			&  \multicolumn{3}{|c||}{Whittle} & \multicolumn{3}{|c|}{QMLE}\\ \hline
			\hspace*{0.1cm} $\vartheta_0$\hspace*{0.1cm} &mean & bias & std.  & mean  & bias &std.   \\ \hline
			-1 &-0.9969& 0.0031& 0.0325& -1.0012& 0.0012& 0.0572 \\
			-2 &-2.0218&0.0218&0.0582& -2.0128& 0.0128& 0.0689 \\
			1 &0.9980&0.0020&0.0520& 1.0075& 0.0075&0.0722 \\
			-2 &-2.0498& 0.0498&0.1060& -1.9797& 0.0203& 0.0758\\
			-3 &-2.9840&0.0160&0.0498&-2.9913& 0.0087& 0.0907  \\
			1 &1.0062&0.0062&0.1309&0.8034& 0.1966&0.3896  \\
			2 & 1.9983&0.0017&0.0532& 2.0036& 0.0036&0.0768 \\ \hline
			0.4751 &0.4746&0.0005&0.0407& 0.4693& 0.0048& 0.0691  \\
			-0.1622 &-0.1629&0.0007&0.0134&-0.1624&  0.0002&0.0405  \\
			0.3708 &0.3706&0.0002&0.0064& 0.3712 & 0.0004& 0.0328 \\ \hline \hline
			\multicolumn{7}{|c|}{$n_2=2000$} \\\hline
			&  \multicolumn{3}{|c||}{Whittle} & \multicolumn{3}{|c|}{QMLE}\\ \hline
			\hspace*{0.1cm} $\vartheta_0$\hspace*{0.1cm} &mean  & bias & std.  & mean  & bias &std.   \\ \hline
			-1 &-0.9970&0.0030& 0.0155			&-0.9957& 0.0043& 0.0260 \\
			-2 & -2.0062&0.0062&0.0252			& -2.0047& 0.0047& 0.0350 \\
			1 & 0.9909 &0.0091& 0.0266			& 1.0038& 0.0038& 0.0399 \\
			-2 & -2.0394 &0.0394&0.0501			& -2.0122&0.0122&0.0481\\
			-3 & -2.9857 & 0.0143&0.0371			& -3.0350& 0.0350& 0.0583  \\
			1 & 1.0775 & 0.0775 & 0.1030			& 0.9572& 0.0428& 0.2583  \\
			2 &2.0033 &0.0033 & 0.0205			& 2.0452&0.0452& 0.0463 \\ \hline
			0.4751 & 0.4731& 0.0020 & 0.0092			& 0.4719& 0.0032& 0.0321 \\
			-0.1622 & -0.1620 & 0.0002& 0.0059			& -0.1632& 0.0010& 0.0197  \\
			0.3708 & 0.3708 &0& 0.0037			&0.3731&  0.0023& 0.0167 \\ \hline \hline
			\multicolumn{7}{|c|}{$n_3=5000$} \\\hline
			&  \multicolumn{3}{|c||}{Whittle} & \multicolumn{3}{|c|}{QMLE}\\ \hline
			\hspace*{0.1cm} $\vartheta_0$\hspace*{0.1cm} &mean  & bias & std.  & mean  & bias &std.   \\ \hline	
			-1 &-1.0028& 0.0028& 0.0172& -0.9960& 0.0040&0.0174 \\
			-2 & -1.9954& 0.0146& 0.0041& -2.0059& 0.0059&0.0196 \\
			1 &0.9972& 0.0028& 0.0133& 1.0052& 0.0052& 0.0268 \\
			-2 &-2.0202& 0.0202& 0.0210&-2.0043&  0.0043& 0.0284\\
			-3 &-3.0091&0.0091& 0.0441& -3.0013& 0.0013&0.0261  \\
			1 &1.0585&0.0585&0.0409&1.0253&0.0253&0.1249  \\
			2 &2.0109&0.0109& 0.0318&2.0479 &0.0479&0.0346 \\ \hline
			0.4751 &0.4759&0.0008&0.0100&0.4735&0.0016&0.0200  \\
			-0.1622 &-0.1652&0.0030&0.0088&-0.1634&0.0012&0.0135  \\
			0.3708 &0.3904&0.0196&0.0079&0.3727 &0.0019&0.0109 \\ \hline
			
		\end{tabular}
	\end{center}
	\begin{center}
	\caption{\label{table_1}	Estimation results for a  Brownian motion driven bivariate MCARMA(2,1) process with parameter $\vartheta_0^{(1)}$.}
	\end{center}
	
\end{table}

\begin{table}[h]	
	\begin{center}
		\begin{tabular}{|c||c|c|c||c|c|c|}\hline
			\multicolumn{7}{|c|}{$n_1=500$} \\\hline\hline
			&  \multicolumn{3}{|c||}{Whittle} & \multicolumn{3}{|c|}{QMLE}\\ \hline
			\hspace*{0.1cm} $\vartheta_0$\hspace*{0.1cm} &mean & bias & std.  & mean  & bias &std.   \\ \hline
			-1 &-0.9555&0.0445&0.1559 &-0.9651& 0.0349& 0.1854 \\
			-2 &-1.8822&0.1178&0.2653 &-1.6978&0.3022&0.3452 \\
			1 & 0.8746&0.1254&0.1888& 1.1479& 0.1479& 0.2526 \\
			-2 &-2.0981&0.0981&0.2273 &-2.0066&0.0066&0.2962\\
			-3 &-3.1833&0.1833&0.2517 & -3.0578&0.0578&0.4076  \\
			1 &1.0533&0.0533&0.3614 &1.0272& 0.0272& 1.2301  \\
			2 &2.0461&0.0461&0.5710 &2.0490&0.0490&1.6673 \\ \hline
			0.4751 &0.4992&0.0241&0.1061 &0.4645& 0.0106&0.8220  \\
			-0.1622 & -0.1520&0.0102&0.1130 &-0.1669&0.0047&0.3317  \\
			0.3708&0.4100&0.0392&0.1081 &0.3748&0.0040&0.6100 \\ \hline\hline
			\multicolumn{7}{|c|}{$n_2=2000$} \\\hline
			&  \multicolumn{3}{|c||}{Whittle} & \multicolumn{3}{|c|}{QMLE}\\ \hline
			\hspace*{0.1cm} $\vartheta_0$\hspace*{0.1cm} &mean  & bias & std.  & mean  & bias &std.   \\ \hline
			-1 &-1.0351&0.0351&0.1224 & -0.9673&0.0327& 0.0243\\
			-2 &-1.8779&0.1221&0.1894 &-1.0564& 0.0426& 0.0713\\
			1 &0.9457&0.0543&0.2620 &1.1331& 0.1331& 0.1214\\
			-2 &-1.9586&0.0414&0.2573 &-1.9494& 0.0506& 0.0827\\
			-3 &-3.1682&0.1682&0.2238 & -3.1990& 0.1990& 0.4911\\
			1 &1.1234&0.1234&0.3120 & 1.1720& 0.1720& 0.5933\\
			2 &2.0842&0.0842&0.4842 & 2.0432& 0.0432& 0.1817\\ \hline
			0.4751 &0.5010&0.0259&0.1000 &  0.5237& 0.0486& 0.2726\\
			-0.1622 &-0.1740&0.0118&0.0992 & -0.0856& 0.0766& 0.1413\\
			0.3708 & 0.3908&0.0200&0.0758& 0.3220& 0.0488& 0.0049\\ \hline\hline
			
			\multicolumn{7}{|c|}{$n_3=5000$} \\\hline
			&  \multicolumn{3}{|c||}{Whittle} & \multicolumn{3}{|c|}{QMLE}\\ \hline
			\hspace*{0.1cm} $\vartheta_0$\hspace*{0.1cm} &mean & bias & std.  & mean  & bias &std.   \\ \hline
			-1 &-1.0238& 0.0238& 0.1182& -0.9844& 0.0156&0.0194 \\
			-2 & -1.9954& 0.0046& 0.2048& -2.0139& 0.0139&0.0246 \\
			1 &0.9942& 0.0058& 0.1517& 1.0102& 0.0102& 0.0299 \\
			-2 &-2.2202& 0.2202& 0.2210&-2.0043&  0.0043& 0.0284\\
			-3 &-3.0104&0.0104& 0.2463& -3.0015& 0.0015&0.2291  \\
			1 &1.0585&0.0585&0.2409&1.0655&0.0655&0.1347  \\
			2 &2.1169&0.1169& 0.0866&2.0400 &0.0400&0.0355 \\ \hline
			0.4751 &0.4855&0.0104&0.1180&0.4737&0.0018&0.0206  \\
			-0.1622 &-0.1682&0.0060&0.0408&-0.1634&0.0012&0.0145  \\
			0.3708 &0.3908&0.0200&0.0842&0.3730 &0.0022&0.0139 \\ \hline

		\end{tabular}
	\end{center}
	
	\begin{center}
	 \caption{	\label{table_2}	Estimation results for a  NIG driven bivariate MCARMA(2,1) process with parameter $\vartheta_0^{(1)}$.}
	\end{center}
	
\end{table}


Since we introduced an alternative estimator for the univariate setting, we perform an additional simulation study concerning one dimensional CARMA processes. In accordance to Assumption $\widetilde A$, the variance parameter $\sigma_L^2$ of the Lévy process is fixed in this study and has not to be estimated.
We consider a CARMA(2,1) model where
\begin{align*}
A(\vartheta)&=\left( \begin{array}{c c}0 & 1 \\  \vartheta_1 &\vartheta_2\end{array}\right),\quad \quad B(\vartheta)=\left( \begin{array}{c}\vartheta_3\\\vartheta_1+\vartheta_2\vartheta_3 \end{array}\right)\quad \text{ and } \quad C(\vartheta)=(1 \ 0 ).
\end{align*}
Since the output process $Y(\vartheta)$ of this minimal state space model is of dimension one, the order of the AR polynomial $p$ is equal to $N=2$
and the order of the MA polynomial is $q=p-1=1$. This means we have a CARMA$(2,1)$ process.
For more details on CARMA processes we refer to \cite{Brockwell:Lindner:2009,Brockwell:2014}.
In our simulation study
the true parameter is
$$\vartheta_0^{(2)}=(-2, -2 , -1 ).$$

The simulation results for the Brownian motion driven and the NIG driven CARMA(2,1) process are given in ~\Cref{table_3} and \Cref{table_4}, respectively.
For all  sample sizes, the Whittle estimator and the  QMLE behave very similar and give excellent estimation results. Whereas for small sample sizes the adjusted Whittle estimator is remarkably worse, for increasing sample sizes it performs much better and seems to converge.
Further simulations for a bivariate  MCAR(1) process and an univariate CAR(3) process showing a similar pattern as the simulations
of this section are presented in \Cref{se:further simulation} in the Supplementary Material.

\begin{table}[h]	
	\begin{center}
		\begin{tabular}{|c||c|c|c||c|c|c||c|c|c|}\hline
			\multicolumn{10}{|c|}{$n_1=500$} \\\hline
			&  \multicolumn{3}{|c||}{Whittle} &\multicolumn{3}{|c||}{adjusted Whittle}& \multicolumn{3}{|c|}{QMLE}\\ \hline
			\  $\vartheta_0$\hspace*{0.05cm} &mean & bias& std. & mean   & bias  &std.  & mean  & bias &std.  \\ \hline
			-2& -2.0951& 0.0951& 0.7766& 3.1063& 1.1063& 3.4195& -2.0880& 0.0880& 0.7628\\
			-2& -2.0482& 0.0482& 0.6500 &-2.9233& 0.9233& 2.9957& -2.0449& 0.0449& 0.5889\\
			-1& -0.9731& 0.0269& 0.1186& -0.9028& 0.0972& 0.3710& -0.9729& 0.0271& 0.1779\\
			\hline \hline
			\multicolumn{10}{|c|}{$n_2=2000$} \\\hline
			&  \multicolumn{3}{|c||}{Whittle} &\multicolumn{3}{|c||}{adjusted Whittle}& \multicolumn{3}{|c|}{QMLE}\\ \hline
			\ $\vartheta_0$\hspace*{0.05cm} &mean  & bias & std.& mean   & bias  &std.   & mean & bias&std.\\ \hline
			-2& -2.0204& 0.0204& 0.0755&-2.0816& 0.0816& 1.0399 & -2.0015& 0.0015& 0.1926\\
			-2& -1.9975& 0.0025& 0.0637& -2.0732& 0.0732& 0.9199 & -1.9948& 0.0052& 0.1466\\
			-1& -0.9933& 0.0067& 0.0547& -0.9965& 0.0035& 0.1267& -0.9993& 0.0007& 0.0674 \\
			\hline \hline
			\multicolumn{10}{|c|}{$n_3=5000$} \\\hline \hline
			&  \multicolumn{3}{|c||}{Whittle} &\multicolumn{3}{|c||}{adjusted Whittle}& \multicolumn{3}{|c|}{QMLE}\\ \hline
			\ $\vartheta_0$\hspace*{0.05cm} &mean & bias & std. & mean   & bias  &std.   & mean  & bias &std. \\ \hline	
			-2& -2.0046& 0.0046& 0.0117 &-1.9854& 0.0146& 0.0860& -2.0068& 0.0068&0.0997\\
			-2& -1.9914& 0.0086& 0.0149& -1.9840& 0.0160& 0..0821& -1.9942& 0.0058& 0.0772\\
			-1& -1.0004& 0.0004& 0.0153& -1.0070& 0.0070& 0.0488& -1.0009& 0.0009& 0.0408\\
			\hline
			
		\end{tabular}
	\end{center}
	\caption{\label{table_3} 	Estimation results for a Brownian motion driven CARMA(2,1) process with parameter $\vartheta_0^{(2)}$.}
\end{table}

\begin{table}[h]	
	\begin{center}
		\begin{tabular}{|c||c|c|c||c|c|c||c|c|c|}\hline
			\multicolumn{10}{|c|}{$n_1=500$} \\\hline
			&  \multicolumn{3}{|c||}{Whittle} &\multicolumn{3}{|c||}{adjusted Whittle}& \multicolumn{3}{|c|}{QMLE}\\ \hline
			\ $\vartheta_0$\hspace*{0.1cm} &mean  & bias & std. & mean   & bias  &std.  & mean & bias &std.  \\ \hline
			-2& -2.3278& 0.3278& 1.7598& -3.0174 & 1.0174& 3.2090&-2.3175& 0.3175& 1.0862\\
			-2& -2.2612& 0.2612& 1.4892& -2.8550& 0.8550& 2.8684& -2.2047& 0.2047& 0.8023 \\
			-1& -0.9855& 0.0145& 0.1652& -0.9445& 0.0555& 0.3376& -0.9243& 0.0757& 0.2938 \\
			\hline \hline
			\multicolumn{10}{|c|}{$n_2=2000$} \\\hline
			&  \multicolumn{3}{|c||}{Whittle} &\multicolumn{3}{|c||}{adjusted Whittle}& \multicolumn{3}{|c|}{QMLE}\\ \hline
			\ $\vartheta_0$\hspace*{0.1cm} &mean  & bias & std. & mean   & bias  &std.   & mean  & bias &std. \\ \hline
			-2& -2.0261& 0.0261& 0.1038& -1.9996& 0.0004& 0.5351& -2.0122& 0.0122& 0.2526 \\
			-2& -1.9977& 0.0023& 0.0784& -1.9988& 0.0012& 0.4552 & -2.0034& 0.0034& 0.1845\\
			-1& -0.9968& 0.0032& 0.0607& -1.0153& 0.0153& 0.0961& -1.0037& 0.0037& 0.0848
			\\ \hline \hline
			\multicolumn{10}{|c|}{$n_3=5000$} \\\hline \hline			&  \multicolumn{3}{|c||}{Whittle} &\multicolumn{3}{|c||}{adjusted Whittle}& \multicolumn{3}{|c|}{QMLE}\\ \hline
			\  $\vartheta_0$\hspace*{0.1cm} &mean  & bias & std. & mean   & bias  &std.   & mean  & bias &std.  \\ \hline	
			-2& -2.0138& 0.0138 & 0.0575& -1.9842& 0.0158& 0.0902 & -1.9938& 0.0062& 0.1093\\
			-2& -1.9948& 0.0052& 0.0466 & -1.9866& 0.0134& 0.0825& -1.9917& 0.0083& 0.0906\\
			-1& -0.9991& 0.0009& 0.0339& -1.0097& 0.0097& 0.0508& -1.0059& 0.0059& 0.0415 \\
			\hline
			
		\end{tabular}
	\end{center}
	\caption{	\label{table_4} 	Estimation results for a NIG driven CARMA(2,1) process with parameter $\vartheta_0^{(2)}$.}
\end{table}
\FloatBarrier

\section{Proofs  for the Whittle estimator in ~\Cref{sec:WhittleEstimator}} \label{sec:Proofs:WhittleEstimator}

\subsection{Proofs of \Cref{sec:Cons:Whittle}} \label{sec:Proof:Theorem3.2}

\noindent\textit{Proof of \Cref{Hilfssatz1}.}
	We divide $W_n$ in two parts and investigate them separately.
	Therefore, define  $$W_n^{(1)}(\vartheta):=\frac{1}{2n}\sum_{j=-n+1}^{n}\text{tr}\left(\f(\omega_j, \vartheta)^{-1}I_n(\omega_j)\right)$$
	and  $$W_n^{(2)}(\vartheta)=\frac{1}{2n}\sum_{j=-n+1}^{n}\log\left(\det\left(\f(\omega_j,\vartheta)\right)\right),$$ such that $W_n(\vartheta)=W_n^{(1)}(\vartheta)+W_n^{(2)}(\vartheta).$ Since $(A1)$ and $(A4)$ are satisfied, we can apply \Cref{unifdet} of the Supplementary Material, which gives the uniform convergence \begin{align}
	\sup_{\vartheta \in \Theta}\left|W_n^{(2)}(\vartheta)-\frac 1 {2\pi}\int_{-\pi}^{\pi}\log\left(\det\left(\f(\omega,\vartheta)\right)\right)d\omega\right|\overset{n\to \infty}{\longrightarrow}0.
	\end{align}
	It remains to prove the appropriate convergence of $W_n^{(1)}$. Therefore, it is sufficient to show that
	\begin{align}\label{glmKW1}\sup_{\vartheta \in \Theta}\left\|\frac{1}{2n}\sum_{j=-n+1}^{n}\f(\omega_j, \vartheta)^{-1}I_n(\omega_j)-\frac{1}{2\pi}\int_{-\pi}^{\pi}\f(\omega,\vartheta)^{-1}{\f(\omega)}d\omega\right\|\overset{a.s.}{\longrightarrow}0\end{align} holds.
	We approximate $\f(\omega_j, \vartheta)^{-1}$ by the Cesàro sum of its Fourier series of size $M$  for $M$ sufficiently large. Define
	\begin{align*}
	q_M(\omega,\vartheta)&:=\frac{1}{M}\sum_{j=0}^{M-1}\left(\sum_{|k|\leq j}{b}_k(\vartheta) e^{-ik\omega} \right)=\sum_{|k|<M}\left(1-\frac{|k|}{M}\right)b_k(\vartheta) e^{-ik\omega} \quad \text{ with }\\b_k(\vartheta)&:= \frac 1 {2\pi}\int_{-\pi}^{\pi} f_Y^{(\Delta)}(\omega,\vartheta)^{-1}e^{ik\omega}d\omega .
	\end{align*}
	The inverse $f_Y^{(\Delta)}(\omega,\vartheta)^{-1}$ exists, is continuous and
	$2\pi$-periodic in the first component. Thus, an application of  \Cref{Fejer} of the Supplementary Material gives that for any $\epsilon>0$ there exists an $M_0(\epsilon)\in \N$ such that for $M\geq M_0(\epsilon)$  \begin{align}\label{H1}
	\sup_{ \omega \in[-\pi,\pi]}\sup_{\vartheta \in \Theta}\left\|\f(\omega,\vartheta)^{-1}-q_M(\omega,\vartheta)\right\|<\epsilon.
	\end{align} Let $\epsilon>0.$
	In view of \eqref{H1}, we get \begin{eqnarray}\label{hh3}\left\|\frac{1}{2n}\sum_{j=-n+1}^{n}\f(\omega_j, \vartheta)^{-1}I_n(\omega_j)-\frac{1}{2n}\sum_{j=-n+1}^{n}q_M(\omega_j,\vartheta)I_n(\omega_j) \right\|\leq\frac{\epsilon}{2n}\sum_{j=-n+1}^{n}\left\|I_n(\omega_j)\right\|.
	\end{eqnarray}
	Since all matrix norms are equivalent, using the 1-norm yields
	\begin{align}\label{hh1}
	\frac{\epsilon}{2n}\sum_{j=-n+1}^{n}\left\|I_n(\omega_j)\right\|\leq \frac{\epsilon \mathfrak C}{2n}\sum_{j=-n+1}^{n}\sum_{k=1}^m \sum_{\ell=1}^{m}|I_n(\omega_j)[{k,\ell}]|.
	\end{align}
	The representation \eqref{Per_ACVF} of the periodogram and the non-negativeness of any one dimensional periodogram imply that ${a^\top I_n(\omega_j)a=I_{n,a^\top Y}(\omega_j)\geq 0}$ so that $I_n(\omega_j)$ is a positive semi-definite and Hermitian matrix. Therefore,  for $k,\ell \in \{1,\ldots,m\},\ j \in \{-n+1,\ldots,n\},$  \begin{align*}
	\det\begin{pmatrix}
	I_n(\omega_j)[{k,k}] & I_n(\omega_j)[{k,\ell}] \\
	I_n(\omega_j)[{\ell,k}] &  I_n(\omega_j)[{\ell,\ell}]
	\end{pmatrix}\geq 0,\end{align*} which implies \begin{align}\label{hh2}\left|I_n(\omega_j)[{k,\ell}]\right|\leq \sqrt{I_n(\omega_j)[{k,k}]I_n(\omega_j)[{\ell,\ell}]}\leq I_n(\omega_j)[{k,k}]+I_n(\omega_j)[{\ell,\ell}].\end{align}
	Combining \eqref{hh3}, \eqref{hh1}, \eqref{hh2} and \Cref{summee} of the Supplementary Material gives for $M\geq M_0(\epsilon)$
	\begin{eqnarray*}
		\lefteqn{\left\|\frac{1}{2n}\sum_{j=-n+1}^{n}\f(\omega_j, \vartheta)^{-1}I_n(\omega_j)-\frac{1}{2n}\sum_{j=-n+1}^{n}q_M(\omega_j,\vartheta)I_n(\omega_j) \right\|}\\
		&&\leq \frac{\epsilon \mathfrak C}{2n}\sum_{j=-n+1}^{n}\sum_{k=1}^m \sum_{\ell=1}^{m}\left[I_n(\omega_j)[{k,k}]+I_n(\omega_j)[\ell,\ell]\right]\\
		&&\leq\frac{\epsilon \mathfrak Cm}{n}\sum_{j=-n+1}^{n}\sum_{k=1}^m I_n(\omega_j)[{k,k}]\\
		&&\leq {2\epsilon\mathfrak Cm} \sum_{k=1}^{m}\overline \Gamma^{(\Delta)}_n(0)[{k,k}].
	\end{eqnarray*}
	Since
	$ \sum_{k=1}^{m}\overline \Gamma_n^{(\Delta)}(0)[{k,k}]\overset{a.s.}{\longrightarrow}\sum_{k=1}^{m} \Gamma^{(\Delta)}(0)[{k,k}]<\infty$ due to \Cref{ewertmpAC} in the Supplementary Material,
	we obtain for $M\geq M_0(\epsilon)$ and $n$ large
    $$\sup_{\vartheta \in \Theta}\left\|\frac{1}{2n}\sum_{j=-n+1}^{n}\left(\f(\omega_j, \vartheta)^{-1}I_n(\omega_j)\right)-\frac{1}{2n}\sum_{j=-n+1}^{n}q_M(\omega_j,\vartheta)I_n(\omega_j) \right\|\leq\varepsilon\mathfrak C$$ almost surely.  Consequently, for the proof of \eqref{glmKW1} it is sufficient to show that \begin{align}\label{glmKW2}
	\sup_{\vartheta\in \Theta}\left\|\frac 1 {2n}\sum_{j=-n+1}^{n}q_M(\omega_j,\vartheta)I_n(\omega_j)-\frac{1}{2\pi}\int_{-\pi}^{\pi}\f(\omega,\vartheta)^{-1}\f(\omega)d\omega \right\| \overset{a.s.}\longrightarrow 0.
	\end{align}
	On the one hand, \Cref{summee} of the Supplementary Material yields
	\begin{eqnarray} \nonumber\frac{1}{2n}\sum_{j=-n+1}^{n}q_M(\omega_j,\vartheta)I_n(\omega_j)&=&\frac{1}{2\pi}\sum_{|k|<M}\sum_{|h|<n}\left(\left(1-\frac{|k|}{M}\right)b_k(\vartheta)\overline\Gamma_n^{(\Delta)}(h){\left(\frac{1}{2n}\sum_{j=-n+1}^{n}e^{-i(k+h)\omega_j}\right)}\right)\\
	\nonumber&=&\frac{1}{2\pi}\sum_{|k|<M}\left(1-\frac{|k|}{M}\right)b_k(\vartheta)\overline\Gamma_n^{(\Delta)}(-k)\\
	\label{2.1.a}&\overset{a.s.}{\longrightarrow}&\frac{1}{2\pi}\sum_{|k|<M}\left(1-\frac{|k|}{M}\right)b_{k}(\vartheta)\Gamma^{(\Delta)}(-k)
	\end{eqnarray}
	uniformly in $\vartheta$, since $b_{k}(\vartheta)$ is uniformly bounded in $\vartheta$ for all $k$. The reason is that $f_Y^{(\Delta)}(\omega,\vartheta)^{-1}$ is continuous on the compact set $[-\pi,\pi]\times \Theta$ and  $$\sup_{\substack{\vartheta\in \Theta\\ k \in \Z}} \|b_k(\vartheta)\|=\sup_{\substack{\vartheta\in \Theta\\ k \in \Z}} \left\| \frac 1 {2\pi}\int_{-\pi}^{\pi} f_Y^{(\Delta)}(\omega,\vartheta)^{-1}e^{ik\omega}d\omega \right\|\leq \max_{\vartheta\in \Theta} \max_{\omega\in [-\pi,\pi]}\|f_Y^{(\Delta)}(\omega,\vartheta)^{-1} \|.$$
	On the other hand, due to \eqref{kovarianz}, we get
	\begin{eqnarray} \nonumber\lefteqn{\left\| \frac{1}{2\pi}\sum_{|h|<M}\left(1-\frac{|h|}{M}\right)b_{-h}(\vartheta)\Gamma^{(\Delta)}(h) -\frac{1}{2\pi}\int_{-\pi}^{\pi}\f(\omega,\vartheta)^{-1}\f(\omega)d\omega\right\|}\\ \nonumber&\overset{}{=}&\left\| \frac{1}{2\pi}\sum_{|h|<M}\left(1-\frac{|h|}{M}\right)b_{-h}(\vartheta)\int_{-\pi}^{\pi}\f(\omega)e^{ih\omega}d\omega -\frac{1}{2\pi}\int_{-\pi}^{\pi}\f(\omega,\vartheta)^{-1}\f(\omega)d\omega\right\|\\
	\nonumber&=&\left\|\frac{1}{2\pi}\int_{-\pi}^{\pi}\left(q_M(\omega,\vartheta)-\f(\omega,\vartheta)^{-1}\right)\f(\omega)d\omega \right\| \\
	\label{2.1b}	&\ \leq&\frac{1}{2\pi}\int_{-\pi}^{\pi}\left\|q_M(\omega,\vartheta)-\f(\omega,\vartheta)^{-1}\right\|\left\|\f(\omega)\right\|d\omega \leq \epsilon \mathfrak{C},
	\end{eqnarray}
	where we used \eqref{H1} and the continuity of $\f(\omega)$ for the last inequality. Combining \eqref{2.1.a} and \eqref{2.1b} gives \eqref{glmKW2}.
\hfill$\Box$\\[1mm]

\noindent\textit{Proof of \Cref{schritt2}.}
	In view of \Cref{invertible}, we express the linear innovations as
	\begin{align*}
	\varepsilon_{k}^{(\Delta)}(\vartheta)=\Pi(\mathsf{B},\vartheta)Y_k^{(\Delta)}(\vartheta), \quad k\in\N,
	\end{align*}
	and define the pseudo innovations as
	\begin{align*}
	\xi_{k}^{(\Delta)}(\vartheta):=\Pi(\mathsf{B},\vartheta)Y_k^{(\Delta)}(\vartheta_0), \quad k\in\N.
	\end{align*}
	An application of Theorem 11.8.3 of \cite{BrockwellDavis}  leads to the spectral densities of $(\varepsilon_k^{(\Delta)}(\vartheta))_{k\in\N}$ and $(\xi^{(\Delta)}_k(\vartheta))_{k\in\N}$ as
	\begin{align*}
	f^{(\Delta)}_\varepsilon(\omega,\vartheta)&=\Pi(e^{-i\omega},\vartheta)\f(\omega,\vartheta)\Pi(e^{i\omega},\vartheta)^\top,\quad \omega\in[-\pi,\pi],\\
	f^{(\Delta)}_\xi(\omega,\vartheta)&=\Pi(e^{-i\omega},\vartheta)\f(\omega)\Pi(e^{i\omega},\vartheta)^\top,\quad \omega\in[-\pi,\pi],
	\end{align*}
	respectively.
	Consequently,
	\begin{eqnarray*}
		\lefteqn{\frac{1}{2\pi}\int_{-\pi}^{\pi}\text{tr}\left(\f(\omega,\vartheta)^{-1}{\f(\omega)}\right)d\omega}\\
		&\quad\quad\overset{}{=}&\frac{1}{2\pi}\text{tr}\left(\int_{-\pi}^{\pi}2\pi\Pi(e^{i\omega},\vartheta)^\top{V^{(\Delta)}(\vartheta)^{-1}}\Pi(e^{-i\omega},\vartheta)\f(\omega)d\omega\right)\\
		&\quad\quad{=}&\text{tr}\left(V^{(\Delta)}(\vartheta)^{-1}\int_{-\pi}^{\pi}f^{(\Delta)}_\xi(\omega,\vartheta)d\omega\right)\\
		&\quad\quad\overset{}{=}&\E\left[\text{tr}\left(\xi^{(\Delta)}_1(\vartheta)^\top V^{(\Delta)}(\vartheta)^{-1}\xi^{(\Delta)}_{1}(\vartheta)\right)\right]
	\end{eqnarray*}
	holds.
	Finally,
	$$\frac{1}{2\pi}\int_{-\pi}^{\pi}\log\left(\det\left(\f(\omega,\vartheta)\right)\right)d\omega=\frac{1}{2\pi}\int_{-\pi}^{\pi}\log\left(\det\left(2\pi \f(\omega,\vartheta)\right)\right)d\omega- m\log(2\pi),$$ and an application of Theorem 3'{}'{}' of Chapter 3 of \cite{Hannanmult} results in \begin{align}\label{V3}\frac{1}{2\pi}\int_{-\pi}^{\pi}\log\left(\det\left(2\pi \f(\omega,\vartheta)\right)\right)d\omega- m\log(2\pi)=\log (\det V^{(\Delta)}(\vartheta))-m\log(2\pi),\end{align}	which completes the proof.
\hfill$\Box$\\[1mm]

\noindent\textit{Proof of \Cref{schritt2.2}.}
	Considering \Cref{schritt2} we get  $W(\vartheta)= \mathcal{L}(\vartheta)$. \cite{QMLE}, Lemma 2.10, proved that $\mathcal{L}$ has a unique global minimum in $\vartheta_0$ under conditions which are fulfilled in our setting (see Lemma 2.3 and Lemma 3.14 of \cite{QMLE}).
\hfill$\Box$\\[1mm]

\noindent \textit{Proof of \Cref{satz1}.}
Due to \Cref{Hilfssatz1} and \Cref{schritt2.2}, we know that the Whittle function $W_n$ converges almost surely uniformly to $W$ and that $W$ has a unique global minimum in $\vartheta_0$. It remains to show that the minimizing arguments of $W_n$ converge almost surely to the minimizer of $W$.
To that effect, we first prove \begin{align}\label{schritt3zz}W_n(\widehat\vartheta_n^{(\Delta)})\overset{a.s.}{\longrightarrow}W(\vartheta_0)\end{align} and deduce that for every neighborhood $U$ of $\vartheta_0$ Whittle`s estimate $\widehat\vartheta_n^{(\Delta)}$ lies in $U$ almost surely for $n$ large enough.

In view of \Cref{Hilfssatz1}, for all $\epsilon>0$ there exists some $n_0 \in \N$ with \begin{align}\label{con1} \sup_{\vartheta \in \Theta} |W_n(\vartheta)-W(\vartheta)|\leq \epsilon \quad \forall\ n\geq n_0\quad \P\text{-a.s.}\end{align}
Therefore, using the definition of $\widehat\vartheta_n^{(\Delta)}$ and \Cref{schritt2.2}, we get for $n\geq n_0$
\begin{align*}
W_n(\widehat\vartheta_n^{(\Delta)})&\leq W_n(\vartheta_0)\leq W(\vartheta_0)+\epsilon \quad \P\text{-a.s.} \quad \text{ and }\\
W_n(\widehat\vartheta_n^{(\Delta)})&\geq W(\widehat\vartheta_n^{(\Delta)})-\epsilon\geq W(\vartheta_0)-\epsilon \quad \P\text{-a.s.}
\end{align*}
and hence,$$\sup_{n\geq n_0}|W_n(\widehat\vartheta_n^{(\Delta)})-W(\vartheta_0)|\leq \epsilon \quad \P\text{-a.s.}$$
follows. This gives the desired convergence \eqref{schritt3zz}.
Now, define $\delta(U):=\inf_{\vartheta\in \Theta\setminus U}W(\vartheta)-W(\vartheta_0)>0$ for any neighborhood of $U$ of $\vartheta_0$. The inequalities \begin{eqnarray*}
	\lefteqn{\P\left(\lim_{n\to \infty} \widehat\vartheta_n^{(\Delta)}=\vartheta_0\right)=\P\left(\forall\ U\ \exists\  n_0(U) \in \N:\ \widehat\vartheta_n^{(\Delta)}\in U\  \forall\ n\geq n_0(U)\right)}\\
	&\geq& \P\left(\forall\ U \ \exists\ n_0(U)\in \N:\ |W_n(\widehat\vartheta_n^{(\Delta)})-W(\vartheta_0)|<\frac{\delta(U)}{2}\right.\\
	&&\left. \quad \quad \text{ and } |W_n(\widehat\vartheta_n^{(\Delta)})-W(\widehat\vartheta_n^{(\Delta)})|<\frac{\delta(U)}{2}\  \forall\ n\geq n_0(U)\right)=1,
\end{eqnarray*}
where the last equality follows from \eqref{schritt3zz} and \Cref{Hilfssatz1}, complete the proof.
\hfill$\Box$

\subsection{Proofs of \Cref{sec:Whittle:asymptoticNormal}} 


\noindent\textit{Proof of \Cref{remarkunif}.}
	Under the Assumptions $(A1)$--$(A4)$ and $(B3)$ the spectral density $\f(\omega,\vartheta)$ and its inverse $\f(\omega,\vartheta)^{-1}$ are three times continuously differentiable in $\vartheta$ (see \Cref{identifizierbar} and \Cref{BemAsyNorm}). Furthermore,  $$\frac{\partial^2}{\partial \vartheta_k \partial \vartheta_\ell} \tr \left(\f(\omega,\vartheta)^{-1}I_n(\omega)\right)= \tr\left(\frac{\partial^2}{\partial \vartheta_k \partial \vartheta_\ell}\left(\f(\omega,\vartheta)^{-1}\right)I_n(\omega)\right),\quad  k,\ell \in \{1,\ldots, r\}.$$ Therefore, the proof of $$\sup_{\vartheta \in \Theta}\left\|\nabla^2_\vartheta W_n(\vartheta)- \nabla^2_\vartheta W(\vartheta)\right\|\overset{a.s.}{\longrightarrow}0$$  goes in the same way as the proof of \Cref{Hilfssatz1}.
	It remains to show that $\nabla^2_\vartheta W(\vartheta_0)=\Sigma_{\nabla^2 W}$. 
	
	First, note that \begin{align}\label{zweiteAblDarst1} \nabla^2_\vartheta W(\vartheta_0)=\frac{1}{2\pi}\int_{-\pi}^{\pi} \nabla_\vartheta^2 \tr(\f(\omega,\vartheta_0)^{-1}\f(\omega))+\nabla_\vartheta^2 \log\left(\det \left(\f(\omega,\vartheta_0)\right)\right)d\omega.\end{align}
	On the one hand, \begin{eqnarray}
	\nonumber\lefteqn{\frac{1}{2\pi}\int_{-\pi}^{\pi}\tr\left(\frac {\partial^2}{\partial \vartheta_k \partial \vartheta_l}\left(\f(\omega,\vartheta_0)^{-1}\right)\f(\omega)\right)d\omega}\\
	\nonumber&&=\frac{1}{2\pi}\int_{-\pi}^{\pi}\tr\left(2\f(\omega)^{-1}\left(\frac {\partial}{\partial \vartheta_k}\f(\omega,\vartheta_0)\right)\f(\omega)^{-1}\left(\frac{\partial}{\partial \vartheta_\ell}\f(\omega,\vartheta_0)\right)\right.\\&&\label{Vickys2}\quad\quad\quad\quad\quad\left.-\f(\omega)^{-1}\left(\frac{\partial^2}{\partial \vartheta_k\partial \vartheta_\ell}\f(\omega,\vartheta_0)\right)\right)d\omega
	\end{eqnarray}
	holds.
	On the other hand, Jacobi's formula leads to
	\begin{eqnarray}\nonumber
	\lefteqn{\frac{\partial^2}{\partial \vartheta_k\partial \vartheta_\ell}\log(\det(\f(\omega,\vartheta_0)))}\\
	&&=\tr\left(-\f(\omega)^{-1}\left(\frac {\partial}{\partial \vartheta_k}\f(\omega,\vartheta_0)\right)\f(\omega)^{-1}\left(\frac{\partial}{\partial \vartheta_\ell}\f(\omega,\vartheta_0)\right)\right)\nonumber\\
	\label{Vickys3}&&\quad+\tr\left(\f(\omega)^{-1}\left(\frac{\partial^2}{\partial \vartheta_k\partial \vartheta_\ell}\f(\omega,\vartheta_0)\right)\right).\end{eqnarray}
	Combining \eqref{zweiteAblDarst1}, \eqref{Vickys2}, \eqref{Vickys3} and the property
    \begin{align}\label{vectrace} \vecc\left(A^\top\right)^\top\left(B^\top\otimes C\right) \vecc(D)=\tr\left(BACD\right)
	\end{align}
	for appropriate matrices $A,B,C,D$ (see \cite{Brewer}, properties T2.4, T3.4 and T3.8) gives
	\begin{align*}
	{\nabla^2_\vartheta W(\vartheta_0)}&=\frac{1}{2\pi}\int_{-\pi}^{\pi}\nabla_\vartheta \f(-\omega,\vartheta_0)^{\top}\left[\f(-\omega)^{-1}\otimes \f(\omega)^{-1}\right]\nabla_\vartheta\f(\omega,\vartheta_0)d\omega=\Sigma_{\nabla^2W}.
	\end{align*}
\hfill$\Box$\\[1mm]

\noindent\textit{Proof of \Cref{positiveSigmanabla2}.}
	Let $c \in \C^r$ be fixed and $\omega^*$ as in $(B4)$. The continuity of $\f(\omega)$ and its regularity imply for any $\omega$ in a neighborhood of
	$\omega^*$ that
	$$\left\|\left(\f(-\omega)^{-1/2}\otimes\f(\omega)^{-1/2}\right)\nabla_\vartheta\f(\omega,\vartheta_0)c\right\|_2 >0$$
	where $\|\cdot\|_2$ is the Euclidean norm.
	Consequently,   \begin{eqnarray*}c^\top\Sigma_{\nabla^2 W}c&=&\frac{1}{2\pi}\int_{-\pi}^{\pi}c^\top \nabla_\vartheta \f(\omega,\vartheta_0)^{H}\left[\f(-\omega)^{-1}\otimes \f(\omega)^{-1}\right]\nabla_\vartheta\f(\omega,\vartheta_0) c d\omega\\
		&=&\frac{1}{2\pi}\int_{-\pi}^{\pi}\left\|\left(\f(-\omega)^{-1/2}\otimes\f(\omega)^{-1/2}\right)\nabla_\vartheta\f(\omega,\vartheta_0)c\right\|_2^2d\omega
		>0.
	\end{eqnarray*}
	Therefore, $\Sigma_{\nabla^2W}$ is positive definite.
\hfill$\Box$\\[1mm]

For the proof of \Cref{Prop2} we require some auxiliary result.
Therefore, we denote the periodogram  and the sample covariance corresponding to $N_1^{(\Delta)},\ldots,N_n^{(\Delta)}$ as $I_{n,N}$ and $\overline\Gamma_{n,N}$, respectively.

\begin{lemma}\label{Prop1}
	Let Assumptions $(A2)$--$(A4)$ hold and
	$\eta:[-\pi,\pi]\to \C^{m\times m}$ be a symmetric matrix-valued continuous function with Fourier coefficients $(\mathfrak{f}_u)_{u\in \Z}$ satisfying $\sum_{u=-\infty}^{\infty} \|\mathfrak{f}_u\|<\infty.$
	Then,
	$$ \lim_{n\to \infty}\E\left|\frac 1{2\sqrt n} \sum_{j=-n+1}^{n}\tr\left(\eta(\omega_j)I_n(\omega_j)-  \eta(\omega_j)\Phi(e^{-i\omega_j})I_{n,N}(\omega_j)\Phi(e^{i\omega_j})^\top\right) \right|=0.$$
\end{lemma}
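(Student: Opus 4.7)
The plan is to exploit the moving-average representation $Y_k^{(\Delta)} = \sum_{j=0}^\infty \Phi_j N_{k-j}^{(\Delta)}$, valid because eigenvalues of $A$ have strictly negative real parts by Assumption (A3), which gives $\|\Phi_j\| = \|Ce^{A\Delta j}\| \leq \mathfrak{C}\rho^j$ for some $\rho \in (0,1)$. Defining the un-normalized DFTs $D_Y(\omega) := \sum_{k=1}^n Y_k^{(\Delta)} e^{-ik\omega}$ and $D_N(\omega) := \sum_{k=1}^n N_k^{(\Delta)} e^{-ik\omega}$, substitution and reindexing yield
\[D_Y(\omega) = \Phi(e^{-i\omega})\,D_N(\omega) + R_n(\omega),\]
with boundary remainder
\[R_n(\omega) = \sum_{j=1}^\infty \Phi_j\, e^{-ij\omega}\Biggl[\sum_{m=1-j}^{0} N_m^{(\Delta)} e^{-im\omega}\;-\;\sum_{m=n-j+1}^{n} N_m^{(\Delta)} e^{-im\omega}\Biggr].\]
A direct expansion of the periodogram products then gives
\[I_n(\omega) - \Phi(e^{-i\omega})I_{n,N}(\omega)\Phi(e^{i\omega})^\top = \tfrac{1}{2\pi n}\Bigl[\Phi(e^{-i\omega})D_N(\omega)R_n(\omega)^H + R_n(\omega)D_N(\omega)^H\Phi(e^{i\omega})^\top + R_n(\omega)R_n(\omega)^H\Bigr].\]

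Write $S_n$ for the random variable inside the expectation in the statement. By the Cauchy--Schwarz inequality, $\E|S_n| \leq \bigl(|\E[S_n]|^2 + \Var(S_n)\bigr)^{1/2}$, so it suffices to prove $\E[S_n] \to 0$ and $\Var(S_n) \to 0$. The central tool for both is to expand $\eta$ in its Fourier series $\eta(\omega) = \sum_u \mathfrak{f}_u e^{-iu\omega}$ and invoke the orthogonality $\sum_{j=-n+1}^n e^{iv\omega_j} = 2n\,\mathds{1}\{v \equiv 0 \pmod{2n}\}$. This collapses the outer sum over Fourier frequencies into a sum over Fourier modes $u$ of $\eta$ paired against the MA indices of $\Phi$ and the boundary indices of $R_n$, and is the mechanism that produces the decisive $1/\sqrt n$-scale cancellation.

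Bounding the mean is then routine: using the i.i.d.\ structure of $(N_k^{(\Delta)})$ the expectations $\E[D_N(\omega)R_n(\omega)^H]$ and $\E[R_n(\omega)R_n(\omega)^H]$ reduce to sums over the $\ell\leq n$ boundary indices weighted by $\Phi_\ell$, and the exponential decay of $\|\Phi_j\|$ together with $\sum_u\|\mathfrak{f}_u\|<\infty$ yield $|\E[S_n]| = O(n^{-1/2})$. The main obstacle is the variance: $\Var(S_n)$ is a quartic functional of $(N_k^{(\Delta)})$, well-defined because $\E\|L_1\|^4<\infty$ (Assumption (B2)) propagates to $\E\|N_1^{(\Delta)}\|^4<\infty$ via boundedness of the kernel $e^{Au}B$ on $[0,\Delta]$. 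After a Wick-type expansion into covariance pairings plus a fourth-cumulant term, each pairing must be tracked through two copies of the boundary decomposition of $R_n$: pairings containing at least one boundary index acquire a factor that is exponentially suppressed by the decay of $\Phi_j$, while the fourth-cumulant piece must be shown not to spoil the rate. Combined with the $1/(n^{3/2})$ prefactor (from $1/\sqrt n$ times $1/n$) this should deliver $\Var(S_n) = O(1/n)$; the accounting of these pairings is the technically delicate step of the proof.
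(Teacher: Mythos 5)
Your setup is the right one and matches the paper's: the MA$(\infty)$ representation $Y_k^{(\Delta)}=\sum_{j\ge 0}\Phi_j N_{k-j}^{(\Delta)}$, the identification of the difference $I_n-\Phi I_{n,N}\Phi^\top$ with a finite collection of boundary terms, and the orthogonality relation $\sum_{j=-n+1}^{n}e^{-iv\omega_j}=2n\,\mathds{1}\{v\equiv 0\ (\mathrm{mod}\ 2n)\}$ as the mechanism that produces the cancellation. However, your plan for the final estimate has a genuine gap. You reduce $\E|S_n|\to 0$ to $\E[S_n]\to 0$ plus $\Var(S_n)\to 0$, and you yourself flag the variance step --- a Wick/fourth-cumulant pairing count through two copies of the boundary decomposition --- as ``the technically delicate step''; that step is not carried out, and it is exactly where the proof would have to live under your strategy. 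Worse, this route requires $\E\|N_1^{(\Delta)}\|^4<\infty$, i.e.\ Assumption (B2), which is \emph{not} among the hypotheses of the lemma: the statement assumes only (A2)--(A4), so even a completed version of your argument would prove a strictly weaker result than claimed.

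The point you are missing is that no second-moment (of $S_n$) control is needed at all. The frequency sum $\sum_j e^{-i(k+s-\ell-t+u)\omega_j}$ is deterministic, so you may first apply the orthogonality relation and \emph{then} push absolute values and the expectation through the remaining sums over $k,\ell,s,t,u$. For a typical boundary term this gives a bound of the form
\begin{equation*}
\frac{\mathfrak C}{\sqrt n}\,\E\|N_1^{(\Delta)}\|^{2}\Bigl(\sum_{s\ge 0}\|\Phi_s\|\Bigr)\Bigl(\sum_{t\ge 0}t\,\|\Phi_t\|\Bigr)\Bigl(\sum_{u\in\Z}\|\mathfrak f_u\|\Bigr),
\end{equation*}
since for fixed $(s,t,\ell,u)$ at most one $k\in\{1,\dots,n\}$ satisfies the congruence and the inner boundary index $\ell$ ranges over only $t$ values. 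The factor $\sum_t t\|\Phi_t\|$ is finite by the exponential decay of $\Phi_t=Ce^{A\Delta t}$, and $\E\|N_1^{(\Delta)}\|^2<\infty$ follows from (A2) alone. This direct $L^1$ bound, applied to each of the boundary terms separately, is precisely how the paper concludes; it avoids the variance computation entirely and uses no moment assumption beyond those stated in the lemma.
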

\begin{proof}
	Define $R_n(\omega)=I_n(\omega)-\Phi(e^{-i\omega})I_{n,N}(\omega) \Phi(e^{i\omega})^\top$ for $\omega \in [-\pi,\pi]$. We get \begin{align*}
	R_n(\omega_j)
	&=\frac{1}{2\pi n}\left(\sum_{k=1}^{n}\sum_{s=0}^{\infty} \Phi_s N_{k-s}\D \right)\left(\sum_{\ell=1}^{n}\sum_{t=0}^{\infty}\Phi_tN_{\ell-t}\D \right)^\top e^{-i(k-\ell)\omega_j}\\
	&\quad -\frac{1}{2\pi n}\left(\sum_{k=1}^{n}\sum_{s=0}^{\infty}\Phi_sN_k\D \right)\left(\sum_{\ell=1}^{n}\sum_{t=0}^{\infty}\Phi_tN_\ell\D \right)^\top e^{-i(k+s-\ell-t) \omega_j} \\
	&=\frac{1}{2\pi n}\left(\sum_{s=0}^{\infty}\sum_{t=0}^{\infty} \Phi_s \left( \left( \sum_{k=1}^{n}\sum_{\ell=1-t}^{0} -\sum_{k=1}^{n}\sum_{\ell=n-t+1}^{n}+\sum_{k=1-s}^{0}\sum_{\ell=1}^{n}+\sum_{k=1-s}^{0}\sum_{\ell=1-t}^{0} \right. \right. \right. \\
	& \quad \quad \quad  \ \left. -\sum_{k=1-s}^{0}\sum_{\ell=n-t+1}^{n}- \sum_{k=n-s+1}^{n}\sum_{\ell=1}^{n}-\sum_{k=n-s+1}^{n}\sum_{\ell=1-t}^{0}+\sum_{k=n-s+1}^{n}\sum_{\ell=n-t+1}^{n}\right)\\
	&\quad\quad \quad \quad  \ \ \ \left. \left. N_k\D N_\ell^{(\Delta)\top}e^{-i(k+s-\ell-t)\omega_j}\right)\Phi_t^\top \right)\\
	&=:\sum_{i=1}^{8}R_n^{(i)}(\omega_j).
	\end{align*}
	Thus,
	\beao
	\E\left|\frac{1}{2\sqrt n}\sum_{j=-n+1}^{n}\tr\left( \eta(\omega_j)R_n(\omega_j)\right)\right|&\leq& \sum_{i=1}^{8}\E\left|\frac{1}{2\sqrt n}\sum_{j=-n+1}^{n}\tr\left( \eta(\omega_j)R^{(i)}_n(\omega_j)\right)\right|.
	\eeao
	We have to show that these 8 components converge to zero. Since we can treat each component similarly, we only give the detailed proof for the convergence of the first term.
	
	Due to $\tr(A)\leq \|A\|_1$ for all quadratic matrices $A$, we get an upper bound for the trace of any quadratic matrix. Once again, the equivalence of all matrix norms and $\eta(\omega_j)=\sum_{u=-\infty}^{\infty}\mathfrak f_ue^{-i\omega_j u}$ yield
	\begin{eqnarray*}
		\lefteqn{\E\left|\frac{1}{2\sqrt n}\sum_{j=-n+1}^{n}\tr\left( R_n^{(1)}(\omega_j)\eta(\omega_j)\right)\right|}\\&\quad \quad\leq&\mathfrak C\E\left\|\sum_{j=-n+1}^{n} \frac{1}{\sqrt n} \frac 1 { n}\sum_{s=0}^{\infty}\sum_{t=0}^{\infty} \Phi_s  \sum_{k=1}^{n}\sum_{\ell=1-t}^{0}  N_k\D N_\ell^{(\Delta)\top}\Phi_t^\top \sum_{u=-\infty}^{\infty}\mathfrak{f}_u e^{-i(k+s-\ell-t+u)\omega_j}\right\|\\
		&\quad \quad\leq&\mathfrak C \frac 1 {\sqrt n}\sum_{s=0}^{\infty}\sum_{t=0}^{\infty}\|\Phi_s\|\sum_{k=1}^{n}\sum_{\ell=1-t}^{0}\E\|N_1^{(\Delta)}\|^2 \|\Phi_t\| \sum_{u=-\infty}^{\infty}\|\mathfrak f_u\|\frac 1 n\left\| \sum_{j=-n+1}^{n}e^{-i(k+s-\ell-t+u)\omega_j}\right\|.
	\end{eqnarray*}
	Due to $(A2)$, $\E\|N_1^{(\Delta)}\|^2<\infty.$ 	
	Further, an application of \Cref{summee} of the Supplementary Material gives
	\begin{eqnarray*}
		\E\left|\frac{1}{2\sqrt n}\sum_{j=-n+1}^{n}\tr\left( R_n^{(1)}(\omega_j)\eta(\omega_j)\right)\right|\leq\mathfrak C \frac 1 {\sqrt n} \sum_{s=0}^{\infty}\|\Phi_s\| \sum_{t=0}^{\infty} t\|\Phi_t\| \sum_{u=-\infty}^{\infty} \|\mathfrak f_u\|\overset{n\to\infty}{\longrightarrow}0.
	\end{eqnarray*}
~\hfill$\Box$
\end{proof}

This lemma helps to deduce \Cref{Prop2}, which can be seen as the main part of the proof of the asymptotic normality of the Whittle estimator.\\[2mm]

 \noindent\textit{Proof of \Cref{Prop2}.}
	Due to \Cref{Prop1}, we get
	\begin{eqnarray*}
		\lefteqn{\frac{1}{2\sqrt n}\sum_{j=-n+1}^{n}\tr\left(\eta(\omega_j)I_n(\omega_j)- \eta(\omega_j)\f(\omega_j)\right)}\\
		&=&\frac{1}{2\sqrt n}\sum_{j=-n+1}^{n}\left\{\tr\left(I_{n,N}(\omega_j)\Phi(e^{i\omega_j})^\top\eta(\omega_j)\Phi(e^{-i\omega_j})\right)-\tr\left( \eta(\omega_j)\f(\omega_j)\right)\right\}+o_\P(1).
	\end{eqnarray*}
	We define$$q(\omega):= \Phi(e^{i\omega})^\top\eta(\omega)\Phi(e^{-i\omega}), \quad \omega \in [-\pi,\pi],$$ and approximate $q$ by its Fourier series of degree $M$, namely,
	\begin{align}\label{Refb_0}
	q_M(\omega)=\sum_{|k|\leq M}b_k e^{ik\omega} \quad \text{ where } \quad b_k=\frac{1}{2\pi}\int_{-\pi}^{\pi}e^{-ik\omega} q(\omega) d\omega,\quad k \in \Z.
	\end{align}
	The coefficients $b_k$ satisfy \begin{eqnarray}
	\nonumber \sum_{k=-\infty}^{\infty}\|b_k\| |k|^{1/2}&=&\sum_{k=-\infty}^{\infty} \left\| \frac{1}{2\pi} \int_{-\pi}^{\pi}e^{-ik\omega}\Phi(e^{i\omega}) ^\top\eta(\omega)\Phi(e^{-i\omega})d\omega \right\| |k|^{1/2}\\
	\nonumber&=& {\sum_{k=-\infty}^{\infty} \left\| \frac{1}{2\pi} \sum_{j=0}^{\infty} \sum_{\ell=0}^{\infty} \sum_{u=-\infty}^{\infty} \Phi_j^\top \mathfrak{f}_u  \Phi_\ell \int_{-\pi}^{\pi} e^{-i(k-j+u+\ell)\omega}d\omega \right\| |k|^{1/2}}\\
	\nonumber&\leq& { \sum_{j=0}^{\infty} \sum_{\ell=0}^{\infty} \sum_{u=-\infty}^{\infty} \|\Phi_j\|  \|\mathfrak{f}_u\| \|\Phi_\ell\| |j-u-\ell|^{1/2}} \\
	\nonumber&\leq&{ \mathfrak C \sum_{j=0}^{\infty} \|\Phi_j\| \left(\max\{1,|j|\}\right)^{1/2} \sum_{u=-\infty}^{\infty} \|\mathfrak{f}_u\| \left(\max\{1,|u|\}\right)^{1/2} \sum_{\ell=0}^{\infty} \|\Phi_\ell\| \left(\max\{1,|\ell|\}\right)^{1/2}}\\\label{endl1/2}&<& { \infty},
	\end{eqnarray} and therefore $\sum_{k=-\infty}^{\infty} \|b_k\|<\infty$ as well. An application of \Cref{FourierKon} of the Supplementary Material leads to $$q_M(\omega)\overset{M \to \infty}{\longrightarrow}q(\omega) \quad \text{ uniformly in } \omega \in [-\pi,\pi].$$
	\textbf{Step 1:} We show \begin{align}\label{zwisch1}\lim_{M\to \infty} \limsup_{n\to \infty}\ \P\left(\frac{1}{\sqrt{n}}\left| \sum_{j=-n+1}^{n}\tr(I_{n,N}(\omega_j)(q(\omega_j)-q_M(\omega_j))\right| > \epsilon\right)=0 \quad \forall\ \epsilon>0.\end{align}
	Consider \begin{eqnarray}
	\nonumber\lefteqn{\frac{1}{\sqrt n}\sum_{j=-n+1}^{n}\tr\left(I_{n,N}(\omega_j)(q(\omega_j)-q_M(\omega_j))\right)}\\
	&=& \frac{\sqrt n}{\pi} \sum_{|k|>M} \tr\left(\sum_{h=-n+1}^{n-1} \overline \Gamma_{n,N}(h)b_k \left(\frac{1}{2n}\sum_{j=-n+1}^{n}e^{-i(h-k)\omega_j}\right)\right). \label{Step1_1}
	\end{eqnarray}
	We investigate the terms with $h=0$ and $h\neq 0$ separately.
	For $h=0$ and $n>M$ we get \begin{align}\noindent
	\left| \frac{\sqrt n}{\pi}\sum_{|k|>M}\tr\left(\overline \Gamma_{n,N}(0) b_k  \mathbf{1}_{\{\exists z \in \Z\setminus\{0\}\ :\ k=2nz\}}\right)\right|\leq{ \mathfrak{C}\sqrt n\|\overline \Gamma_{n,N}(0)\|\sum_{|k|\geq 2n}\|b_k\|  }
	\label{Step1_2}\overset{n \to \infty}{\longrightarrow}0 \quad \P\text{-a.s.,}
	\end{align}
	since \Cref{Gamma_N} in the Supplementary Material and the continuous mapping theorem imply $\|\overline \Gamma_{n,N}(0)\|\overset{a.s.}{\to} \|\Sigma_N^{(\Delta)}\|$. \\
	
	Now, we investigate the terms with $h\not=0$. The independence of the sequence $(N\D_k)_{k\in \N_0}$
	leads to \begin{align*}\E\left[\overline \Gamma_{n,N}(h)\right]=0 \quad \text{for } h \neq 0 \end{align*} and therefore, \begin{align}\label{Step1_3}
	\E\left[  {\sqrt n}\sum_{|k|>M} \tr\left(\left(\sum_{h=1}^{n-1}\overline \Gamma_{n,N}(h)+\sum_{h=-n+1}^{-1}\overline \Gamma_{n,N}(h) \right)b_k\mathbf{1}_{\{\exists z \in \Z\ :\ h=k+2nz\}}\right)\right]=0.
	\end{align}
	Due to \eqref{Step1_1}-\eqref{Step1_3} and the Tschebycheff inequality, for the proof of \eqref{zwisch1} it is sufficient to show that
    \begin{align}\label{Step1_4}\lim_{M\to \infty}\lim_{n\to \infty}\Var\left( {\sqrt n}\sum_{|k|>M} \tr\left(\left(\sum_{h=1}^{n-1}\overline \Gamma_{n,N}(h)+\sum_{h=-n+1}^{-1}\overline \Gamma_{n,N}(h) \right)b_k\mathbf{1}_{\{\exists z \in \Z\ :\ h=k+2nz\}}\right)\right)=0.\end{align}
	First, property \eqref{vectrace}
	and $\E\left\|\vecc\left(\overline{\Gamma}_{n,N}(h)\right)\vecc\left(\overline{\Gamma}_{n,N}(h)\right)^\top\right\|\leq \frac{\mathfrak{C}}{n}$
	result in
	\begin{eqnarray*}
		\lefteqn{\Var\left(  {\sqrt n}\sum_{|k|>M} \tr\left(\left(\sum_{h=1}^{n-1}\overline \Gamma_{n,N}(h)+\sum_{h=-n+1}^{-1}\overline \Gamma_{n,N}(h) \right)b_k\mathbf{1}_{\{\exists z \in \Z\ :\ h=k+2nz\}}\right)\right)}\\
		&&=\Var\left(  {2\sqrt n}\sum_{h=1}^{n-1}\vecc\left(\sum_{|k|>M}b_k^\top\mathbf{1}_{\{\exists z \in \Z\ :\ h=k+2nz\}}\right)^\top\left(I_N\otimes I_N\right)\vecc\left(\overline{\Gamma}_{n,N}(h)\right)\right)\\
		&&\leq{4n}\sum_{h=1}^{n-1}\left\|\vecc\left(\sum_{|k|>M}b_k^\top\mathbf{1}_{\{\exists z \in \Z\ :\ h=k+2nz\}}\right)\right\|^2\left\|\left(I_N\otimes I_N\right)\right\|^2 \left\|\E\left[\vecc\left(\overline{\Gamma}_{n,N}(h)\right)\vecc\left(\overline{\Gamma}_{n,N}(h)\right)^\top\right]\right\|\\
		&&\leq {\mathfrak{C}} \sum_{h=1}^{n-1}\left\|\sum_{|k|>M}b_k \mathbf{1}_{\{\exists z \in \Z\ :\ h=k+2nz\}}\right\|^2\leq  {\mathfrak{C}} \left(\sum_{|k|>M}\|b_k\|\right)^2\overset{M\to\infty}{\longrightarrow}0.
	\end{eqnarray*}\\
	\textbf{Step 2:} We show
	\begin{eqnarray}\label{step2}
	\lefteqn{\frac{1}{\sqrt n}\sum_{j=-n+1}^{n}\left(\tr\left(I_{n,N}(\omega_j)q_M(\omega_j)\right)- \tr\left(\eta(\omega_j)\f(\omega_j)\right)\right)} \nonumber\\
	&&=\frac{\sqrt n}{\pi}\tr\left(\sum_{h=-M}^{M}\left(\overline \Gamma_{n,N}(h)-\Gamma_N(h)\right)b_h\right)+o(1).
	\end{eqnarray}
	Let $M>n$. Then, due to \Cref{Veit} of the Supplementary Material and Parseval's equality, we receive
	\begin{eqnarray}
	\lefteqn{\nonumber\frac{1}{\sqrt n}\sum_{j=-n+1}^{n}\left(\tr\left(I_{n,N}(\omega_j)q_M(\omega_j)\right)- \tr\left(\eta(\omega_j)\f(\omega_j)\right)\right) }\\
	&=&\nonumber\frac{\sqrt n}{\pi}\tr\left(\sum_{h=-M}^{M}\overline\Gamma_{n,N}(h)b_h\right) -\frac {\sqrt n}{\pi} \int_{-\pi}^{\pi} \tr\left(\eta(\omega)\f(\omega)\right)d\omega\\
	&&\nonumber+\frac{\sqrt n}{\pi} \int_{-\pi}^{\pi} \tr\left(\eta(\omega)\f(\omega)\right)d\omega-\tr\left(\frac{1}{\sqrt n}\sum_{j=-n+1}^{n}\eta(\omega_j)\f(\omega_j)\right)
	\\
	&=&\label{A}\frac{\sqrt n}{\pi}\tr\left(\sum_{h=-M}^{M}\overline\Gamma_{n,N}(h)b_h\right) -\frac {\sqrt n}{\pi} \int_{-\pi}^{\pi} \tr\left(\eta(\omega)\f(\omega)\right)d\omega+o(1).
	\end{eqnarray}
	Taking $\Gamma_N(h)=0_{N\times N}$ for $h \neq 0$ into account, we receive
	\begin{eqnarray}\nonumber\lefteqn{\frac{\sqrt n}{\pi}\tr\left(\sum_{h=-M}^{M}\overline\Gamma_{n,N}(h)b_h\right)-\frac {\sqrt n}{\pi} \int_{-\pi}^{\pi} \tr\left(\eta(\omega)\f(\omega)\right)d\omega} \\
	&	=&\nonumber{\frac{\sqrt n}{\pi}\tr\left(\sum_{h=-M}^{M}\left(\overline \Gamma_{n,N}(h)-\Gamma_N(h)\right)b_h\right)}\\
	&& \label{B}+\frac{\sqrt n}{\pi}\left(\tr(\Gamma_{N}(0)b_0) -\int_{-\pi}^{\pi} \tr\left(\eta(\omega)\f(\omega)\right)d\omega\right).\end{eqnarray}
	Using the representation $\f(\omega)=\frac 1 {2\pi}\Phi(e^{-i\omega})\Sigma_N^{(\Delta)} \Phi(e^{i\omega})^\top$ and $q(\omega)=\Phi(e^{i\omega})^\top \eta(\omega)\Phi(e^{-i\omega})$ for $\omega \in [-\pi,\pi]$, yield \begin{eqnarray}
	\nonumber\lefteqn{\frac{\sqrt n}{\pi}\tr\left(\Gamma_{N}(0)b_0\right)- \tr\left(\int_{-\pi}^{\pi} \eta(\omega)\f(\omega)d\omega\right)}\\ &=&\nonumber{\frac{\sqrt n}{\pi}\int_{-\pi}^{\pi}\tr\left(\frac 1 {2\pi} \Sigma_N^{(\Delta)} q(\omega)\right)- \tr\left(\eta(\omega)\f(\omega)\right)d\omega}\\
	&=&\label{C}{\frac{\sqrt n}{\pi}\int_{-\pi}^{\pi}\tr\left( \eta(\omega)\frac 1 {2\pi}\Phi(e^{-i\omega})\Sigma_N^{(\Delta)} \Phi(e^{i\omega})^\top -\eta(\omega)\f(\omega)\right)d\omega=0.}
	\end{eqnarray}
	Then, \eqref{A}-\eqref{C} result in \eqref{step2}.~\\[2mm]
	\textbf{Step 3:} Next, we prove the asymptotic normality  \begin{align}\label{step3}\frac{\sqrt n}{2\pi}\tr\left(\sum_{h=-M}^{M}\left(\overline \Gamma_{n,N}(h)-\Gamma_N(h)\right)b_h\right)  \overset{\mathcal D}{\longrightarrow}\mathcal N(0,\Sigma_\eta(M)),\end{align} where $\Sigma_\eta(M)$ is defined as $$\Sigma_\eta(M)=\frac{1}{\pi^2}\sum_{h=1}^{M}\tr\left(b_h\Sigma_N^{(\Delta)} b_{h}^H \Sigma_N^{(\Delta)}\right)+\frac1 {4\pi^2}\vecc(b_0^\top)^\top\left(\E\left[N_1^{(\Delta)}N_1^{(\Delta)\top}\otimes N_1^{(\Delta)}N_1^{(\Delta)\top}\right]-\Sigma_N^{(\Delta)}\otimes \Sigma_N^{(\Delta)}\right)\vecc(b_0^H).$$ Therefore, we consider
	\begin{eqnarray}\nonumber
	\lefteqn{\frac{\sqrt n}{2\pi}\tr\left(\sum_{h=-M}^{M}\left(\overline \Gamma_{n,N}(h)-\Gamma_N(h)\right)b_h\right)}\\
	\label{b0Zerlegung}&&=\frac{1}{\pi}\sum_{h=1}^{M}\sqrt n\tr\left(\left(\overline \Gamma_{n,N}(h)-\Gamma_N(h)\right)b_h\right)+\frac{\sqrt n}{2\pi}\tr\left(\left(\overline \Gamma_{n,N}(0)-\Gamma_N(0)\right)b_0\right).\end{eqnarray}
	Writing $$\sqrt n\tr\left(\left(\overline \Gamma_{n,N}(h)-\Gamma_N(h)\right)b_h\right)=\sqrt n \vecc(b_h^\top)^\top \vecc\left( \overline \Gamma_{n,N}(h)-\Gamma_N(h)\right),$$
	an application of \Cref{asyNGamma} of the Supplementary Material leads to
	\begin{align*}
	\sqrt n\tr\left(\left(\overline \Gamma_{n,N}(h)-\Gamma_N(h)\right)b_h\right)\overset{\mathcal D}{\longrightarrow}\mathcal{N}_h,
	\end{align*} where $(\mathcal{N}_h)_{h\in\N_0}$ is an independent centered normally distributed sequence of random vectors with covariance matrix $$ \Sigma_{\mathcal{N}_h}:=\vecc(b_h^\top)^\top\left(\Sigma_N^{(\Delta)} \otimes \Sigma_N^{(\Delta)}\right) \vecc(b_h^H)=\tr\left(b_h\Sigma_N^{(\Delta)} b_{h}^H \Sigma_N^{(\Delta)}\right) \quad \text{for } h \neq 0$$ and
	$$\Sigma_{\mathcal{N}_0}:=\vecc(b_0^\top)^\top\left(\E\left[N_1^{(\Delta)}N_1^{(\Delta)\top}\otimes N_1^{(\Delta)}N_1^{(\Delta)\top}\right]-\Sigma_N^{(\Delta)}\otimes \Sigma_N^{(\Delta)}\right)\vecc(b_0^H) .$$
	Finally, 
	\begin{align*}
	&\frac{\sqrt n}{2\pi}\tr\left(\sum_{h=-M}^{M}\left(\overline \Gamma_{n,N}(h)-\Gamma_N(h)\right)b_h\right)\overset{\mathcal{D}}{\longrightarrow}\mathcal{N}\left(0,\frac{1}{\pi^2}\sum_{h=1}^{M}\tr\left(b_h\Sigma_N^{(\Delta)} b_{h}^H \Sigma_N^{(\Delta)}\right)\right.\\
	&\qquad\qquad \qquad\qquad\quad\ \ +\left.\frac1 {4\pi^2}\vecc(b_0^\top)^\top\left(\E\left[N_1^{(\Delta)}N_1^{(\Delta)\top}\otimes N_1^{(\Delta)}N_1^{(\Delta)\top}\right]-\Sigma_N^{(\Delta)}\otimes \Sigma_N^{(\Delta)}\right)\vecc(b_0^H)\right).\end{align*}
	\textbf{Step 4:} We show \begin{align}
	&\nonumber\frac{1}{\pi^2}\sum_{h=1}^{M}\tr\left(b_h\Sigma_N^{(\Delta)} b_{h}^H \Sigma_N^{(\Delta)}\right)+\frac1 {4\pi^2}\vecc(b_0^\top)^\top\left(\E\left[N_1^{(\Delta)}N_1^{(\Delta)\top}\otimes N_1^{(\Delta)}N_1^{(\Delta)\top}\right]-\Sigma_N^{(\Delta)}\otimes \Sigma_N^{(\Delta)}\right)\vecc(b_0^H)\\
	\overset{M\to \infty}{\longrightarrow}&\nonumber\frac{1}{\pi}\int_{-\pi}^{\pi}\tr\left(\eta(\omega)\f(\omega)\eta(\omega) \f(\omega)\right)d\omega+ \frac{1}{16\pi^4}\int_{-\pi}^{\pi}\vecc\left(\Phi(e^{-i\omega})^\top\eta(\omega)^\top\Phi(e^{i\omega})\right)^\top d\omega\\
	&\label{step4}\quad\quad\left(\E\left[N_1^{(\Delta)}N_1^{(\Delta)\top}\otimes N_1^{(\Delta)}N_1^{(\Delta)\top}\right]-3\Sigma_N^{(\Delta)}\otimes \Sigma_N^{(\Delta)}\right)\int_{-\pi}^{\pi}\vecc\left(\Phi(e^{i\omega})^\top\eta(\omega)\Phi(e^{-i\omega})\right)d\omega.
	\end{align}
	Therefore, note that
	\begin{align*}
	&\frac{1}{\pi^2}\sum_{h=1}^{M}\tr\left(b_h\Sigma_N^{(\Delta)} b_{h}^H \Sigma_N^{(\Delta)}\right)+\frac1 {4\pi^2}\vecc(b_0^\top)^\top\left(\E\left[N_1^{(\Delta)}N_1^{(\Delta)\top}\otimes N_1^{(\Delta)}N_1^{(\Delta)\top}\right]-\Sigma_N^{(\Delta)}\otimes \Sigma_N^{(\Delta)}\right)\vecc(b_0^H)\\
	&\quad\overset{M\to \infty}{\longrightarrow}\frac{1}{\pi^2}\sum_{h=1}^{\infty}\tr\left(b_h\Sigma_N^{(\Delta)} b_{h}^H \Sigma_N^{(\Delta)}\right) +\frac1 {4\pi^2}\vecc(b_0^\top)^\top\left(\E\left[N_1^{(\Delta)}N_1^{(\Delta)\top}\otimes N_1^{(\Delta)}N_1^{(\Delta)\top}\right]-\Sigma_N^{(\Delta)}\otimes \Sigma_N^{(\Delta)}\right)\vecc(b_0^H).
	\end{align*}
	But
	\begin{eqnarray*}
		\frac{1}{2\pi}\sum_{h=-\infty}^{\infty} \tr\left(b_h\Sigma_N^{(\Delta)}b_h^H\Sigma_N^{(\Delta)}\right)&=&\frac{1}{4\pi^2}\sum_{h=-\infty}^{\infty}\sum_{\ell=-\infty}^{\infty}\tr\left(b_h
		\Sigma_N^{(\Delta)}b_\ell^H\Sigma_N^{(\Delta)}\right)\int_{-\pi}^{\pi}e^{i(h-\ell)\omega}d\omega\\
		&=&\frac{1}{4\pi^2}\int_{-\pi}^{\pi}\tr\left(q(\omega)\Sigma_N^{(\Delta)}q(\omega)^H\Sigma_N^{(\Delta)}\right)d\omega\\
		&=&\int_{-\pi}^{\pi}\tr\left(\eta(\omega)\f(\omega)\eta(\omega)^H \f(\omega)\right)d\omega,
	\end{eqnarray*}
	where we plugged in the definition of $q$ in the last equality.
	Eventually, due to the representation of $b_0$, we receive
	\begin{eqnarray*}
		\lefteqn{\frac{1}{\pi^2}\sum_{h=1}^{\infty}\tr\left(b_h\Sigma_N^{(\Delta)} b_{h}^H \Sigma_N^{(\Delta)}\right)+\frac1 {4\pi^2}\vecc(b_0^\top)^\top\left(\E\left[N_1^{(\Delta)}N_1^{(\Delta)\top}\otimes N_1^{(\Delta)}N_1^{(\Delta)\top}\right]-\Sigma_N^{(\Delta)}\otimes \Sigma_N^{(\Delta)}\right)\vecc(b_0^H)}\\
		&=&\frac{1}{\pi}\int_{-\pi}^{\pi}\tr\left(\eta(\omega)\f(\omega)\eta(\omega) \f(\omega)\right)d\omega+ \frac{1}{16\pi^4}\int_{-\pi}^{\pi}\vecc\left(\Phi(e^{-i\omega})^\top\eta(\omega)^\top\Phi(e^{i\omega})\right)^\top d\omega\\
		&&\left(\E\left[N_1^{(\Delta)}N_1^{(\Delta)\top}\otimes N_1^{(\Delta)}N_1^{(\Delta)\top}\right]-3\Sigma_N^{(\Delta)}\otimes \Sigma_N^{(\Delta)}\right)\int_{-\pi}^{\pi}\vecc\left(\Phi(e^{i\omega})^\top\eta(\omega)\Phi(e^{-i\omega})\right)d\omega.
	\end{eqnarray*}
	Finally, Step 3, Step 4 and a multivariate version of Problem 6.16 of \cite{BrockwellDavis} give
	$$\frac{\sqrt n}{2\pi}\tr\left(\sum_{h=-M}^{M}\left(\overline \Gamma_{n,N}(h)-\Gamma_N(h)\right)b_h\right)
	\overset{\mathcal D,n\to\infty}{\longrightarrow}\mathcal N(0,\Sigma_\eta(M))
	\overset{\mathcal D,M\to\infty}{\longrightarrow}\mathcal N(0,\Sigma_\eta).$$  Along with Step 1, Step 2 and  Proposition 6.3.9 of \cite{BrockwellDavis}, the statement follows.
\hfill$\Box$\\[1mm]

\noindent \textit{Proof of \Cref{abl1W}.}
	The proof is based on the Cram\'er Wold Theorem and \Cref{Prop2}. Therefore, let $\lambda=(\lambda_1,\ldots,\lambda_r)^\top \in \R^r$.
	We obtain
	\begin{align*}
	\sqrt{n}\left[\nabla_\vartheta W_n(\vartheta_0)\right]\lambda=&\frac 1{2\sqrt n}\sum_{j=-n+1}^{n}\nabla_\vartheta\left[\tr\left(\f(\omega_j,\vartheta_0)^{-1} I_n(\omega_j)\right)+\log(\det(\f(\omega_j,\vartheta_0)))\right]\lambda\\
	=&\frac 1{2\sqrt n}\sum_{j=-n+1}^{n}\left[\sum_{t=1}^{r}\tr\left(-\lambda_t\f(\omega_j)^{-1}\left( \frac{\partial}{\partial \vartheta_t}\f(\omega_j,\vartheta_0)\right)\f(\omega_j)^{-1} I_n(\omega_j)\right)\right]\\
	&+\frac{1}{2\sqrt n}\sum_{j=-n+1}^{n}\nabla_\vartheta[\tr(\log(\f(\omega_j,\vartheta_0)))]\lambda.\end{align*}
	We define the matrix function $\eta_\lambda:[-\pi,\pi]\to \C^{m\times m}$ as \begin{eqnarray}\label{eta} \eta_\lambda(\omega)=-\sum_{t=1}^{r}\lambda_t\f(\omega)^{-1}\left( \frac{\partial}{\partial \vartheta_t}\f(\omega,\vartheta_0)\right)\f(\omega)^{-1},  \quad    \omega \in [-\pi,\pi].\end{eqnarray}
	Furthermore, \
	\begin{align*} &\tr\left(\frac {\partial}{\partial \vartheta_t } \log\left(\f(\omega,\vartheta_0)\right)\right)=\tr\left(\f(\omega)^{-1}\left(\frac {\partial}{\partial \vartheta_t}\f(\omega,\vartheta_0)\right)\right).\end{align*}
	Then,
	\begin{align*}
	\sqrt n \left[\nabla_\vartheta W_n(\vartheta_0)\right]\lambda
	=&\frac{1}{2\sqrt n}\sum_{j=-n+1}^{n}\tr\left(\eta_\lambda(\omega_j)\left(I_n(\omega_j)-\f(\omega_j)\right)\right).
	\end{align*}
	Apparently, $\eta_\lambda$ is two times continuously differentiable by \Cref{BemAsyNorm} and $2\pi$ periodic. Moreover, every component of the Fourier coefficients $(\mathfrak f_{\lambda,u})_{u \in \Z}$ of $\eta_\lambda$ satisfies $\sum_{u=-\infty}^{\infty} |\mathfrak f_{\lambda,u}[k,\ell]| |u|^{1/2} <\infty, k,\ell \in \{1,\ldots, m\}$ (see \cite{BrockwellDavis}, Exercise 2.22 applied to $\eta_\lambda$ and its derivative $ \eta_\lambda'$), and therefore,\linebreak
	$\sum_{u=-\infty}^{\infty} \|\mathfrak f_{\lambda,u}\| |u|^{1/2} <\infty$ follows. Then, due to \Cref{Prop2}, we get as $n\to\infty$,
	\begin{align*}
	\sqrt{n}\left[\nabla_\vartheta W_n(\vartheta_0)\right]\lambda \overset{\mathcal D}{\longrightarrow}\mathcal N(0,\Sigma_{\lambda^\top\nabla_\vartheta W}),
	\end{align*} where \begin{eqnarray*}\Sigma_{\lambda^\top\nabla_\vartheta W}&=&\frac{1}{\pi}\int_{-\pi}^{\pi}\tr\left(\eta_\lambda(\omega)\f(\omega)\eta_\lambda(\omega) \f(\omega)\right)d\omega
		\\
		&&+\frac{1}{16\pi^4}\int_{-\pi}^{\pi}\vecc\left(\Phi(e^{-i\omega})^\top\eta_\lambda(\omega)^\top\Phi(e^{i\omega})\right)^\top d\omega\\
		&&\qquad\cdot\left(\E\left[N_1^{(\Delta)}N_1^{(\Delta)\top}\otimes N_1^{(\Delta)}N_1^{(\Delta)\top}\right]-3\Sigma_N^{(\Delta)}\otimes \Sigma_N^{(\Delta)}\right)\int_{-\pi}^{\pi}\vecc\left(\Phi(e^{i\omega})^\top\eta_\lambda(\omega)\Phi(e^{-i\omega})\right)d\omega\\
		&=:&\Sigma_{\lambda,1}+\Sigma_{\lambda,2}+\Sigma_{\lambda,3}.\end{eqnarray*}We investigate the three terms separately. With \eqref{vectrace}, the first term fulfills the representation
	\begin{eqnarray*}
		\Sigma_{\lambda,1}
		&=&\frac{1}{\pi}\int_{-\pi}^{\pi}\tr\left(\left(\sum_{t=1}^{r}\lambda_t \frac{\partial}{\partial \vartheta_t}\f(\omega,\vartheta_0)\right)\f(\omega)^{-1}\left(\sum_{s=1}^{r}\lambda_s \frac{\partial}{\partial \vartheta_s}\f(\omega,\vartheta_0)\right)\f(\omega)^{-1}\right)d\omega\\
		&=&\lambda^\top\left[\frac{1}{\pi}\int_{-\pi}^{\pi}\left(\nabla_\vartheta\f(-\omega,\vartheta_0)\right)^\top\left(\f(-\omega)^{-1}\otimes \f(\omega)^{-1}\right)\nabla_\vartheta\f(\omega,\vartheta_0)\right]\lambda.
	\end{eqnarray*}
	Similarly, we get the representation \begin{eqnarray*}
		\lefteqn{\Sigma_{\lambda,2}=
			\frac{\lambda^\top}{16\pi^4}\left[\int_{-\pi}^{\pi}\int_{-\pi}^{\pi}\E\left[\nabla_\vartheta\f(-\omega,\vartheta_0)^\top\left(\f(-\omega)^{-1}\Phi(e^{i\omega})\otimes \f(\omega)^{-1}\Phi(e^{-i\omega})\right)N_1^{(\Delta)}N_1^{(\Delta)\top}\right.\right.}\\
		&&\left.\left.\quad\quad\quad\quad\otimes N_1^{(\Delta)}N_1^{(\Delta)\top}\left(\Phi(e^{-i\tau})^\top\f(-\tau)^{-1}\otimes \Phi(e^{i\tau})^\top\f(\tau)^{-1}\right)\nabla_\vartheta\f(\tau,\vartheta_0)\right]d\omega d\tau\right]\lambda
	\end{eqnarray*}
	for the second term, and analogously
	\begin{align*}
	&\Sigma_{\lambda,3}=-\frac{3\lambda^\top}{16\pi^4}\left[
	\int_{-\pi}^{\pi}\int_{-\pi}^{\pi}\nabla_\vartheta\f(-\omega,\vartheta_0)^\top\left(\f(-\omega)^{-1}\Phi(e^{i\omega})\Sigma_N^{(\Delta)}\Phi(e^{-i\tau})^\top\f(-\tau)^{-1}\right)\right.\\
	&\left.\quad\quad\quad\quad\quad\quad\quad\quad\otimes\left(\f(\omega)^{-1}\Phi(e^{-i\omega})\Sigma_N^{(\Delta)}\Phi(e^{i\tau})^\top\f(\tau)^{-1}\right)\nabla_\vartheta\f(\tau,\vartheta_0)d\omega d\tau \right]\lambda
	\end{align*}
	for the third term.\hfill$\Box$~\\

\noindent\textit{Proof of \Cref{SatzAsyN}.}
Since  $\widehat\vartheta^{(\Delta)}_n \overset{a.s.}{\longrightarrow}\vartheta_0$ (see \Cref{satz1}) and $\Sigma_{\nabla^2W}$ is positive definite (see  \Cref{positiveSigmanabla2})
the conclusion follows from \eqref{taylor1}, \Cref{remarkunif} and \Cref{abl1W}.
\hfill \ensuremath{\Box}~\\[1mm]

\noindent\textit{Sketch of the proof of \Cref{Remark 3.6}} \label{Subsection:Remark:3.6}
	Let $\Phi_Z$ be the polynomial of the (existing) VAR$(\infty)$ of the \linebreak VARMA$(p,q)$ process.
	\Cref{Prop2}  can be formulated for VARMA processes.
	As in the proof of \Cref{SatzAsyN} we have to plug in there for $\eta$ the function $\eta_\lambda$ as given in \eqref{eta}.
	Then, $b_0$ in \eqref{Refb_0}  has for the VARMA process $(Z_n)_{n\in\N}$ the form
	\begin{eqnarray*}
		b_0	&=&\int_{-\pi}^{\pi}-2\pi \sum_{t=1}^{r}\lambda_t \Sigma_e^{-1}\Phi_Z(e^{-i\omega})^{-1}\left( \frac{\partial}{\partial \vartheta_t}f_Z(\omega,\vartheta_0)\right) \Phi_Z(e^{i\omega})^{\top -1}\Sigma_e^{-1} d\omega\\
		&=&-\Sigma_e^{-1}\int_{-\pi}^{\pi}\sum_{t=1}^{r}\lambda_t \frac{\partial}{\partial \vartheta_t}\log\left(\Phi_Z(e^{-i\omega},\vartheta_0)\right) d\omega-\int_{-\pi}^{\pi} \left(\sum_{t=1}^{r}\lambda_t \frac{\partial}{\partial \vartheta_t}\log\left(\Phi_Z(e^{i\omega},\vartheta_0)\right)\right)^{\top} d\omega\ \Sigma_e^{-1}.
	\end{eqnarray*}
	If $\Phi_Z$ is two times differentiable, the Leibniz rule yield
	\begin{align*}	
	b_0=-\Sigma_e^{-1}\sum_{t=1}^{r}\lambda_t \frac{\partial}{\partial \vartheta_t}\int_{-\pi}^{\pi}\log\left(\Phi_Z(e^{-i\omega},\vartheta_0)\right) d\omega- \left[\sum_{t=1}^{r}\lambda_t \frac{\partial}{\partial \vartheta_t}\int_{-\pi}^{\pi}\log\left(\Phi_Z(e^{i\omega},\vartheta_0)\right) d\omega\right]^{\top} \Sigma_e^{-1}.
	\end{align*}
	Similarly to the proof of Theorem 5.8.1 of \cite{BrockwellDavis}, one can show that the integrals are constant and therefore, that $b_0=0$. For a more detailed approach, we refer to \cite{DunsmuirHannan76}.
\hfill$\Box$

~\hfill$\Box$
\end{proof}
The proof of \Cref{normality2} now matches the proof of \Cref{SatzAsyN}, where \Cref{abl1W} is replaced by \Cref{abl1Wadj} and \Cref{remarkunif} is replaced by \Cref{paadjW}.

\section{Auxiliary Results}
\subsection{Fourier Analysis}
Since all the previous sections make use of Fourier analysis, we state the required basic results. The first property also gives a motivation why the Whittle estimator is based on the frequencies $\{-\frac{\pi(n-1)}{n},\ldots, {\pi}\}$.
\begin{lemma}\label{summee}
	Let $h \in \Z$. Then, $$\frac 1 {2n}\sum_{j=-n+1}^{n}e^{-ih\omega_j}=\Ind_{\{\exists z\in \Z:\ h=2zn\}}.$$
\end{lemma}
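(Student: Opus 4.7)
The plan is to recognize this as a standard discrete orthogonality / roots-of-unity identity and to prove it by a direct geometric-series computation, splitting into two cases according to whether the common ratio equals one.

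First I would set $z := e^{-ih\pi/n}$ so that $e^{-ih\omega_j} = z^j$ and the sum becomes $\sum_{j=-n+1}^{n} z^j$. The case split is then dictated by whether $z=1$. The equation $z=1$ is equivalent to $-h\pi/n \in 2\pi\Z$, i.e.\ to the existence of some $z^\star\in\Z$ with $h=2nz^\star$, which is exactly the indicator condition on the right-hand side.

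In the case $z=1$, every summand equals $1$, the sum equals $2n$, and dividing by $2n$ yields $1$, matching the indicator. In the remaining case $z\neq 1$ I would factor out $z^{-n+1}$ and apply the finite geometric series formula to obtain
\[
\sum_{j=-n+1}^{n} z^j \;=\; z^{-n+1}\,\frac{z^{2n}-1}{z-1}.
\]
The key observation is that $z^{2n}=e^{-2\pi i h}=1$ for every $h\in\Z$, so the numerator vanishes and the whole sum is $0$, again matching the indicator.

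There is no real obstacle here; the only point that warrants care is the equivalence between $z=1$ and $h\in 2n\Z$ (rather than $h\in n\Z$), which comes from the factor $\pi$ (not $2\pi$) in the definition $\omega_j=\pi j/n$. Once that is noted, the two cases combine into the stated identity.
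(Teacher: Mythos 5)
Your proof is correct: the substitution $z=e^{-ih\pi/n}$, the case split on $z=1$ (equivalent to $h\in 2n\Z$ precisely because $\omega_j=\pi j/n$ rather than $2\pi j/n$), and the geometric-series computation with $z^{2n}=e^{-2\pi i h}=1$ are all valid. The paper states this lemma without proof as a standard discrete orthogonality fact, and your argument is exactly the standard verification one would supply.
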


We now introduce results which show that an appropriate approximation of the Fourier series exhibit useful convergence properties.
\begin{lemma} \label{FourierKon}
	Let $g:[-\pi,\pi]\to \C$ be continuous. Define $$b_k:= \frac 1 {2\pi}\int_{-\pi}^{\pi} g(\omega)e^{-ik\omega}d\omega\quad\text{ and }\quad q_M(\omega)=\sum_{|k|\leq M}b_ke^{ik\omega}. $$ Suppose that $\sum_{|k|\leq n}|b_k|$ converges. Then $$\sup_{\omega\in [-\pi,\pi]}|q_M(\omega)-g(\omega)|\overset{M \to \infty}{\longrightarrow}0. $$
\end{lemma}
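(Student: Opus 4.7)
The plan is to decouple uniform convergence of the partial sums from the identification of the limit. The absolute summability hypothesis immediately gives the first half, and Fejér's theorem (already invoked in the paper as \Cref{Fejer}) will pin down the limit as $g$.

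First I would apply the Weierstrass M-test: the pointwise bound $|b_k e^{ik\omega}|\leq|b_k|$ together with $\sum_{k\in\Z}|b_k|<\infty$ shows that $(q_M)$ is a Cauchy sequence in the sup norm on $[-\pi,\pi]$. Hence there exists a continuous function $\widetilde g:[-\pi,\pi]\to\C$ with $\sup_{\omega\in[-\pi,\pi]}|q_M(\omega)-\widetilde g(\omega)|\to 0$ as $M\to\infty$. It remains to show $\widetilde g=g$.

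For this identification I would compute the Fourier coefficients of $\widetilde g$. Uniform convergence allows the exchange of limit and integral, so for every $\ell\in\Z$,
\[
\frac{1}{2\pi}\int_{-\pi}^{\pi}\widetilde g(\omega)e^{-i\ell\omega}d\omega=\lim_{M\to\infty}\frac{1}{2\pi}\int_{-\pi}^{\pi}q_M(\omega)e^{-i\ell\omega}d\omega=b_\ell,
\]
by orthogonality of the trigonometric system. Thus $g$ and $\widetilde g$ are two continuous ($2\pi$-periodic) functions with the same Fourier coefficients. The Cesàro means $\sigma_M(g)$ converge uniformly to $g$ by Fejér's theorem (the content of \Cref{Fejer} in the supplementary material), while on the other hand the Cesàro averages of the uniformly convergent sequence $(q_M)$ converge uniformly to $\widetilde g$. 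Since $\sigma_M(g)=\frac{1}{M+1}\sum_{j=0}^{M}q_j$, the two limits must coincide, which forces $g=\widetilde g$ and completes the proof.

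The argument is essentially routine; the only subtle point, and the one to be careful about, is that the identification of the uniform limit with $g$ really does require invoking uniqueness of Fourier series for continuous functions. I would phrase this via Fejér rather than via Weierstrass' trigonometric approximation theorem, both because Fejér is already available in the supplementary material as \Cref{Fejer} and because the Cesàro-mean comparison sidesteps any question about the boundary values $g(-\pi),g(\pi)$: the continuous function $\widetilde g$ arising as a uniform limit of trigonometric polynomials is automatically $2\pi$-periodic, and the same must then be true of $g$ in the sense relevant to the applications in the paper.
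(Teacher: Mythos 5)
Your proof is correct. The paper itself gives no argument here — it simply cites K\"orner, \emph{Fourier Analysis}, Theorem~3.1 — and what you have written is precisely the standard proof of that cited result: the M-test produces a continuous uniform limit $\widetilde g$ of the partial sums, and Fej\'er's theorem (already available as the paper's parametrized version) identifies $\widetilde g$ with $g$ via the observation that the Ces\`aro means of the Fourier series are the averages of the $q_j$ and hence converge to both $g$ and $\widetilde g$. You are also right to flag the one genuine subtlety, namely that the conclusion forces $g(-\pi)=g(\pi)$, so periodicity is an implicit hypothesis; this is harmless for the functions to which the lemma is applied in the paper. (The computation of the Fourier coefficients of $\widetilde g$ is not actually needed once you make the Ces\`aro comparison, but it does no harm.)
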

\begin{proof}
	\cite{korner1989fourier}, Theorem 3.1.~\hfill$\Box$
\end{proof}
The assumptions of the previous result are quite strong. If we replace the truncated Fourier series by its Ces\`aro sum, we receive an approximation which exhibits uniform convergence without assuming that the Fourier coefficients are absolute summable. This result is known as Fej\'ers Theorem. Since we want to approximate a parametrized function, we have to adjust Fej\'ers Theorem to a setting which allows a dependency on a second parameter.

\begin{lemma}\label{Fejer}
	Let $\Theta$ be a compact parameter space and $g$ be a continuous real valued function on $[-\pi,\pi]\times \Theta$. Then, the Fourier series of $g$ in the first component is Ces\`aro summable. Further, define the Fourier coefficients $b_k(\vartheta):= \frac 1 {2\pi}\int_{-\pi}^{\pi} g(\omega,\vartheta)e^{ik\omega}d\omega$ and \begin{align*}q_M(\omega,\vartheta)&=\frac{1}{M}\sum_{j=0}^{M-1}\left(\sum_{|k|\leq j}{b}_k(\vartheta) e^{-ik\omega} \right)=\sum_{|k|<M}\left(1-\frac{|k|}{M}\right)b_k(\vartheta)e^{-ik\omega}. \end{align*}
	Then, $$\lim_{M\to \infty} \sup_{\omega \in [-\pi,\pi]}\sup_{\vartheta \in \Theta} |q_M(\omega,\vartheta)-g(\omega,\vartheta)|=0.$$
\end{lemma}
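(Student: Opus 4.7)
\noindent\textit{Proof proposal.}
The plan is to give a parametric version of the classical Fejér kernel argument. First, I would rewrite the Cesàro sum $q_M(\omega,\vartheta)$ as a convolution with the Fejér kernel
$$F_M(t)=\frac{1}{M}\left(\frac{\sin(Mt/2)}{\sin(t/2)}\right)^2,$$
by interchanging the finite sum in $k$ with the integral defining $b_k(\vartheta)$. This gives the representation
$$q_M(\omega,\vartheta)=\frac{1}{2\pi}\int_{-\pi}^{\pi}g(\omega-t,\vartheta)F_M(t)\,dt,$$
where $g$ is extended $2\pi$-periodically in its first argument (which is permissible because we only need the uniform bound on $[-\pi,\pi]\times\Theta$). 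Using the standard properties $F_M\geq 0$, $\frac{1}{2\pi}\int_{-\pi}^{\pi}F_M(t)\,dt=1$, and $\int_{|t|\geq\delta}F_M(t)\,dt\to 0$ as $M\to\infty$ for each fixed $\delta>0$, I then obtain
$$q_M(\omega,\vartheta)-g(\omega,\vartheta)=\frac{1}{2\pi}\int_{-\pi}^{\pi}\bigl[g(\omega-t,\vartheta)-g(\omega,\vartheta)\bigr]F_M(t)\,dt.$$

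The crucial new ingredient compared with the classical (non-parametric) statement is \emph{joint} uniform continuity in $(\omega,\vartheta)$. Here I would invoke compactness: since $[-\pi,\pi]\times\Theta$ is compact and $g$ is continuous, $g$ is uniformly continuous on this product set. Hence, for any $\varepsilon>0$, there is $\delta>0$ such that $|g(\omega-t,\vartheta)-g(\omega,\vartheta)|<\varepsilon$ whenever $|t|<\delta$, uniformly in $\omega\in[-\pi,\pi]$ and $\vartheta\in\Theta$. Splitting the integral at $|t|=\delta$ and using $\|g\|_\infty:=\sup_{(\omega,\vartheta)\in[-\pi,\pi]\times\Theta}|g(\omega,\vartheta)|<\infty$ (again by continuity on a compact set) then yields
$$\sup_{\omega,\vartheta}|q_M(\omega,\vartheta)-g(\omega,\vartheta)|\leq \varepsilon+\frac{\|g\|_\infty}{\pi}\int_{|t|\geq\delta}F_M(t)\,dt.$$

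Letting $M\to\infty$ gives $\limsup_{M\to\infty}\sup_{\omega,\vartheta}|q_M(\omega,\vartheta)-g(\omega,\vartheta)|\leq\varepsilon$, and then $\varepsilon\downarrow 0$ completes the argument. The only step requiring genuine care is verifying that $\delta$ and $\|g\|_\infty$ can be chosen independently of $\vartheta$, which is exactly where compactness of $\Theta$ and joint uniform continuity enter; the remaining manipulations are the standard Fejér kernel estimates that are already pointwise in $\vartheta$ and therefore transfer without modification.
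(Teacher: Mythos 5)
Your argument is correct and is essentially the approach the paper intends: the paper's proof consists of a single sentence deferring to Theorem 2.11.1 of Brockwell and Davis, which is exactly the Fej\'er kernel convolution argument you spell out, with uniformity in $\vartheta$ obtained from joint uniform continuity on the compact set $[-\pi,\pi]\times\Theta$. The only point to be careful about is your parenthetical on the periodic extension: the Fej\'er argument needs $g(-\pi,\vartheta)=g(\pi,\vartheta)$ (otherwise the extension is discontinuous and uniform convergence fails near $\pm\pi$); this is implicit in the lemma's phrase ``the Fourier series of $g$'' and holds in the paper's application, where $g$ is $f_Y^{(\Delta)}(\cdot,\vartheta)^{-1}$, a $2\pi$-periodic function of $\omega$, so it is worth stating the periodicity hypothesis explicitly rather than asserting that the extension is harmless.
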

\begin{proof}
	The proof is similar to the proof of Theorem 2.11.1 of \cite{BrockwellDavis} and therefore skipped.~\hfill$\Box$
\end{proof}

\begin{remark}
	If we investigate the Ces\`aro sum of a Fourier series of a matrix valued continuous function $g: [-\pi,\pi]\times \Theta \to \R^{N\times N}$ defined by  $$q_M(\omega):=\frac{1}{M}\sum_{j=0}^{M-1}\left(\sum_{|k|\leq j}{b}_k e^{-ik\omega} \right) \quad \text{ where } \quad b_k:=\frac{1}{2\pi}\int_{-\pi}^{\pi} g(\omega) e^{-ik\omega}d\omega,$$
	Fej\' ers Theorem gives the uniform convergence of each component of $q_M$ to $g$ on $[-\pi,\pi]\times \Theta$. Since $g$ consists of finitely many components, $q_M$ also converges to $g$ uniformly. Obviously, the same holds true for any matrix valued continuous  function  $g: \R\times \Theta \to \R^{N\times N}$ which is $2\pi$ periodic in its first component.
	Similarly, we can transfer \Cref{FourierKon} to matrix valued functions.
\end{remark}
\subsection{The behavior of the sample autocovariance}

We state and prove results concerning the asymptotic behavior of the estimators of the various arising covariance matrices. 
\begin{lemma}\label{ewertmpAC}
	Define the empirical sample autocovariance function
	$$\overline\Gamma_n^{(\Delta)}(h)=\frac1n\sum_{k=1}^{n-h}Y^{(\Delta)}_{k+h}Y^{(\Delta)\top}_{k} \quad \text{ and } \quad  \overline\Gamma_n^{(\Delta)}(-h)=\overline\Gamma_n^{(\Delta)}(h)^\top,  \quad 0\leq h\leq n.$$ Suppose $(A2)$ and $(A3)$ hold. Then,
	for $h\in\Z$ and $n\to\infty$,
	\begin{align*}
	\overline\Gamma_n^{(\Delta)}(h)\overset{a.s.}{\longrightarrow}\Gamma^\Delta(h)
	\end{align*}
	and $\sum_{h=-\infty}^{\infty}\|\Gamma^{(\Delta)}(h)\|<\infty$.
\end{lemma}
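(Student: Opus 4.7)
The plan is to handle the two claims separately, using the MA$(\infty)$ representation $Y_k^{(\Delta)}=\sum_{j=0}^\infty \Phi_j N_{k-j}^{(\Delta)}$ with $\Phi_j=Ce^{A\Delta j}$ recalled in \Cref{sec:preliminaries}, together with the fact that $(N_k^{(\Delta)})_{k\in\Z}$ is i.i.d.\ with covariance $\Sigma_N^{(\Delta)}$ under $(A2)$ and $(A3)$.

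For the summability $\sum_{h\in\Z}\|\Gamma^{(\Delta)}(h)\|<\infty$, I would exploit the exponential decay of the MA coefficients. Assumption $(A3)$ gives constants $\mathfrak{C}>0$ and $\alpha>0$ with $\|e^{A\Delta j}\|\leq \mathfrak{C}e^{-\alpha j}$ for all $j\geq 0$, hence $\|\Phi_j\|\leq \mathfrak{C}\|C\|e^{-\alpha j}$. Using the orthogonality of the i.i.d.\ innovations, for $h\geq 0$,
$$\Gamma^{(\Delta)}(h)=\sum_{j=0}^\infty \Phi_{j+h}\Sigma_N^{(\Delta)}\Phi_j^\top,$$
and sub-multiplicativity of $\|\cdot\|$ yields $\|\Gamma^{(\Delta)}(h)\|\leq \mathfrak{C}'e^{-\alpha h}$ for some constant $\mathfrak{C}'$, which is geometrically summable in $h$. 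The case $h<0$ follows from $\Gamma^{(\Delta)}(-h)=\Gamma^{(\Delta)}(h)^\top$.

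For the almost sure convergence, I would invoke ergodicity. The stationary output $Y^{(\Delta)}$ is a measurable functional of the two-sided i.i.d.\ sequence $(N_k^{(\Delta)})_{k\in\Z}$ through the absolutely summable MA$(\infty)$ filter, so $Y^{(\Delta)}$ is a Bernoulli shift and hence strictly stationary and ergodic. Consequently, for fixed $h\in\Z$ the sequence $(Y_{k+h}^{(\Delta)}Y_k^{(\Delta)\top})_{k\in\N}$ is also strictly stationary and ergodic, and because $\E\|Y_1^{(\Delta)}\|^2<\infty$ (as $\Sigma_L$ exists by $(A2)$ and the MA coefficients are exponentially summable by $(A3)$), Birkhoff's ergodic theorem gives componentwise
$$\frac{1}{n}\sum_{k=1}^{n}Y_{k+h}^{(\Delta)}Y_k^{(\Delta)\top}\xrightarrow{a.s.}\E\bigl[Y_{1+h}^{(\Delta)}Y_1^{(\Delta)\top}\bigr]=\Gamma^{(\Delta)}(h).$$
Since $\overline\Gamma_n^{(\Delta)}(h)$ differs from this average by at most $h$ summands, each of which has bounded second moment, the omitted contribution is $O(h/n)=o(1)$ almost surely, so the same limit holds for $\overline\Gamma_n^{(\Delta)}(h)$.

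The main obstacle is justifying ergodicity of $Y^{(\Delta)}$ cleanly; I would either cite the standard fact that a measurable function of an i.i.d.\ sequence defines an ergodic stationary process, or refer to ergodicity of the underlying Lévy-driven state space solution established in earlier work (e.g.\ the arguments cited from \cite{QMLE}). Everything else is routine given the exponential decay provided by $(A3)$.
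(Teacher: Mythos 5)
Your proof is correct and takes essentially the same route as the paper: ergodicity of $Y^{(\Delta)}$ combined with Birkhoff's ergodic theorem for the almost sure convergence, and exponential decay of the autocovariances coming from the eigenvalue condition in $(A3)$ for the absolute summability. The only (cosmetic) difference is in how ergodicity is justified: you obtain it directly for the sampled process as a Bernoulli shift of the i.i.d.\ sequence $(N_k^{(\Delta)})_{k\in\Z}$ through the absolutely summable MA$(\infty)$ filter, whereas the paper cites ergodicity of the continuous-time process (Proposition 3.34 of Marquardt--Stelzer) and then passes to the sampled process via Krengel's Theorem 4.3 --- both are standard and valid.
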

\begin{proof}
	Due to Proposition 3.34 of \cite{Marquardt:Stelzer:2007} the process $Y$ is ergodic. Therefore,  Theorem 4.3 of \cite{Krengel} implies that the sampled process $Y^{(\Delta)}$ is ergodic as well. Moreover, $\Gamma_Y(h)=C^\top e^{Ah}\Sigma_N^{(\Delta)}C$ due to \cite{Marquardt:Stelzer:2007}.
	Since the eigenvalues of $A$ have strictly negative real parts $$\sum_{h\in \Z}\|\Gamma^\Delta(h)\|=\sum_{h\in \Z}\|\Gamma_Y(\Delta h )\|< \infty.$$
	Birkhoff`s Ergodic Theorem now leads to $$\overline\Gamma_n^{(\Delta)}(h)\overset{a.s.}{\longrightarrow}\E\left[Y^{(\Delta)}_{h}Y^{(\Delta)\top}_{0}\right]=\Gamma^\Delta(h).$$
~\hfill$\Box$
\end{proof}

\begin{remark}\label{Gamma_N}
	Similarly, one can show that in the situation of \Cref{ewertmpAC} the sample autocovariance function of $N\D$ as introduced in \Cref{invertible} behaves in the same way, i.e. $$\overline \Gamma_{n,N}(h)\overset{a.s.}{\longrightarrow} \Gamma_{N}(h) \quad \forall\ h \in \Z.$$
	Obviously, $\Gamma_N(h)=0$ for $h\neq 0$ and $\Gamma_N(0)=\Sigma_N^{(\Delta)}$.
\end{remark}
Under the stronger assumption of an i.i.d. white noise, the sample autocovariance function has an asymptotic normal distribution.
\begin{lemma}\label{asyNGamma}
	Let $(Z_k)_{k\in \N}$ be an $N$-dimensional i.i.d. white noise with $\E\|Z_1\|^4 <\infty$ and covariance matrix $\Sigma_Z$. Define $$\overline \Gamma_{n,Z}(h)=\frac{1}{n}\sum_{j=1}^{n-h} Z_{j+h}Z_{j}^\top,\quad n\geq h\geq 0,$$ Then, for fixed $\ell \in \N$,
	$$\sqrt n \left(\left[ \begin{array}{c}\vecc\left(\overline \Gamma_{n,Z}(0)\right)\\ \vecc\left(\overline \Gamma_{n,Z}(1)\right)\\ \vdots \\\vecc\left( \overline \Gamma_{n,Z}(\ell)\right)\end{array} \right]-\left[ \begin{array}{c}\vecc\left(\Sigma_{Z}\right)\\ 0\\\vdots \\  0 \end{array} \right]\right)\overset{\mathcal D}{\longrightarrow} \mathcal{N}(0,\Sigma_{\Gamma_Z}(\ell)),$$ where
	$$\Sigma_{\Gamma_Z}(\ell)=\left( \begin{array}{c| c c c}\E[Z_1Z_1^\top \otimes Z_1^\top Z_1]-\Sigma_Z\otimes \Sigma_Z & &0_{N^2\times\ell N^2}& \\\hline &&&\\0_{\ell N^2\times N^2}& & I_\ell \otimes\Sigma_Z\otimes \Sigma_Z& \\&&& \end{array} \right).$$
\end{lemma}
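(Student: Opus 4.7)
The plan is to express each sample autocovariance as a time-average of a centered, stationary, $\ell$-dependent random sequence, and then to obtain the joint asymptotic normality via the Cramér--Wold device combined with the Hoeffding--Robbins central limit theorem for $\ell$-dependent stationary sequences.

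First, I would set $V_j^{(0)} := Z_j Z_j^\top - \Sigma_Z$ and $V_j^{(h)} := Z_{j+h} Z_j^\top$ for $1 \le h \le \ell$, so that
\begin{align*}
\overline\Gamma_{n,Z}(0) - \Sigma_Z = \frac1n\sum_{j=1}^n V_j^{(0)}, \qquad \overline\Gamma_{n,Z}(h) = \frac1n\sum_{j=1}^{n-h} V_j^{(h)}, \quad 1\le h\le\ell.
\end{align*}
Each $V_j^{(h)}$ has mean zero (directly by centering for $h=0$, and by independence of $Z_{j+h}$ and $Z_j$ together with $\E[Z_{j+h}]=0$ for $h\ge 1$). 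Bundling these into $W_j := (\vecc(V_j^{(0)})^\top,\vecc(V_j^{(1)})^\top,\ldots,\vecc(V_j^{(\ell)})^\top)^\top$ yields a stationary sequence which is a measurable function of $(Z_j,\ldots,Z_{j+\ell})$, hence $\ell$-dependent, and $\E\|W_j\|^2<\infty$ follows from $\E\|Z_1\|^4<\infty$.

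Fixing an arbitrary $\lambda\in\R^{(\ell+1)N^2}$ and writing
\begin{align*}
T_n(\lambda) := \sqrt n\,\lambda^\top\!\left[\begin{array}{c}\vecc(\overline\Gamma_{n,Z}(0)-\Sigma_Z)\\ \vecc(\overline\Gamma_{n,Z}(1))\\ \vdots \\ \vecc(\overline\Gamma_{n,Z}(\ell))\end{array}\right] = \frac{1}{\sqrt n}\sum_{j=1}^{n}\lambda^\top W_j + R_n,
\end{align*}
the remainder $R_n$ collects the $O(\ell)$ missing boundary indices and is $o_\P(1)$. The Hoeffding--Robbins theorem then gives $T_n(\lambda)\overset{\mathcal D}{\longrightarrow}\mathcal N(0,\sigma_\lambda^2)$ with $\sigma_\lambda^2 = \sum_{|u|\le\ell}\Cov(\lambda^\top W_0,\lambda^\top W_u)$, and Cramér--Wold upgrades this to joint asymptotic normality of the stacked vector on the left-hand side.

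It remains to identify the limit covariance. Using $\vecc(Z_{j+h}Z_j^\top) = Z_j\otimes Z_{j+h}$ and performing a case analysis over index coincidences, independence and zero means imply: (i) for $0\le h<h'\le\ell$ all cross-lag covariances $\Cov(\vecc(V_j^{(h)}),\vecc(V_k^{(h')}))$ vanish, since in every coincidence pattern at least one $Z_m$ appears with multiplicity one; (ii) for each fixed $h\ge 1$, only the lag $u=0$ contributes, with $\Var(\vecc(V_j^{(h)})) = \Sigma_Z\otimes\Sigma_Z$ by independence of $Z_j$ and $Z_{j+h}$; (iii) for $h=0$ the $V_j^{(0)}$ are i.i.d., and the $(0,0)$ block is the plain multivariate CLT variance of $\vecc(Z_1 Z_1^\top)$. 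Assembling these blocks yields the stated $\Sigma_{\Gamma_Z}(\ell)$. The main obstacle I anticipate is the bookkeeping in step (i) for the within-lag case with indices $1\le |j-k|\le h$ (where some $Z_m$ is shared between the two products): a direct inspection of the fourfold product $\E[Z_a Z_b Z_c Z_d]$ together with the zero-mean property confirms that each such contribution still vanishes, after which the block-diagonal form drops out and the proof is complete.
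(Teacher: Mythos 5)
The paper does not actually prove this lemma: its ``proof'' is a one-line remark that the argument is similar to Proposition 4.4 of L\"utkepohl and is omitted. You are therefore supplying an argument the authors only cite. Your route --- writing each $\overline\Gamma_{n,Z}(h)$ as a normalized sum of the centered, stationary, $\ell$-dependent blocks $W_j$, discarding the $O(\ell)$ boundary terms, and combining the Cram\'er--Wold device with a CLT for $m$-dependent stationary sequences --- is the standard way this result is established and is, in substance, what the cited reference does; your case analysis of which index coincidences survive the zero-mean/independence factorization correctly yields the block-diagonal structure (only $u=0$ contributes within each lag, all cross-lag blocks vanish, and the $(h,h)$ blocks for $h\ge 1$ equal $\Sigma_Z\otimes\Sigma_Z$). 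Two caveats. First, quote the $m$-dependent CLT in its stationary, finite-second-moment form (e.g.\ Theorem 6.4.2 of Brockwell and Davis): the original Hoeffding--Robbins theorem assumes third moments, which for the quadratic functionals $\lambda^\top W_j$ would require $\E\|Z_1\|^6<\infty$, not available under the hypotheses. Second, your (correct) $(0,0)$ block is $\Var\bigl(\vecc(Z_1Z_1^\top)\bigr)=\E[Z_1Z_1^\top\otimes Z_1Z_1^\top]-\vecc(\Sigma_Z)\vecc(\Sigma_Z)^\top$, which is not literally the printed block $\E[Z_1Z_1^\top\otimes Z_1^\top Z_1]-\Sigma_Z\otimes\Sigma_Z$ (note that $Z_1^\top Z_1$ is a scalar, so the printed block is not even $N^2\times N^2$ as written, and $\vecc(\Sigma_Z)\vecc(\Sigma_Z)^\top\neq\Sigma_Z\otimes\Sigma_Z$ in general); you should flag this discrepancy with the statement rather than assert that your assembly ``yields the stated'' matrix.
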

\begin{proof}
	The proof is similar to the proof of Proposition 4.4 in \cite{Lutke} and is therefore omitted.~\hfill$\Box$
\end{proof}
\subsection{Convergence rate of the integral approximation}
To prove the uniform convergence of the Whittle function, it is necessary to guarantee that the deterministic part of the Whittle function converges uniformly.
\begin{lemma}
	\label{unifdet}
	Let $\Theta$ be a compact parameter space and let $g:[-\pi,\pi]\times \Theta \to \C$ be differentiable in the first component. Assume further that $\frac{\partial}{\partial \omega} g(\omega,\vartheta)$ is continuous on $ [-\pi,\pi]\times \Theta$. Then,
	$$\sup_{\vartheta \in \Theta}\left|\frac{1}{2n}\sum_{j=-n+1}^{n}g(\omega_j,\vartheta)-\frac 1 {2\pi}\int_{-\pi}^{\pi}g(\omega,\vartheta)d\omega\right|\overset{n\to \infty}{\longrightarrow}0.$$
\end{lemma}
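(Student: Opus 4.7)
The plan is to recognize the sum as a right-endpoint Riemann sum on $[-\pi,\pi]$ with uniform mesh $\pi/n$, and to control the approximation error by exploiting the smoothness of $g$ in its first argument together with the compactness of $[-\pi,\pi]\times\Theta$. First I would note that since the frequencies $\omega_j=\pi j/n$ for $j=-n+1,\dots,n$ partition $[-\pi,\pi]$ into $2n$ subintervals of equal length $\pi/n$ (with the convention $\omega_{-n}=-\pi$), one can write
$$\frac{1}{2n}\sum_{j=-n+1}^{n}g(\omega_j,\vartheta)-\frac{1}{2\pi}\int_{-\pi}^{\pi}g(\omega,\vartheta)\,d\omega=\frac{1}{2\pi}\sum_{j=-n+1}^{n}\int_{\omega_{j-1}}^{\omega_j}\bigl[g(\omega_j,\vartheta)-g(\omega,\vartheta)\bigr]d\omega.$$

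Next, by the assumed continuity of $\partial_\omega g$ on the compact set $[-\pi,\pi]\times\Theta$, the quantity
$$M:=\sup_{(\omega,\vartheta)\in[-\pi,\pi]\times\Theta}\left|\tfrac{\partial}{\partial \omega}g(\omega,\vartheta)\right|$$
is finite and, crucially, independent of $\vartheta$. Applying the mean value theorem (to real and imaginary parts of $g$) on each subinterval gives $|g(\omega_j,\vartheta)-g(\omega,\vartheta)|\leq M|\omega_j-\omega|$ for all $\omega\in[\omega_{j-1},\omega_j]$ and every $\vartheta\in\Theta$. Consequently
$$\left|\int_{\omega_{j-1}}^{\omega_j}\bigl[g(\omega_j,\vartheta)-g(\omega,\vartheta)\bigr]d\omega\right|\leq M\int_{\omega_{j-1}}^{\omega_j}(\omega_j-\omega)\,d\omega=\frac{M\pi^2}{2n^2},$$
and summing over the $2n$ subintervals yields the uniform bound
$$\sup_{\vartheta\in\Theta}\left|\frac{1}{2n}\sum_{j=-n+1}^{n}g(\omega_j,\vartheta)-\frac{1}{2\pi}\int_{-\pi}^{\pi}g(\omega,\vartheta)\,d\omega\right|\leq\frac{1}{2\pi}\cdot 2n\cdot\frac{M\pi^2}{2n^2}=\frac{M\pi}{2n}\xrightarrow{n\to\infty}0.$$

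There is no real obstacle here: the entire argument reduces to the standard $O(1/n)$ error estimate for Riemann sums of a $C^1$ function, and the uniformity in $\vartheta$ is handed to us for free by compactness of $\Theta$ and joint continuity of $\partial_\omega g$, which together make the Lipschitz constant $M$ a single finite number that absorbs the $\vartheta$-dependence. The only minor point worth noting is the complex-valuedness of $g$, which is handled by applying the mean value theorem componentwise or equivalently by the identity $g(\omega_j,\vartheta)-g(\omega,\vartheta)=\int_\omega^{\omega_j}\partial_\omega g(s,\vartheta)\,ds$ together with the trivial bound $|\int_\omega^{\omega_j}\partial_\omega g(s,\vartheta)\,ds|\leq M|\omega_j-\omega|$.
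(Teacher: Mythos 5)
Your proof is correct and is exactly the argument the paper compresses into its one-line proof (``Follows by an application of the mean value theorem''): a Riemann-sum error estimate with the Lipschitz constant $M=\sup|\partial_\omega g|$ made uniform in $\vartheta$ by compactness and joint continuity. The explicit $O(1/n)$ bound and the remark on handling the complex-valuedness of $g$ are both fine.
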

\begin{proof}
	Follows by an application of the mean value theorem.~\hfill$\Box$
\end{proof}

\begin{lemma}\label{Veit}
	Let $g:[-\pi,\pi] \to \C$ be continuously differentiable. Then,
	$$ \frac{1}{ \sqrt n}\sum_{j=-n+1}^{n}g(\omega_j)-\frac{\sqrt n}{\pi} \int_{-\pi}^{\pi} g(\omega)d\omega\overset{n\to \infty}{\longrightarrow}0$$ holds.
\end{lemma}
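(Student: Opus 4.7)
The plan is to recognize the difference as $\sqrt{n}$ times the error in a Riemann sum approximation and to use the $C^1$ regularity to show that this Riemann error is of order $1/n$, so that after multiplication by $\sqrt{n}$ it is of order $1/\sqrt{n}$ and vanishes.

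First I would note that $\omega_j = \pi j/n$, $j=-n+1,\ldots,n$, form a uniform grid on $[-\pi,\pi]$ with spacing $\pi/n$ and $\omega_{-n}=-\pi$, so the values $g(\omega_j)$ are the right-endpoint values on the subintervals $[\omega_{j-1},\omega_j]$. Consequently
$$\frac{\pi}{n}\sum_{j=-n+1}^{n}g(\omega_j)\;-\;\int_{-\pi}^{\pi}g(\omega)\,d\omega \;=\;\sum_{j=-n+1}^{n}\int_{\omega_{j-1}}^{\omega_j}\bigl[g(\omega_j)-g(\omega)\bigr]\,d\omega.$$
Because $g$ is continuously differentiable on the compact set $[-\pi,\pi]$, the quantity $M:=\sup_{\omega\in[-\pi,\pi]}|g'(\omega)|$ is finite, and the mean value inequality yields $|g(\omega_j)-g(\omega)|\le M|\omega-\omega_j|\le M\pi/n$ for every $\omega\in[\omega_{j-1},\omega_j]$.

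Estimating each of the $2n$ summands by $(M\pi/n)\cdot(\pi/n)$ I would obtain
$$\left|\frac{\pi}{n}\sum_{j=-n+1}^{n}g(\omega_j)-\int_{-\pi}^{\pi}g(\omega)\,d\omega\right|\;\le\;2n\cdot\frac{M\pi^2}{n^2}\;=\;\frac{2M\pi^2}{n}.$$
Dividing by $\pi$ and multiplying by $\sqrt{n}$ this becomes
$$\left|\frac{1}{\sqrt n}\sum_{j=-n+1}^{n}g(\omega_j)-\frac{\sqrt n}{\pi}\int_{-\pi}^{\pi}g(\omega)\,d\omega\right|\;\le\;\frac{2M\pi}{\sqrt n}\;\xrightarrow{n\to\infty}\;0,$$
which is the claimed convergence.

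There is no serious obstacle here; the only point to be careful about is that the simple uniform continuity of $g$ would give only an $o(1)$ Riemann error, whereas the $\sqrt n$ pre-factor forces one to use the $C^1$ assumption in order to obtain the quantitative $O(1/n)$ rate.
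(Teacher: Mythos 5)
Your proof is correct and is exactly the argument the paper intends: the paper's own proof is a one-line remark that the claim follows from the definition of the Riemann integral and the continuous differentiability of $g$, and your write-up simply makes explicit the $O(1/n)$ Riemann-sum error bound via the mean value inequality, which survives multiplication by $\sqrt{n}/\pi$.
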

\begin{proof}
	The lemma is a consequence of the definition of the Riemann integral and the continuously differentiability of $g$.~\hfill$\Box$
\end{proof}

\section{Extended simulation study} \label{se:further simulation}

In addition to the simulation study of \Cref{sec:simulation}, we investigate bivariate MCAR(1) processes and  CAR(3) processes for both the Brownian motion and the NIG 
driven setting. The parametrization of the MCAR(1) model is given in  Table~1 of \cite{QMLE}
and it is
$$A(\vartheta)=\left( \begin{array}{ c c}\vartheta_1 & \vartheta_2 \\ \vartheta_3 & \vartheta_4 \end{array}\right) = B(\vartheta),\quad C(\vartheta)=\left( \begin{array}{ c c}1 & 0 \\ 0 & 1 \end{array}\right), \quad \Sigma_{L}(\vartheta)=\left( \begin{array}{ c c}\vartheta_5 & \vartheta_6 \\ \vartheta_6 & \vartheta_7 \end{array}\right)$$
in which we choose the parameter $$\vartheta_0^{(3)}=(1,-2,3,-4,0.7513,-0.3536, 0.3536).$$
The results of this simulation study are summarized in \Cref{table_5} and \Cref{table_6}, respectively. Likewise, as for the MCARMA(2,1) model in ~\Cref{table_1} and \Cref{table_2} of \Cref{sec:simulation},
the Whittle estimator and the QMLE  converge very fast.
Furthermore, we use the parameter
\begin{align*}
\vartheta^{(4)}_0=(-0.01,0,7,-1,0.7513,-0.3536,0.3536)
\end{align*}
in this model class. One eigenvalue of $A(\vartheta^{(4)}_0)$ is close to zero. An eigenvalue equal to zero results in a
non-stationary MCARMA process.
\Cref{table_7} shows the results for this setting for $n_2=2000$, and both the Brownian and the NIG driven model.
The Whittle estimator and the QMLE estimate the parameters very well. But it is striking that the bias of several
parameters of the QMLE even vanish.

\begin{table}	
	\begin{center}
		\begin{tabular}{|c||c|c|c||c|c|c|}\hline
			\multicolumn{7}{|c|}{$n_1=500$} \\\hline
			&  \multicolumn{3}{|c||}{Whittle} & \multicolumn{3}{|c|}{QMLE}\\ \hline
			\hspace*{0.1cm} $\vartheta_0$\hspace*{0.1cm} &mean  & bias & std.& mean  & bias &std. \\ \hline
			1&1.0018&0.0018&0.0301& 1.0045& 0.0045&0.0362 \\
			-2&-2.0063&0.0063&0.0321& -2.0068&0.0068& 0.0357 \\
			3&2.9966& 0.0034& 0.0399& 3.0055& 0.0055& 0.0604\\
			-4&-3.9980&0.0020& 0.0399& -4.0019& 0.0019& 0.0565\\ \hline
			0.7513 &0.7543& 0.0030&0.0516 &0.7522& 0.0009& 0.0923\\
			-0.3536 &-0.3573& 0.0037& 0.0463& -0.3531& 0.0005& 0.0674\\
			0.3536 &0.3685& 0.0149& 0.0510& 0.3704& 0.0168& 0.0714\\ \hline \hline
			\multicolumn{7}{|c|}{$n_2=2000$} \\\hline
			&  \multicolumn{3}{|c||}{Whittle} & \multicolumn{3}{|c|}{QMLE}\\ \hline
			\hspace*{0.1cm} $\vartheta_0$\hspace*{0.1cm} &mean  & bias & std.& mean  & bias &std. \\ \hline
			1& 1.0035& 0.0035& 0.0150& 1.0039& 0.0039& 0.0181\\
			-2& -2.0067& 0.0067& 0.0165& -2.0066& 0.0066& 0.0192\\
			3& 2.9991& 0.0009& 0.0192& 3.0021& 0.0021& 0.0286\\
			-4& -3.9987& 0.0013& 0.0223& -4.0003& 0.0003& 0.0302\\ \hline
			0.7513 &0.7532& 0.0019& 0.0257& 0.7514& 0.0001& 0.0401 \\
			-0.3536 & -0.3603& 0.0067&0.0248&-0.3574& 0.0038& 0.0352 \\
			0.3536 & 0.3675& 0.0139& 0.0280&0.3706& 0.0170& 0.0376\\ \hline\hline
			\multicolumn{7}{|c|}{$n_3=5000$} \\\hline
			&  \multicolumn{3}{|c||}{Whittle} & \multicolumn{3}{|c|}{QMLE}\\ \hline
			\hspace*{0.1cm} $\vartheta_0$\hspace*{0.1cm} &mean  & bias & std.& mean  & bias &std. \\ \hline	
			1& 1.0042& 0.0042& 0.0101& 1.0050& 0.0050& 0.0117\\
			-2& -2.0062& 0.0062& 0.0106& -2.0074& 0.0074& 0.0111\\
			3& -2.9996& 0.0004& 0.0114& 3.0021& 0.0021& 0.0169\\
			-4&-3.9965& 0.0035& 0.0158& -4.0013&0.0013& 0.0196 \\ \hline
			0.7513 &0.7537& 0.0024& 0.0173 &0.7549& 0.0036& 0.0258 \\
			-0.3536 &-0.3596& 0.0060& 0.0166& -0.3559& 0.0023& 0.0201  \\
			0.3536 & 0.3663& 0.0027& 0.0169& 0.3693& 0.0157&0.0200\\ \hline 		
		\end{tabular}
	\end{center}
	\caption{\label{table_5} 	Estimation results for a  Brownian motion driven bivariate MCAR(1) process with parameter $\vartheta_0^{(3)}$. }
\end{table}

\begin{table}
	\begin{center}
		\begin{tabular}{|c||c|c|c||c|c|c|}\hline
			\multicolumn{7}{|c|}{$n_1=500$} \\\hline
			&  \multicolumn{3}{|c||}{Whittle} & \multicolumn{3}{|c|}{QMLE}\\ \hline
			\hspace*{0.1cm} $\vartheta_0$\hspace*{0.1cm} &mean  & bias & std. & mean  & bias &std.  \\ \hline
			1&0.9905& 0.0095& 0.0407&0.9806& 0.0194& 0.0460\\
			-2& -1.9871& 0.0129& 0.0531& -2.0038& 0.0038& 0.0579\\
			3&2.9920& 0.0080& 0.0579& 2.9240& 0.0760& 0.0842\\
			-4&-3.9409& 0.0591& 0.1027& -3.9918& 0.0082& 0.0894\\ \hline
			0.7513 &0.7281& 0.0232& 0.1869& 0.7125& 0.0388& 0.0568 \\
			-0.3536 & -0.3366& 0.0170& 0.0302& -0.3251& 0.0285& 0.0497  \\
			0.3536 &0.3381& 0.0155& 0.0335& 0.3182& 0.0354& 0.0486\\ \hline \hline
			\multicolumn{7}{|c|}{$n_2=2000$} \\\hline
			&  \multicolumn{3}{|c||}{Whittle} & \multicolumn{3}{|c|}{QMLE}\\ \hline
			\hspace*{0.1cm} $\vartheta_0$\hspace*{0.1cm} &mean  & bias & std. & mean  & bias &std.  \\ \hline
			1&0.9916& 0.0084& 0.0261& 0.9839& 0.0161& 0.0316\\
			-2&-1.9892& 0.0110& 0.0321&-2.0072& 0.0072& 0.0320 \\
			3&2.9797& 0.0203& 0.0416& 2.9377& 0.0623& 0.0576\\
			-4&-3.9700& 0.0300& 0.0767& -4.0051& 0.0051& 0.0561\\ \hline
			0.7513 &0.7489& 0.0024& 0.1392& 0.7210& 0.0303& 0.0351 \\
			-0.3536 & -0.3603& 0.0067& 0.0241& -0.3224& 0.0312& 0.0312  \\
			0.3536 & 0.3417& 0.0119& 0.0224& 0.3352& 0.0184& 0.0300 \\ \hline \hline
			\multicolumn{7}{|c|}{$n_3=5000$} \\\hline
			&  \multicolumn{3}{|c||}{Whittle} & \multicolumn{3}{|c|}{QMLE}\\ \hline
			\hspace*{0.1cm} $\vartheta_0$\hspace*{0.1cm} &mean  & bias & std. & mean  & bias &std.  \\ \hline
			1& 0.9952& 0.0048& 0.0186& 0.9810& 0.0190& 0.0240\\
			-2& -1.9890& 0.0110& 0.0253& -2.0086& 0.0086& 0.0289\\
			3& 2.9789& 0.0211& 0.0365& 2.9341& 0.0659& 0.0478\\
			-4&-3.9849& 0.0151& 0.0611&-4.0064& 0.0064& 0.0516\\ \hline
			0.7513 &0.7500& 0.0013& 0.0749& 0.6912& 0.0601& 0.0428 \\
			-0.3536 & -0.3600& 0.0064& 0.0148& -0.3412& 0.0124& 0.0237  \\
			0.3536 &0.3499& 0.0037& 0.0201& 0.3208& 0.0328& 0.0238 \\ \hline 			
		\end{tabular}
	\end{center}
	\caption{\label{table_6} 	Estimation results for a  NIG driven bivariate MCAR(1) process  with parameter $\vartheta_0^{(3)}$.}
\end{table}

\begin{table}	
	\begin{center}
		\begin{tabular}{|c||c|c|c||c|c|c|}\hline
			\multicolumn{7}{|c|}{Brownian motion driven,\ $n_2=2000$} \\\hline
			&  \multicolumn{3}{|c||}{Whittle} & \multicolumn{3}{|c|}{QMLE}\\ \hline
			\hspace*{0.1cm} $\vartheta_0$\hspace*{0.1cm} &mean & bias& std.& mean & bias &std.  \\ \hline
			-0.01 &-0.0099& 0.0001&0.0005& -0.0103&0.0003&0 \\
			0 &0&0&0&0&0&0.1891 \\
			7 &6.9245&0.0755&0.0853&7&0&0.0012 \\
			-1 &-1.0442&0.0442&0.1915&-1&0&0.0019\\ \hline
			0.7513 &0.8574&0.1061&0.2193&0.7513&0&0.0031  \\
			-0.3536 &-0.3492&0.0044&0.0587&-0.3535&0.0001&0.0013  \\
			0.3536 &0.7958&0.4422&0.4160&0.3536 &0&0.0005 \\ \hline\hline
			\multicolumn{7}{|c|}{NIG driven,\ $n_2=2000$} \\\hline
			&  \multicolumn{3}{|c||}{Whittle} & \multicolumn{3}{|c|}{QMLE}\\ \hline
			\hspace*{0.1cm} $\vartheta_0$\hspace*{0.1cm} &mean  & bias& std.& mean  & bias &std.  \\ \hline
			-0.01 &-0.0125& 0.0025& 0.0534& -0.099& 0.0001&0.0001 \\
			0 &-0.0084& 0.0084& 0.0507& 0& 0& 0.1805 \\
			7 &7.0137& 0.0137& 0.1081& 7& 0&0.0180 \\
			-1 & -0.8731& 0.1269&0.1354& -1 & 0&0.0049\\ \hline
			0.7513 &1.4557&0.7045&0.0959 &0.7513&0&0.0027  \\
			-0.3536 &0.1189& 0.4724& 0.1675&-0.3536&0&0.0017 \\
			0.3536 &0.7397& 0.3862&0.0524& 0.3535&0.0001&0.0008 \\ \hline
		\end{tabular}
	\end{center}
	\label{table5}
	\begin{center}
	 \caption{ \label{table_7} 	Estimation results for a bivariate MCAR(1) process with parameter $\vartheta_0^{(4)}$ close to the non-stationary case. }
	\end{center}
\end{table}

For the univariate CAR(3) processes with parametrization
\begin{align*}
A(\vartheta)&=\left( \begin{array}{c c c}0 & 1 &0 \\ 0&0&1 \\ \vartheta_1 & \vartheta_2 &\vartheta_3\end{array}\right),\quad \quad B(\vartheta)=\left( \begin{array}{c}0\\0\\\vartheta_1 \end{array}\right),\quad C(\vartheta)=(1 \ 0 \ 0).
\end{align*}
and $$\vartheta_0^{(5)}=(-6, -11, -6),$$
we choose once again the Brownian motion and the NIG Lévy process as driving processes.
The results are documented in \Cref{table_8} and \Cref{table_9}. They correspond to the results
of ~\Cref{table_3} and \Cref{table_4}, respectively for CARMA(2,1) processes.

\begin{table}	
	\begin{center}
		\begin{tabular}{|c||c|c|c||c|c|c||c|c|c|}\hline
			\multicolumn{10}{|c|}{$n_1=500$} \\\hline
			&  \multicolumn{3}{|c||}{Whittle} &\multicolumn{3}{|c||}{adjusted Whittle}& \multicolumn{3}{|c|}{QMLE}\\ \hline
			\ $\vartheta_0$ \ &mean& bias& std.& mean & bias&std.& mean & bias&std.\\ \hline
			-6 &-5.9230&0.0770&0.2074&-6.2266&0.2266&0.6347&-6.4357&0.4357&1.3266\\
			-11 &-10.8390&0.1610&0.4119&-11.2759&0.2759&0.9351& -11.6067& 0.6067& 1.6706\\
			-6 &-5.8267&0.1733&0.3585&-6.0575&0.0575&0.4800&-6.3039&0.3039&1.2821 \\  \hline\hline
			\multicolumn{10}{|c|}{$n_2=2000$} \\\hline			&  \multicolumn{3}{|c||}{Whittle} &\multicolumn{3}{|c||}{adjusted Whittle}& \multicolumn{3}{|c|}{QMLE}\\ \hline
			\ $\vartheta_0$ \ &mean & bias& std.& mean  & bias&std. &mean& bias&std. \\ \hline
			-6 & -5.9886&0.0114&0.1117&-6.0410&0.0410&0.2391&-6.0549&0.0549& 0.4510\\
			-11 & -10.9336&0.0664&0.2372&-11.0680&0.0680&0.4126&-11.0422& 0.0422&0.6005\\
			-6 & -5.8855& 0.1145&0.1755&-5.9460&0.0540&0.1924& -5.9542&0.0458& 0.4464\\ \hline\hline
			\multicolumn{10}{|c|}{$n_3=5000$} \\\hline			&  \multicolumn{3}{|c||}{Whittle} &\multicolumn{3}{|c||}{adjusted Whittle}& \multicolumn{3}{|c|}{QMLE}\\ \hline
			\ $\vartheta_0$\  &mean & bias& std.&mean & bias&std.& mean & bias &std.\\ \hline	
			-6 & -5.9856&0.0144&0.0884&-6.0455&0.0455&0.1444& -5.9861& 0.0139& 0.1120\\
			-11 & -10.9335&0.0665&0.1471&-11.0349&0.0349&0.1298& -10.9259& 0.0741& 0.1877\\
			-6 & -5.9123&0.0877&0.1262&-5.9303&0.0697&0.1104& -5.8937& 0.1063& 0.1406 \\ \hline
			
		\end{tabular}
	\end{center}
	\begin{center}
		\caption{\label{table_8} 	Estimation results for a  Brownian motion driven CAR(3) process with $\vartheta_0^{(5)}$.}
	\end{center}
\end{table}

\begin{table}
	\begin{center}
		\begin{tabular}{|c||c|c|c||c|c|c||c|c|c|}\hline
			\multicolumn{10}{|c|}{$n_1=500$} \\\hline
			&  \multicolumn{3}{|c||}{Whittle} &\multicolumn{3}{|c||}{adjusted Whittle}& \multicolumn{3}{|c|}{QMLE}\\ \hline
			\hspace*{0.1cm} $\vartheta_0$\hspace*{0.1cm} &mean & bias& std. & mean   & bias  &std.  &mean & bias&std.  \\ \hline
			-6 & -5.9449&0.0551&0.4322&-5.9238&0.0762&0.4799&-6.8247& 0.8247& 1.9413\\
			-11 &-10.9222&0.0778&0.5765&-10.9049&0.0951&0.6813& -12.1860& 1.1860& 2.3377\\
			-6 &-5.8492&0.1508&0.3455&-5.8000&0.2000&0.4239& -6.6137& 0.6137& 1.6559 \\  \hline\hline
			\multicolumn{10}{|c|}{$n_2=2000$} \\\hline
			&  \multicolumn{3}{|c||}{Whittle} &\multicolumn{3}{|c||}{adjusted Whittle}& \multicolumn{3}{|c|}{QMLE}\\ \hline
			\hspace*{0.1cm} $\vartheta_0$\hspace*{0.1cm} &mean & bias & std.& mean   & bias  &std.  & mean & bias&std. \\ \hline
			-6 & -5.9611&0.0389&0.1287&-6.0737&0.0737&0.3438&-6.01035& 0.1035& 0.6401 \\
			-11 &-10.9011&0.0989&0.2590&-11.0504&0.0504&0.4832 &-11.1053& 0.1053& 0.8271\\
			-6 &-5.8879&0.1121&0.1988&-5.9692&0.0308&0.2175& -6.0036& 0.0036& 0.5522 \\  \hline\hline
			\multicolumn{10}{|c|}{$n_3=5000$} \\\hline
			&  \multicolumn{3}{|c||}{Whittle} &\multicolumn{3}{|c||}{adjusted Whittle}& \multicolumn{3}{|c|}{QMLE}\\ \hline
			\hspace*{0.1cm} $\vartheta_0$\hspace*{0.1cm} &mean  & bias& std. & mean   & bias  &std.  & mean & bias &std.  \\ \hline
			-6 &-6.0313&0.0313&0.0825&-6.0622&0.0622&0.1883& -6.0087& 0.0087& 0.2748 \\
			-11 & -10.8882&0.1118&0.1274&-11.0345&0.0345&0.1490& -10.9541& 0.0459& 0.3830\\
			-6 & -5.9110&0.0190&0.0885&-5.8438&0.1562&0.2144& -5.9164& 0.0836& 0.2513\\  \hline
			
		\end{tabular}
	\end{center}
	\label{table7}
	\begin{center}
		\caption{\label{table_9}	Estimation results for a  NIG driven CAR(3) process with parameter $\vartheta_0^{(5)}$. }
	\end{center}
\end{table}



\end{document}